\numberwithin{equation}{subsection}
\theoremstyle{plain}
\newtheorem*{lem}{Lemma}
\newtheorem*{prop}{Proposition}
\newtheorem*{thm}{Theorem}
\newtheorem*{murthm}{A semistandard basis theorem}
\newtheorem*{cor}{Corollary}
\newtheorem*{lem*}{Lemma}
\newtheorem*{Acknowledgements*}{Acknowledgements}
\newtheorem*{prop*}{Proposition}
\newtheorem*{thm*}{Theorem}
\newtheorem*{cor*}{Corollary}
\newtheorem*{conj*}{Conjecture}
\theoremstyle{remark}
\newtheorem*{rmk}{Remark}
\newtheorem*{Remark on the proof}{Remark on the proof}
\newtheorem{remk}{Remark}
\newtheorem*{defn}{Definition}
\newtheorem*{eg}{Example}
\newcommand{\Ext}{\operatorname{Ext}}
\newcommand{\Hom}{\operatorname{Hom}}
\newcommand{\Head}{\operatorname{head}}
\newcommand{\Stab}{\operatorname{Stab}}
\newcommand{\Span}{\operatorname{Span}}
\newcommand{\minl}{\operatorname{min}}
\newcommand{\End}{\operatorname{End}}
\newcommand{\Ind}{\operatorname{Ind}}
\newcommand{\Res}{\operatorname{Res}}
\newcommand{\GL}{\mathrm{GL}}
\newcommand{\T}{\mathrm{T}}
\renewcommand{\S}{\mathrm{S}}
\newcommand{\V}{\mathrm{V}}
\newcommand{\U}{\mathrm{U}}
\newcommand{\F}{\mathrm{F}}
\newcommand{\G}{\mathrm{G}}
\newcommand{\Std}{\rm{Std}}
\newcommand{\id}{\mathrm{id}}
\begin{document}

\title[Bases of quasi-hereditary covers of diagram algebras]{Bases of quasi-hereditary covers \\ of diagram algebras}
\author{C.~Bowman}
\address{Bowman@math.jussieu.fr}
\subjclass[2000]{20C30} 
\date{10th July 2012}

\begin{abstract} 
 We extend the the combinatorics of tableaux to the study of Brauer, walled Brauer, and partition algebras.
In particular, we provide uniform constructions of Murphy bases and `Specht' filtrations of permutation modules.
This allows us to give a uniform construction of semistandard bases of their quasi-hereditary covers.
  \end{abstract}
\maketitle
\section*{Introduction}
The Schur algebra was defined by  Green, \cite{Green}, as the setting in which to study the 
mutually centralising actions of the general linear and symmetric group on tensor space.  
Hemmer and Nakano, \cite{hemnak}, showed that when we restrict to the subcategories of cell-filtered modules $\Ext^1$-information can be passed through the Schur functor (for $p\neq 2,3$).

This was generalised in \cite{rouq}, where Rouquier introduced covers of finite dimensional algebras by highest weight categories and considered different levels of `faithfulness' in  cohomology. The simplest situation is that of a double centraliser theorem.    
 
 The Dipper--James--Mathas approach to the construction of a quasi-hereditary cover is to give a Murphy basis of `permutation modules' and to `lift' this to a cellular basis of the endomorphism algebra.  
   This is the approach used to construct the cyclotomic Schur algebras as covers of the Hecke algebras of type $G(m,1,r)$. 
    These algebras have   no underlying geometry and yet have all the symptoms of Lie theory:
a highest weight theory;
 Jantzen sum formulae; 
PBW bases;
  and are conjectured to be Koszul in the abelian defect case.  
This Lie theory apparently arises from the combinatorics of the underlying Hecke algebra.

 More generally, one can ask when and how can we directly construct a quasi-hereditary cover of a cellular algebra, and how much combinatorial Lie theory is baked-in to this construction. 
 In this paper we give a uniform approach (in the spirit of Dipper, James, and Mathas) to the construction of the quasi-hereditary covers of the Brauer, walled Brauer, and partition algebras.

Over fields of characteristic zero many classical diagram algebras are quasi-hereditary (i.e.\! our Schur functor induces a Morita equivalence).  This is no longer true for quantised diagram algebras at roots of unity, or classical diagram algebras (such as the Brauer, walled Brauer, and partition algebras) over fields of modular characteristic.
The principle aim of this paper is to construct a characteristic-free setting in which to study these diagram algebras through the language of Lie theory.

Since the first version of this paper was made available, the paper \cite{hko} has now appeared online, in which similar results  are obtained in the case of the Brauer algebra.

\section{Cellularly stratified diagram algebras}
We fix an algebraically close field, $K$, of characteristic $p\geq 0$.  We recall the known results concerning cellularly stratified algebras that we shall require for the remainder of the paper, see \cite{HHKP} for more details.

\subsection{Cellular algebras and iterated inflations}We recall the original definition of a cellular algebra given by Graham and Lehrer in \cite{GL}.

\begin{defn}\label{CELLDEF} 
An associative $K$-algebra $A$ is called a cellular algebra with cell datum $(\Lambda;T;C; i)$ if the following conditions are satisfied:

(C1) The finite set $\Lambda$ is partially ordered.  Associated with each $\lambda \in \Lambda$ there is a finite set $M(\lambda)$.  The algebra $A$ has $K$-basis $C^\lambda _{S,T}$ where $(S,T)$ runs through all elements of $M(\lambda)$ for all $\lambda \in \Lambda$.

(C2) The map $i$ is a $K$-linear anti-automorphism of $A$ with $i^2 = \id$ which sends
each $C^\lambda_{S,T}$ to $C^\lambda_{T,S}$.

(C3) For each $\lambda \in \Lambda$ and $S,T \in T(\lambda)$ and each $a \in A$ the product $aC^\lambda_{S,T}$ can be written as $(\sum_{U \in M(\lambda)}r_a(U,S)C^\lambda_{U,T}) + r'$ where $r'$ is a linear combination of basis elements with upper index strictly less than $\lambda$, and where the $r_a(U, S) \in K$ do not depend on $T$.
\end{defn}

In \cite{KX} it was shown  that every cellular algebra can be constructed as an iterated inflation of smaller cellular algebras. Let $A$ be an algebra which can be realised as an iterated inflation of cellular algebras $B_l$ along vector spaces $V_l$ for $l = 1, \ldots, n$.  As a vector space
$$A = \oplus^n_{l=0} V_l \otimes V_l \otimes B_l.$$   

An element of $A$ is called a \emph{diagram}, an element of $V_l$ is called a \emph{dangle}, and an element of $B_l$ is called a \emph{configuration of through lines}.  Any diagram is formed from $u$ a top dangle, $v$ a bottom dangle, and $b$ a configuration of through lines.
  There is a chain of two-sided ideals $A=J_0 \supseteq J_1 \supseteq \ldots \supseteq J_n=\{0\}$, which can be refined to a cell chain, and each subquotient $J_l / J_{l -1}$ equals $B_l \otimes V_l \otimes V_l$ as an algebra without unit. The anti-involution $i$ of A 
is defined through the anti-involutions $j_l$ of the cellular algebras $B_l$ as follows: $i(u\otimes v\otimes b)=  v \otimes u \otimes j_l(b)$
for any $b \in B_l$ and $u,v \in V_l$.  


Suppose that the \emph{input algebra} $B_l$ has cell modules $\{\mathcal{S}(\lambda) : \lambda\in\Lambda_{B_l}\}$, then $A$ has cell modules $\{\Delta(\lambda,l)=V_l \otimes \mathcal{S}(\lambda) : (l,\lambda) \in\Lambda_A\}$.

\subsection{Cellularly stratified algebras}\label{stratified}
The following definition first appears in \cite{HHKP}.  

\begin{defn}\label{cellDEF}
A finite dimensional associative algebra $A$ is called \emph{cellularly stratified}  
 if and only if the following conditions are satisfied:
\begin{enumerate}
\item The algebra $A$ is an iterated inflation of cellular algebras $B_l$ along vector spaces $V_l$ for $0\leq l \leq n$.
\item For each $l\leq n$ there exists a non-zero element $\epsilon_l  \in V_l$ such that
\begin{align*}
e_l =  \epsilon_l \otimes \epsilon_l \otimes 1_{B_l},\end{align*}is an idempotent.
\item If $l \leq m$, then $e_le_m=e_m = e_m e_l$.
\end{enumerate}
 \end{defn}
For brevity we have assumed that the idempotents are fixed by the anti-involution.  A few degenerate cases   require minor modifications (see \cite[Section 5.2]{HHKP}).

The idempotents from the definition of a cellularly stratified algebra give rise to a chain of two-sided idempotent  ideals $A=J_0 \supseteq J_1 \supseteq \ldots \supseteq J_n=\{0\}$, where $J_l = Ae_lA$.    This chain of ideals provides a \emph{stratification} (in the sense of \cite{CPS}) of the algebra $A$ (see \cite[Proposition 7.2]{HHKP}).  
 
 It is well-known that a cellular algebra, $A$, is quasi-hereditary if and only if the chain of cell ideals gives a stratification of $A$.  Therefore cellularly stratified algebras can be seen as an intermediate step,  
 between cellular and quasi-hereditary algebras.

\subsection{Inductive functors for cellularly stratified algebras}

Cellularly stratified algebras provide an effective framework for the study of a diagram algebra in terms of smaller input algebras.  
 The following lemma allows us to think of the input algebras as subquotients of the diagram algebra.
 
\begin{lem}[Section 2 \cite{HHKP}]\label{HHKP1}
Let $A$ be cellularly stratified.  There is an isomorphism $B_l \simeq e_lAe_l/e_lJ_{l-1}e_l$ with $1_{B_l}$ mapped to $e_l$.
\end{lem}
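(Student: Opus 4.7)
The plan is to exhibit an explicit candidate ring homomorphism
$$
\Phi \colon B_l \longrightarrow e_lAe_l / e_lJ_{l-1}e_l, \qquad b \longmapsto \epsilon_l \otimes \epsilon_l \otimes b \pmod{e_lJ_{l-1}e_l},
$$
and then verify that $\Phi$ is a ring isomorphism sending $1_{B_l}$ to $e_l$. The condition $\Phi(1_{B_l}) = e_l$ is immediate from the definition of $e_l$.

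For multiplicativity, I would unpack the iterated inflation multiplication within a single layer, which takes the form
$$
(u_1 \otimes v_1 \otimes b_1)(u_2 \otimes v_2 \otimes b_2) \equiv u_1 \otimes v_2 \otimes \bigl(b_1 \,\psi_l(v_1, u_2)\, b_2\bigr) \pmod{\text{deeper cell ideals}}
$$
for some bilinear pairing $\psi_l \colon V_l \times V_l \to B_l$ coming from the K\"onig--Xi construction. The idempotent axiom $e_l^2 = e_l$ forces $\psi_l(\epsilon_l, \epsilon_l) = 1_{B_l}$, so the product $(\epsilon_l \otimes \epsilon_l \otimes b_1)(\epsilon_l \otimes \epsilon_l \otimes b_2)$ collapses to $\epsilon_l \otimes \epsilon_l \otimes b_1 b_2$ modulo deeper cell ideals, which are exactly those absorbed into the quotient in the statement.

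The harder step is to pin down the image. Decomposing $e_l A e_l$ layer by layer via $A = \bigoplus_m V_m \otimes V_m \otimes B_m$, I would argue that the layer-$l$ contribution reduces to $\epsilon_l \otimes \epsilon_l \otimes B_l$ (yielding both a section of $\Phi$ and injectivity), while contributions from all other layers are absorbed into the quotient ideal. Contributions from deeper layers ($m > l$) lie in the quotient ideal automatically, because that ideal contains these layers by construction. The delicate case is the contributions from shallower layers ($m < l$): I would need to show that sandwiching such a diagram by $e_l$ forces the result into a deeper cell ideal. This step, which exploits the interplay between the K\"onig--Xi multiplication formulas and the stratification axioms $e_le_m = e_m = e_me_l$ (for $l \le m$), is the main obstacle; once it is settled, surjectivity of $\Phi$ and the isomorphism claim follow immediately.
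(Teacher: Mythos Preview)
The paper does not supply a proof of this lemma; it merely cites \cite{HHKP}, Section~2. So there is no argument in the paper against which to compare your proposal, and I can only comment on the proposal on its own merits.

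Your outline is the standard one and is correct, but the step you single out as the ``main obstacle'' dissolves once you notice a one-line fact. You worry about elements $x$ from shallower layers $m<l$ and whether $e_lxe_l$ is forced into a deeper ideal. You need not track this: since $e_l\in J_l=Ae_lA$ and $J_l$ is a two-sided ideal, we have $e_lAe_l\subseteq J_l$ outright. Working in $A/J_{l+1}$ (the index $l-1$ in the statement is evidently a misprint for $l+1$, given the paper's decreasing chain $J_0\supseteq J_1\supseteq\cdots$ with $J_l=Ae_lA$), the image of $e_l$ already lies in the ideal $J_l/J_{l+1}\cong V_l\otimes V_l\otimes B_l$, whence $e_l(A/J_{l+1})e_l\subseteq J_l/J_{l+1}$ and shallower layers simply never appear. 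The remainder is the single-layer computation you already sketched: $\psi_l(\epsilon_l,\epsilon_l)=1_{B_l}$ yields multiplicativity and, taking $u=v=\epsilon_l$, surjectivity onto $\epsilon_l\otimes\epsilon_l\otimes B_l\cong B_l$; injectivity is immediate since $\epsilon_l\otimes\epsilon_l\otimes b$ is nonzero in $J_l/J_{l+1}$ whenever $b\neq 0$.
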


\subsubsection{Globalisation and localisation}\label{inflation}   Following \cite{HHKP} we define globalisation and localisation functors between $e_lAe_l$ and $A$ as follows:
\begin{align*}
\G_l^A	&: \text{mod-}e_lAe_l \to \text{mod-}A			\\		
	&:M 		\mapsto	(A/J_{l+1})e_l \otimes_{e_l A e_l} M,		\\
\F_l^A	&: \text{mod-}A \to \text{mod-}e_lAe_l		\\
	&:N 		\mapsto	e_l N.
\end{align*}
 \begin{prop}[Section 4 \cite{HHKP}]\label{HHKP1}The globalisation functor has the following properties:
\begin{itemize}
\item The functor $\G_l^A$ is exact
\item Let $M$ be any $B_l$-module.  Then $\G_l^A(M) = V_l \otimes M$ as a vector space.
\end{itemize}
Let $M$ and $N$ be $B_l$-modules.  Then $\Hom_A(\G_l^A(M),\G_l^A(N))\simeq\Hom_{B_l}(M,N)$.
\end{prop}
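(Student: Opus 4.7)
The plan is to first describe the bimodule $(A/J_{l+1})e_l$ explicitly, and then obtain all three assertions as formal consequences. The key structural claim is that
\[
(A/J_{l+1})e_l \;\simeq\; V_l \otimes B_l
\]
as a left $A/J_{l+1}$-module and right $e_lAe_l$-module, with the right $e_lAe_l$-action factoring through the quotient $e_lAe_l \twoheadrightarrow B_l$ furnished by the preceding lemma. To prove this I would use the iterated inflation decomposition $A = \bigoplus_m V_m \otimes V_m \otimes B_m$ together with $e_l = \epsilon_l \otimes \epsilon_l \otimes 1_{B_l}$, and analyse $Ae_l$ layer by layer. In the layer $m=l$, right multiplication by $e_l$ pins the bottom dangle to $\epsilon_l$, leaving precisely the classes of $u \otimes \epsilon_l \otimes b$ with $u \in V_l,\ b\in B_l$. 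For layers $m>l$ one uses $e_m = e_m e_l \in Ae_l$ together with $e_m \in J_m \subseteq J_{l+1}$, so every such layer vanishes in the quotient; the layers $m<l$ are handled analogously through the ideal structure of the iterated inflation.

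Granted this identification, viewing $M$ as a $B_l$-module (so that $e_lJ_{l+1}e_l$ acts as zero) yields
\[
\G_l^A(M) \;=\; (V_l \otimes B_l) \otimes_{e_lAe_l} M \;=\; V_l \otimes (B_l \otimes_{B_l} M) \;=\; V_l \otimes M,
\]
which is both the claimed vector-space identity and the route to exactness: at the level of $K$-vector spaces $\G_l^A$ is the functor $V_l \otimes_K -$, which is obviously exact.

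For the Hom isomorphism, every module in the image of $\G_l^A$ is annihilated by $J_{l+1}$ on the left, so the standard tensor-Hom adjunction for the $(A/J_{l+1}, e_lAe_l)$-bimodule $(A/J_{l+1})e_l$ gives
\[
\Hom_A(\G_l^A(M), \G_l^A(N)) \;\simeq\; \Hom_{e_lAe_l}(M,\, e_l\,\G_l^A(N)).
\]
A short calculation with the bimodule identification yields
\[
e_l\,\G_l^A(N) \;=\; (e_lAe_l/e_lJ_{l+1}e_l)\otimes_{e_lAe_l} N \;=\; B_l \otimes_{B_l} N \;=\; N,
\]
and since the $e_lAe_l$-module structures on $M$ and $N$ factor through $B_l$, the right-hand side equals $\Hom_{B_l}(M,N)$.

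The anticipated main obstacle is the first step: one must unwind the iterated inflation multiplication carefully to confirm that $Ae_l/J_{l+1}e_l$ really is isomorphic to $V_l\otimes B_l$ with the advertised bimodule structure, checking in particular that contributions from layers $m\ne l$ either reassemble as the $l$-th layer component or are absorbed into $J_{l+1}$. Once this structural identification is secured, the remaining two assertions reduce to routine bookkeeping with the tensor-Hom adjunction.
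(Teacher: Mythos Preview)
The paper does not supply its own proof of this proposition: it is quoted directly from \cite[Section 4]{HHKP}, so there is nothing in the present paper to compare against. Your argument is the standard one and is essentially what is carried out in \cite{HHKP}. One small imprecision: your treatment of the layers $m<l$ is phrased as ``handled analogously'', but the clean reason these do not contribute is simply that $Ae_l \subseteq Ae_lA = J_l$, so $(A/J_{l+1})e_l$ already sits inside the single layer $J_l/J_{l+1}\simeq V_l\otimes V_l\otimes B_l$, whence right multiplication by $e_l=\epsilon_l\otimes\epsilon_l\otimes 1_{B_l}$ pins the bottom dangle and yields $V_l\otimes B_l$ as you claim. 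With that clarification, the exactness (free right $B_l$-module), the vector-space description, and the adjunction argument for the $\Hom$-isomorphism are all correct.
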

 Note that when we write $\otimes$ without a subscript, this tensor product should be taken to be over $K$.
The following lemma is a generalisation of \cite[Lemma 11]{Row} to the setting of cellularly stratified algebras.
We may globalise modules to any subalgebra of the form $e_mAe_m$.  We adjust the superscripts by letting  $\G_l^m=\G^{e_mAe_m}_l$ and $\F_l^m=\F^{e_mAe_m}_l$. 
\begin{lem}\label{restrict}
Let $M$ be a $B_l$-module.  Then we have that 
\begin{align*}
\F^A_m (\G_l^AM)	\simeq 
\begin{cases}
\G_l^{m}	M &\text{ if }l \geq m \\
0				& \text{ otherwise}
\end{cases}.
\end{align*}
\end{lem}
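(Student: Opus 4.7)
The plan is to unpack the composition and compare the two sides directly. Writing out the definitions gives $\F_m^A\G_l^A M = e_m(A/J_{l+1})e_l \otimes_{e_lAe_l} M$, so everything reduces to understanding the bimodule $N := e_m(A/J_{l+1})e_l$ together with the ring over which we tensor. I would split into the two cases stated in the lemma.

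If $l<m$, then $m\ge l+1$ and consequently $J_m \subseteq J_{l+1}$; since $e_m\in Ae_mA = J_m$, the image of $e_m$ in $A/J_{l+1}$ is zero, hence $N=0$ and the tensor product vanishes. If instead $l\ge m$, axiom~(3) of Definition~\ref{cellDEF} supplies the relations $e_me_l = e_l = e_le_m$ and, since $l+1>m$, also $e_me_{l+1} = e_{l+1} = e_{l+1}e_m$. These give the identifications
\[
e_mAe_l = (e_mAe_m)\,e_l, \qquad e_lAe_l = e_l(e_mAe_m)e_l,
\]
together with
\[
e_mJ_{l+1}e_m \;=\; e_m A e_{l+1} A e_m \;=\; (e_mAe_m)\,e_{l+1}\,(e_mAe_m),
\]
which is precisely the defining ideal at level $l+1$ of the cellularly stratified structure inherited by $e_mAe_m$. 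Substituting these three identifications into $N\otimes_{e_lAe_l}M$ recognises it as the globalisation $\G_l^{e_mAe_m}M = \G_l^m M$.

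The part of the argument needing most care is verifying that $e_mAe_m$ really inherits a cellularly stratified structure from $A$ with idempotents $e_m,e_{m+1},\ldots,e_n$, so that $\G_l^m$ is given by exactly the formula used above (with $J_{l+1}$ replaced by the analogous ideal within $e_mAe_m$). This follows by restricting the axioms of Definition~\ref{cellDEF} to the corner $e_mAe_m$ using the idempotent relations in axiom~(3); once in place, each of the displayed vector-space equalities is tautologically an equality of $(e_mAe_m,e_lAe_l)$-bimodules, so the final isomorphism is automatically one of left $e_mAe_m$-modules and no additional bookkeeping is required.
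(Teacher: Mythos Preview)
Your argument is correct and follows essentially the same route as the paper's proof: both unravel the definitions to $e_m(A/J_{l+1})e_l\otimes_{e_lAe_l}M$, kill the $l<m$ case via $e_m\in J_{l+1}$, and in the $l\ge m$ case use the idempotent relations to rewrite the bimodule as $((e_mAe_m)/e_mJ_{l+1}e_m)e_l\otimes_{e_lAe_l}M$. You are slightly more explicit than the paper in checking that $e_mJ_{l+1}e_m$ is the correct ideal inside $e_mAe_m$ and that the tensoring ring matches, which the paper leaves implicit.
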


\begin{proof}
From the definitions of the functors we have that:
\begin{align*}
\F^A_m (\G_l^AM)	&=		e_m(A/J_{l+1})e_l \otimes_{e_l A e_l} M.		 
\end{align*}
We note that $e_le_m=e_me_l = e_l$ if $l \geq m$, and therefore in this case we can multiply on the right of $e_m(A/J_{l+1})e_l$ by $e_m$ without effect.  If $l < m$, then left multiplication by $e_m$ annihilates $A/J_{l+1}$ and so we get
\begin{align*}
=				\begin{cases}
					((e_mAe_m) /e_mJ_{l+1}e_m)e_l \otimes_{e_l A e_l}    M \text{ if $l \geq m$},\\
					0 \text{ otherwise.} 
					\end{cases}	
\end{align*}\end{proof}

\subsubsection{Induction and Restriction}\label{ind}  
The following induction functors are used in \cite{Row} in order to define permutation modules for the Brauer algebra.  
\begin{align*}
\Ind_{e_lAe_l}^A	&: \text{mod-}{e_lAe_l} \to \text{mod-}A			\\
	&:M 		\mapsto	Ae_l \otimes_{e_l A e_l} M,		\\
\Res_{e_lAe_l}^A	&: \text{mod-}A \to \text{mod-}{e_lAe_l}	\\
	&:N 		\mapsto	e_lN.
\end{align*}
We compose these functors with the usual induction and restriction to obtain induction and restriction between $A$ and any unitary subalgebra, ${H}$, of $e_lAe_l$.
\begin{align*}
\Ind_{{H}}^A	&: \text{mod-}{H} \to \text{mod-}A			\\
	&:M 		\mapsto	Ae_l \otimes_{{H}} M,		\\
\Res_{{H}}^A	&: \text{mod-}A \to \text{mod-}{H}	\\
	&:N 		\mapsto	e_lN.
\end{align*}
We have the following adjunctions:
\begin{align*}
\Hom_A(\Ind_{{H}}^A(M), N) \!\simeq\! \Hom_A({{Ae_l}}\otimes_HM, N)\!\simeq\!\Hom_{{H}}(M, \Hom_A(Ae_l,N)) 
\!\simeq\!\Hom_{{H}}(M, e_lN) . 
\end{align*}

\subsection{A filtration of the induction functor}\label{wobbles} 
 
 In order to generalise the  Dipper--James--Green  basis of an endomorphism algebra, one requires an exact induction.  Our induction functor is readily seen to be right exact only, and so we need to construct a filtration of this induction.  

The induction functor  
arises from tensoring with the one-sided ideal $Ae_l$.  We recall the filtration of $A$ by two-sided ideals $A = J_0 \supseteq J_1 \supseteq \ldots \supseteq J_n \supseteq 0,$ where $J_i=Ae_iA$.  Therefore $Ae_l$ has a filtration by left ideals $Ae_l = J_{l,0}  \supseteq J_{l,1} \supseteq \ldots \supseteq 0,$ where $J_{l,i}=(Ae_{l+i}A/Ae_{l+i+1}A)e_l$.
Therefore our induction has a filtration
 with subquotients given by $$J_{l,i}/J_{l,i+1} \otimes_{e_lAe_l} - : e_lAe_l\text{-mod} \to A\text{-mod}.$$
 Note that the first layer $J_{l,0}/J_{l,1} \otimes_{e_lAe_l}M\simeq\G_l(M)$ and so the top layer of the induction functor is isomorphic to the (exact) globalisation  functor.
In the cases of the Brauer, walled Brauer, and partition algebras we shall exhibit each layer as a composition of (exact) globalisation and localisation functors.

\section{Quasi-hereditary Covers} We review the notion of a quasi-hereditary cover due to Rouquier.  We then discuss how we will construct covers of diagram algebras.
   \subsection{Quasi-hereditary covers} 
Let $S$ be a quasi-hereditary algebra and $M$ be a finitely generated projective module.  Let $A=\End_S(M)$.  We are interested in the following double centraliser property. 
\begin{defn}[Rouquier]
We say that  the pair $(S,M)$ is a \emph{quasi-hereditary cover} of $A$ if the restriction of $F=\Hom_S(M, - )$ to the category of projective modules for $S$ is fully faithful.
\end{defn}
We want to know the strength of the connection between an algebra and its cover. This is achieved by considering the level of `faithfulness' in  cohomology.
\begin{defn}[Rouquier]
Let $i$ be a non-negative integer.  We say that the pair $(S,M)$ is an $i$-\emph{faithful cover} of $A$ if $F=\Hom_S(M, -)$ induces isomorphisms $\Ext^j_S(M,N)$ $\simeq \Ext^j_A(FM,FN)$ for all $M, N \in \mathcal{F}(\Delta)$ and $j\leq i$.
\end{defn}
\noindent 
It was shown in \cite[Proposition 4.45]{rouq} that 1-covers are unique up to Morita equivalence.

\subsection{Covers of diagram algebras}
Let $A$ be cellularly stratified, with input algebras $B_l$.  We have that $B_l$ appears as a quotient, $e_l(A/Ae_{l+1}A)e_l$, of $e_lAe_l$.  Assume further that $B_l$ appears as a unitary subalgebra of $e_lAe_l$, for all $l\leq n$.  This is a natural assumption as can be seen in examples such as the  walled Brauer algebra (see \cite{coxwall}),  and the   Brauer and partition algebras (see \cite{HHKP}) and their cyclotomic analogues.

Assume the input algebras, $B_l$, have 
integrally defined quasi-hereditary covers $(S(B_l),$ $M_{B_l})$.  Following Hartmann and Paget we define the permutation modules, $M_l$, for the diagram algebra, $A$, to be the induced modules, $M_l=\Ind_{B_l}^AM_{B_l}$, from the subalgebras $B_l$ of $e_lAe_l$.
  These modules are characteristic and specialisation-free.  We let $M=\oplus_{l\geq0} M_l$.  We shall study the endomorphism algebra
\begin{align*}
S(A)=\End_A(M).
\end{align*}

In our examples we shall consider diagram algebras with a stratification given by group algebras of symmetric groups.
Therefore the module $M$ is a direct sum of `Young' permutation modules $M=\oplus_{l\geq 0, {\lambda \in \Lambda_A}}M(\lambda,l)$ where $M(\lambda,l)=\Ind^A_{\Sigma_{\lambda}}K$.

We recast \cite[Theorem 13.1]{HHKP} in terms of permutation modules.


\begin{thm}\label{HHKPmain}
Let $A$ be a cellularly stratified algebra, and suppose that $e_lAe_l$ has a subalgebra isomorphic to the input algebra $B_l$  for all $l\leq n$.  Suppose that for each $l$ the algebra $B_l$ has a 1-faithful cover $(S({B_l}),M_l)$. 
If $\Hom_A( M
, -)$ is exact on $\mathcal{F}(\Delta)$
, then
(S(A),M) 
is a 1-faithful quasi-hereditary cover of $A$. 
Moreover $S(A)$ is Morita equivalent to the Schur algebra constructed in \cite{HHKP}.
\end{thm}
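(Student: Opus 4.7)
The plan is to reduce the statement to \cite[Theorem 13.1]{HHKP} by identifying $M = \bigoplus_l \Ind_{B_l}^A M_{B_l}$ as a module whose endomorphism algebra is Morita equivalent to the HHKP Schur algebra. The essential ingredients are the filtration of the induction functor from Section~\ref{wobbles} together with the exactness hypothesis on $\Hom_A(M,-)$.

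First I would establish that $M \in \mathcal{F}(\Delta)$. Each summand $M_l = \Ind_{B_l}^A M_{B_l}$ inherits the filtration of Section~\ref{wobbles} whose top subquotient is $\G_l^A(M_{B_l})$, which as a vector space equals $V_l \otimes M_{B_l}$. Since $(S(B_l), M_{B_l})$ is a quasi-hereditary cover, $M_{B_l} \in \mathcal{F}(\Delta_{B_l})$, and exactness of $\G_l^A$ ensures that this top layer lies in $\mathcal{F}(\Delta)$. The lower layers $J_{l,i}/J_{l,i+1} \otimes_{e_lAe_l} M_{B_l}$ may be rewritten, via the compatibility $\F_m^A \circ \G_l^A \simeq \G_l^m$ established in the preceding lemma, as globalisations of permutation modules from the smaller diagram subalgebras $e_m A e_m$; an induction on $n-l$ therefore shows that every layer, and hence $M$ itself, lies in $\mathcal{F}(\Delta)$.

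Second, the hypothesis that $\Hom_A(M,-)$ is exact on $\mathcal{F}(\Delta)$ says precisely that $M$ is $\Ext^1$-projective in this subcategory. Combined with $M \in \mathcal{F}(\Delta)$ and the 1-faithful cover hypothesis on each input algebra $B_l$, this places us in the exact hypotheses of \cite[Theorem 13.1]{HHKP}, which produces a 1-faithful quasi-hereditary cover.

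Finally, for the Morita equivalence I would compare the indecomposable summands of $M$ with those of the HHKP progenerator. The HHKP construction uses the same input data $(S(B_l), M_{B_l})$, and the adjunction $\Hom_A(\Ind_{B_l}^A N, -) \simeq \Hom_{B_l}(N, e_l(-))$ from Section~\ref{ind}, applied layer-by-layer to the induction filtration, exhibits $\End_A(M)$ as being assembled from the $\End_{B_l}(M_{B_l}) = S(B_l)$ in the same layered fashion as the HHKP Schur algebra. The cover property and 1-faithfulness then transfer along this Morita equivalence. The main obstacle is this last step: matching the indecomposable summands of $M$ with those of the HHKP progenerator requires careful tracking of idempotents through the filtration of the induction functor and a delicate use of the adjunctions of Section~\ref{ind}; the first two steps are essentially routine applications of the exactness properties developed in Sections~\ref{inflation}--\ref{wobbles} and the cover framework of \cite{HHKP}.
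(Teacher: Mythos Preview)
Your Step~2 misreads what \cite[Theorem~13.1]{HHKP} provides. That theorem does not give a criterion on an arbitrary module $M$ which, once verified, yields a cover $(\End_A(M),M)$; rather, it constructs a \emph{specific} 1-faithful cover out of the HHKP `Young modules' $Y(\lambda,l)$ of \cite[Definition~11.2]{HHKP}. So the existence of \emph{a} 1-faithful cover is already given by \cite{HHKP}; the entire content of the theorem here is the Morita equivalence, and for that one must show that $M$ and the HHKP progenerator have the same indecomposable summands. Your Step~3 recognises this but the proposed ``layer-by-layer adjunction'' does not deliver a matching of summands: knowing $\End_A(M)$ is filtered by pieces built from the $S(B_l)$ tells you nothing about \emph{which} indecomposables occur in $M$.

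The paper's argument is short and uses one ingredient you are missing: \cite[Lemma~22]{Row}. Together with the exactness hypothesis on $\Hom_A(M,-)$, that lemma forces each $M_l=\Ind_{B_l}^A M_{B_l}$ to decompose as a direct sum of HHKP Young modules. Conversely, for each $(\lambda,l)\in\Lambda_A$ the $B_l$-Young module $Y(\lambda)$ is a summand of $M_{B_l}$, so $\Ind_{B_l}^A Y(\lambda)$ is a summand of $M$; it is itself a sum of HHKP Young modules by the first step, and since the top layer of the induction filtration gives a surjection $\Ind_{B_l}^A Y(\lambda)\twoheadrightarrow\G_l(Y(\lambda))\twoheadrightarrow\Delta(\lambda,l)$, the HHKP Young module indexed by $(\lambda,l)$ must occur. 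Thus $M$ and the HHKP progenerator have identical indecomposable summands, and Morita equivalence follows. Your Step~1 is not needed as a separate argument (and in any case the claim that the lower layers of the induction are globalisations is only established example-by-example later in the paper, not at this level of generality).
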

\begin{proof}
The existence of such a cover is proved in \cite{HHKP}.  That $M_l$ 
 is a direct sum of `Young modules'  (in the sense of \cite[Definition 11.2]{HHKP}) follows from \cite[Lemma 22]{Row} and our assumption that $\Hom_A(M, -)$ is exact on $\mathcal{F}(\Delta)$. 

 Let $\{Y(\lambda)\}_{\lambda \in \Lambda_{B_l}}$ be the `Young modules' for the algebra $B_l$.  These all appear as direct summands of $M_l$ by assumption.  We therefore get that $Y(\lambda,l)=\Ind_{B_l}^A(Y(\lambda))$ appear as direct summands of $M$.  By the previous paragraph $Y(\lambda,l)$ is a direct sum of `Young modules' and by properties of the induction there exists a projection $Y(\lambda,l) \to \Delta(\lambda)$.  Therefore for all $(\lambda,l) \in \Lambda_A$ we get that $Y(\lambda,l)$ is a direct summand of $\Ind_{B_l}^AM_{B_l}$.

Therefore our definition of $S(A)$ is Morita equivalent to that in \cite{HHKP} and therefore $S(A)$ is a 1-faithful cover. 
\end{proof}

 \section{Bases of the Schur Algebra}
We wish to construct a cellular basis of the cover $(S(A),M)$ by lifting the cellular bases from the covers of the input algebras $(S(B_l),M_l)$.   
All the examples of diagram algebras we shall consider are iterated inflations of symmetric groups and so we review the combinatorics of the classical Schur algebra. 
 
\subsection{The Green--Dipper--James basis}

We review the combinatorics and construction of this basis (see \cite{Green} and \cite{DJ} for the quantised version), as it will be essential for what follows.  

\subsubsection{}\label{distinguish}
Let $\Sigma_r$ denote the symmetric group on $r$ letters.  A \emph{composition} $\lambda$ of $r$ is a sequence $(\lambda^1, \lambda^2,  \ldots)$ such that $\sum_{i=1}^\infty \lambda^i=r$; this will be denoted $\lambda \vDash r$.  If in addition this sequence is weakly decreasing then it is called a \emph{partition} and will be denoted $\lambda \vdash r$.

We let $\Std(\lambda)$ denote the set of standard $\lambda$-tableaux.  For $\lambda \vDash r$ we let $\mathfrak{t}^\lambda$ denote the $\lambda$-tableau in which the numbers $1, \ldots , r$ appear along successive rows.  We have a natural left action of $\Sigma_r$ on the set of all $\lambda$-tableaux by  
letter permutations.  
For example, for $\lambda = (3,2)$, $w=(3 \ 5)  \in \Sigma_5$,
\begin{align*}
\begin{minipage}{44mm}
\begin{equation*}  
w\mathfrak{t}^\lambda   =
\def\objectstyle{\scriptstyle}
\xymatrix@=6pt{
&  1 w  &2  w & 3 w  \\
& 4 w  & 5  w
}
\end{equation*}
\end{minipage}
=
\begin{minipage}{30mm}
\begin{equation*}  
\def\objectstyle{\scriptstyle}
\xymatrix@=6pt{
  1    &2    & 5    \\
 4    & 3   
}
\end{equation*}
\end{minipage}
\end{align*}

We let $x_\lambda$ (respectively $y_\lambda$) denote the sum  (respectively signed sum) over the row  (respectively column) stabiliser of $\mathfrak{t}^\lambda$.  The Specht module, $\mathcal{S}(\lambda)$, is defined to be $K\Sigma_r x_\lambda y_\lambda$.

For $\mu \vDash r$ we let $\mathcal{D}
_\mu=\{w \in \Sigma_r : w t^\mu    \text{ is row-standard} \}$.  It is well known that $\mathcal{D}_\mu $ forms a set of left coset representatives of $\Sigma_\mu$ in $\Sigma_r$.  For $\lambda \vDash r$ we analogously define $\mathcal{D}_\lambda^{-1} $ by $\Sigma_r$ acting on the right (this action is given by the above, composed with inversion); this gives a   set of right coset representatives of $\Sigma_\lambda$ in $\Sigma_r$.

Therefore each row standard $\mu$-tableau corresponds to a coset of $\Sigma_{\mu}$ in $\Sigma_r$.  If $\mathfrak{t}$ is a row standard $\mu$-tableau let $d(\mathfrak{t})$ be the unique element of $\Sigma_r$ such that $\mathfrak{t} = d(\mathfrak{t}) \mathfrak{t}^\mu$.
We have that $\mathcal{D}_{ \mu \lambda} = \mathcal{D}_\mu \cap \mathcal{D}_\lambda^{-1}$ is a distinguished set of $\Sigma_\lambda-\Sigma_\mu$ double cosets in $\Sigma_r$, which we have defined through pairs of row-standard tableaux.

 Let $\lambda$ and $\mu$ be  partitions of $n$.  We say that $\lambda$
dominates $\mu$ and write $\mu \unlhd_n \lambda$ if
$  \sum_{i=1}^k \lambda^i  \geq  
\sum_{i=1}^k \mu^i 
$
for all $k\geq 0$.
 
\subsubsection{}
 
Let $\lambda=(\lambda^1,\lambda^2, \ldots) \vDash r$.  We then define the Young subgroup $\Sigma_\lambda \leq \Sigma_r$ to be the direct product $\Sigma_{\lambda^1} \times \Sigma_{\lambda^2} \times \ldots \leq \Sigma_r$.  We define $M(\lambda)$ be the Young permutation module $K\!\!\uparrow_{\Sigma_\lambda}^{\Sigma_r}$.
 The following theorem gives a basis for the homomorphisms between Young permutation modules.   

\begin{thm}[The Green--Dipper--James Basis]\label{DJbasisofschur}
Let $\lambda, \mu \vDash r$, 
then $\{\varphi_d : d \in \mathcal{D}_{\lambda\mu  }\}$ is a basis of $\Hom_{K\Sigma_r}(M(\lambda),M(\mu))$, where $\varphi_d$ is given by:
\begin{align*}
\varphi_d(x_\lambda) &= \sum_{w \in \mathcal{D}_\nu \cap \Sigma_\lambda}
wdx_\mu , 
\end{align*}
where $\nu$ is the composition of $r$ which labels the Young subgroup $d\Sigma_\mu d^{-1}  \ \cap \ \Sigma_\lambda$ of $\Sigma_r$.  \end{thm}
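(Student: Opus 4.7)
The plan is to convert the Hom space into a space of invariants and then parametrise the latter via double cosets. Since $M(\lambda) = \Ind_{\Sigma_\lambda}^{\Sigma_r} K$, tensor--Hom adjunction gives
\[
\Hom_{K\Sigma_r}(M(\lambda), M(\mu)) \simeq \Hom_{K\Sigma_\lambda}(K, \Res^{\Sigma_r}_{\Sigma_\lambda} M(\mu)) = M(\mu)^{\Sigma_\lambda},
\]
the space of left $\Sigma_\lambda$-fixed vectors in $M(\mu)$. Concretely, under the identification $M(\lambda) \simeq K\Sigma_r x_\lambda$, a homomorphism $\varphi$ is determined by the invariant element $\varphi(x_\lambda)$, and conversely any $v \in M(\mu)^{\Sigma_\lambda}$ determines a well-defined homomorphism by $g x_\lambda \mapsto g v$. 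Hence it suffices to produce a natural basis of $M(\mu)^{\Sigma_\lambda}$ and identify it with the stated formulas.

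Writing $M(\mu) = K\Sigma_r x_\mu$, a $K$-basis is given by $\{g x_\mu : g \in \mathcal{D}_\mu\}$, and the left $\Sigma_\lambda$-action permutes this basis according to the natural action on $\Sigma_r/\Sigma_\mu$. The orbits of this action are precisely the double cosets $\Sigma_\lambda \backslash \Sigma_r / \Sigma_\mu$, which are parametrised by the distinguished set $\mathcal{D}_{\lambda\mu}$. Consequently the orbit sums
\[
\sum_{w \in \Sigma_\lambda / (\Sigma_\lambda \cap d\Sigma_\mu d^{-1})} wd\, x_\mu, \qquad d \in \mathcal{D}_{\lambda\mu},
\]
form a basis of $M(\mu)^{\Sigma_\lambda}$. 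Two standard combinatorial facts bring this into the form stated in the theorem: firstly, $\Sigma_\lambda \cap d\Sigma_\mu d^{-1}$ equals the standard Young subgroup $\Sigma_\nu$ whose composition $\nu$ records the sizes of the row-intersections between $\mathfrak{t}^\lambda$ and $d\mathfrak{t}^\mu$; and secondly, $\mathcal{D}_\nu \cap \Sigma_\lambda$ is a set of distinguished left coset representatives of $\Sigma_\nu$ in $\Sigma_\lambda$, using that $\Sigma_\nu \le \Sigma_\lambda$ and the transitivity of distinguished representatives along nested parabolic subgroups of $\Sigma_r$.

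The principal obstacle is the verification of the first combinatorial identity: that the intersection of a standard Young subgroup with a conjugate of another standard Young subgroup is itself a standard Young subgroup, with the claimed tableau-theoretic description. Everything else is routine bookkeeping; once this is in hand, the assignment $d \mapsto \varphi_d$ is manifestly injective (distinct orbit sums are linearly independent) and spanning (every invariant decomposes into orbit sums), so $\{\varphi_d : d \in \mathcal{D}_{\lambda\mu}\}$ is the required basis.
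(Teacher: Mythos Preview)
Your proposal is correct and follows precisely the route the paper indicates: the paper's own ``proof'' is merely the remark that the theorem is obtained by applying Frobenius reciprocity and Mackey decomposition, and that is exactly what you do (adjunction reduces to $\Sigma_\lambda$-invariants in $M(\mu)$, and the orbit decomposition of the permutation basis is the Mackey step). Your write-up is in fact more detailed than the paper's, and the combinatorial point you flag --- that $\Sigma_\lambda \cap d\Sigma_\mu d^{-1}$ is a standard Young subgroup for $d\in\mathcal{D}_{\lambda\mu}$ --- is the standard lemma underpinning Mackey's theorem for symmetric groups, so identifying it as the only nontrivial ingredient is appropriate.
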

\begin{Remark on the proof}
 The theorem is proved by the application of both  {Frobenius reciprocities} and {Mackey decomposition}.  
  \end{Remark on the proof}

 \subsection{The semistandard basis theorem}
The semistandard basis theorem  gives a cellular basis of the Schur algebra.  We follow the construction of this basis due to Dipper, James, and Mathas (see \cite{DJM})   given by `lifting' the Murphy basis of the permutation modules of the symmetric group. 
\subsubsection{}
Suppose $\omega$ is a partition of $r$ and let $\mathfrak{t}$ be an $\omega$-tableaux. Say that $\mathfrak{t}$ is of type $\lambda$ if each integer $i \geq 1$ appears $\lambda^i$ times in $\mathfrak{t}$. Let $T (\omega, \lambda)$ denote the set of $\omega$-tableaux of type $\lambda$. We say that $\S \in T  (\omega, \lambda)$ is row-standard if the entries are non-decreasing along the rows, and semistandard if it is row-standard and the entries are strictly increasing down the columns.  We let $T_0(\omega,\lambda)$ denote the semistandard $\omega$-tableaux of type $\lambda$ and $T_0(\lambda)$ denote the union $\bigcup_{\omega \vdash  r}T_0(\omega, \lambda)$.

We wish to convert tableaux of type $\nu$ to tableaux of type $\omega$.  Let $\mathfrak{t}$ be a tableau of type $\nu$ and let $\omega$ be a composition.  We define $\omega(\mathfrak{t})$ to be the tableau of type $\omega$ obtained from $\mathfrak{t}$ by replacing the entry $i$ in $\mathfrak{t}$ by $r$ if $i$ appears in row $r$ of $\mathfrak{t}^\omega$.

\subsubsection{}\label{Smurphet} Let $\omega \vdash r$ and $\lambda\vDash r$.  Let $\T$ be a semistandard $\omega$-tableaux of type $\lambda$ and let $\mathfrak{s}$ be a standard $\omega$-tableau.   Define $m_{\mathfrak{s} \mathfrak{t}}= {d(\mathfrak{s})}x_\mu d({\mathfrak{t}})^{-1}$.  Then $m_{\mathfrak{s} \mathfrak{t}}^\ast = m_{\mathfrak{t} \mathfrak{s}}$.
We define
\begin{align*}
m_{ \mathfrak{s}\T} = \sum_{	\begin{subarray}{l}
        \mathfrak{t}\in \Std(\omega), \\
	\lambda(\mathfrak{t})=\T
	      \end{subarray}		  }
m_{\mathfrak{s}\mathfrak{t}}
\end{align*}
and let $m_{\T\mathfrak{s}} = m_{\mathfrak{s}\T}^\ast$.
\begin{thm}[Murphy basis \cite{16}]
The module $M(\lambda)$ is free as a $K$-module with basis
\begin{align*}
\{ m_{ \mathfrak{s}\T} : \T \in T_0(\omega, \lambda), \mathfrak{s} \in \Std(\omega) \text{ for some } \omega \vdash r\}.
\end{align*}
\end{thm}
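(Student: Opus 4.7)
The strategy is to identify $M(\lambda)$ with the left ideal $K\Sigma_r x_\lambda \subseteq K\Sigma_r$ via $g\otimes 1\mapsto gx_\lambda$, and then project the Murphy cellular basis of $K\Sigma_r$ onto a basis of this cyclic module, grouping Murphy elements according to the semistandard tableau they determine.

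First, right multiplication by $x_\lambda$ gives a surjection $K\Sigma_r\twoheadrightarrow K\Sigma_r x_\lambda\cong M(\lambda)$. Applied to the Murphy basis $\{m_{\mathfrak{u}\mathfrak{v}}:\mathfrak{u},\mathfrak{v}\in\Std(\omega),\ \omega\vdash r\}$ of $K\Sigma_r$, this produces the spanning set $\{m_{\mathfrak{s}\mathfrak{t}}x_\lambda\}=\{d(\mathfrak{s})\,x_\omega d(\mathfrak{t})^{-1}x_\lambda\}$ for $M(\lambda)$. The heart of the argument is a double-coset computation: $x_\omega d(\mathfrak{t})^{-1}x_\lambda$ depends on $\mathfrak{t}$ only through the $\Sigma_\omega$--$\Sigma_\lambda$ double coset of $d(\mathfrak{t})^{-1}$, that is, through the row-nondecreasing filling $\lambda(\mathfrak{t})$. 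Summing over the fibre $\{\mathfrak{t}\in\Std(\omega):\lambda(\mathfrak{t})=\T\}$ then groups the spanning set into the elements $m_{\mathfrak{s}\T}\in M(\lambda)$, indexed by pairs $(\mathfrak{s},\T)$ with $\mathfrak{s}\in\Std(\omega)$ and $\T$ a row-nondecreasing $\omega$-tableau of type $\lambda$. This double-coset analysis is exactly the content of the Green--Dipper--James basis theorem above, applied to the special situation of Young subgroups.

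Next I would eliminate those $\T$ that fail column-strictness. Garnir-type straightening relations, valid integrally because they are already implicit in the cellularity of the Murphy basis of $K\Sigma_r$, express each $m_{\mathfrak{s}\T}$ for non-column-strict $\T$ as a $K$-linear combination of $m_{\mathfrak{s}\T'}$ for column-strict $\T'$ that strictly dominate $\T$, modulo basis elements attached to strictly more dominant shapes $\omega$. An induction on the dominance order on $\omega$, and within each $\omega$ on the dominance order on the content-filling $\T$, shows that the genuinely semistandard $m_{\mathfrak{s}\T}$ already span. Linear independence then follows by comparing cardinalities: the number of such pairs is $\sum_\omega |\Std(\omega)|\cdot|T_0(\omega,\lambda)| = [\Sigma_r:\Sigma_\lambda] = \dim_K M(\lambda)$, by the classical Young--Kostka identity for multinomial coefficients.

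The main obstacle is the straightening step: the Garnir-type reductions must be carried out respecting the cellular triangularity with respect to the dominance order, and integrality must be verified so that the argument is genuinely characteristic-free. An alternative route, bypassing explicit straightening, is to pair $\{m_{\mathfrak{s}\T}\}$ against a suitably chosen family of dual elements indexed by the same combinatorial data (for instance, tableau-indexed homomorphisms from the Green--Dipper--James basis) and to observe that the resulting Gram matrix is unitriangular with respect to the dominance order on $\omega$; this deduces linear independence directly from the spanning statement and sidesteps the combinatorial reduction entirely.
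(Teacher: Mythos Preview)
The paper does not supply a proof of this theorem: it is stated with attribution to Murphy's 1992 paper \cite{16} and invoked as background, so there is no argument in the paper to compare your proposal against.

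That said, your outline is in the spirit of Murphy's original proof and the Dipper--James--Mathas treatment, but one step is not formulated correctly. You write that $x_\omega d(\mathfrak{t})^{-1}x_\lambda$ depends on $\mathfrak{t}$ only through the double coset of $d(\mathfrak{t})^{-1}$; this is not how the grouping arises. The element $m_{\mathfrak{s}\T}$ is \emph{defined} as the sum $\sum_{\lambda(\mathfrak{t})=\T} m_{\mathfrak{s}\mathfrak{t}}$, and the nontrivial point is rather that this sum already lies in $K\Sigma_r x_\lambda$ (equivalently, is right $\Sigma_\lambda$-invariant). Individual summands $m_{\mathfrak{s}\mathfrak{t}}x_\lambda$ do \emph{not} collapse to a common value; instead the right action of $\Sigma_\lambda$ permutes the summands among themselves. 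Once that invariance is established, your straightening/induction argument on dominance and the Kostka dimension count are the standard route and are fine.
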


\begin{cor}
The $\Sigma_r$-module $M(\lambda)$ has a filtration $M(\lambda)=M_1 \supseteq M_2 \supseteq \ldots \supseteq M_{k+1} =0$ such that there exists $\mu^i\vdash r$ with $M_i/M_{i+1} \simeq \mathcal{S}(\mu^i)$. Moreover, for each partition $\mu$ the number of $\mu^i$ equal to $\mu$ is the number of semistandard $\mu$-tableaux of type $\lambda$.
\end{cor}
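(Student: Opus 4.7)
My plan is to construct the filtration directly from the Murphy basis of the previous theorem, using the partial order on semistandard tableaux induced by the dominance order on their shapes. Fix a total order $\T_1, \T_2, \ldots, \T_k$ on the set $\bigsqcup_{\omega \vdash r} T_0(\omega, \lambda)$ of all semistandard tableaux of type $\lambda$, refining the partial order where $\T \prec \T'$ whenever $\mathrm{shape}(\T) \lhd \mathrm{shape}(\T')$. For each $i$, let $M_i$ be the $K$-span of the basis vectors $m_{\mathfrak{s}\T_j}$ with $j \ge i$, as $\mathfrak{s}$ ranges over $\Std(\mathrm{shape}(\T_j))$.

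First I would show each $M_i$ is $\Sigma_r$-stable. This is the analogue of the cellularity condition (C3) for the Murphy basis. Concretely, for $w \in \Sigma_r$, one expands $w \cdot m_{\mathfrak{s}\T}$ (with $\T$ of shape $\omega$) using the Murphy straightening rules and Mathas's garnir-style arguments, obtaining
\[
w \cdot m_{\mathfrak{s}\T} \;=\; \sum_{\mathfrak{s}' \in \Std(\omega)} r_w(\mathfrak{s}',\mathfrak{s})\, m_{\mathfrak{s}'\T} \;+\; (\text{terms }m_{\mathfrak{s}''\T''}\text{ with shape}(\T'') \rhd \omega),
\]
where the scalars $r_w(\mathfrak{s}',\mathfrak{s})$ depend only on $w$, $\mathfrak{s}'$, $\mathfrak{s}$ and not on $\T$. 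This is precisely the statement that the Dipper--James--Mathas basis of the Schur algebra acts cellularly on permutation modules; it follows from the fact that $m_{\mathfrak{s}\T}$ is a linear combination of the $m_{\mathfrak{s}\mathfrak{t}}$ for $\mathfrak{t}$ of shape $\omega$ and type $\lambda(\mathfrak{t}) = \T$, and the known action of $\Sigma_r$ on the Murphy basis $\{m_{\mathfrak{s}\mathfrak{t}}\}$ of the regular representation. From the display, $w \cdot M_i \subseteq M_i$ by the choice of ordering.

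Next I would identify the subquotients. For each $i$, the quotient $M_i/M_{i+1}$ has $K$-basis the classes of $m_{\mathfrak{s}\T_i}$ as $\mathfrak{s}$ ranges over $\Std(\omega)$, where $\omega = \mathrm{shape}(\T_i)$. The action formula above descends to
\[
w \cdot \overline{m_{\mathfrak{s}\T_i}} \;=\; \sum_{\mathfrak{s}' \in \Std(\omega)} r_w(\mathfrak{s}',\mathfrak{s})\, \overline{m_{\mathfrak{s}'\T_i}},
\]
and since the coefficients $r_w(\mathfrak{s}',\mathfrak{s})$ are exactly those governing the Murphy basis $\{m_{\mathfrak{s}\mathfrak{t}^\omega}\}$ of the Specht module $\mathcal{S}(\omega) = K\Sigma_r x_\omega y_\omega$ (the case $\T_i$ replaced by the `one-type' semistandard tableau), the map $\overline{m_{\mathfrak{s}\T_i}} \mapsto m_{\mathfrak{s}\mathfrak{t}^\omega}$ is a $K\Sigma_r$-isomorphism $M_i/M_{i+1} \simeq \mathcal{S}(\omega)$. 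Setting $\mu^i = \omega$ for this $i$ gives the required filtration, and the multiplicity count is immediate: the number of $i$ with $\mu^i = \mu$ equals $|T_0(\mu,\lambda)|$.

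The main obstacle is verifying the displayed action formula, i.e.\ that left multiplication by $w$ on $m_{\mathfrak{s}\T}$ produces only terms indexed by the same $\T$ plus dominance-higher corrections, with coefficients independent of $\T$. This is not logically contained in the plain statement of the Murphy basis theorem quoted above, but follows from the stronger cellular-basis theorem of Murphy (the reference \cite{16}) combined with the observation that summing $m_{\mathfrak{s}\mathfrak{t}}$ over $\mathfrak{t}$ with $\lambda(\mathfrak{t}) = \T$ is compatible with the $\Sigma_\lambda$-averaging that produces $x_\lambda$ on the right, so the $\T$-label is preserved by left action. Once this is in hand, the rest is bookkeeping with the chosen total order.
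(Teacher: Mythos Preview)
Your proposal is correct and is precisely the standard argument that the paper leaves implicit by stating the result as a corollary of the Murphy basis theorem: order the semistandard tableaux by refining dominance on shapes, take the spans $M_i$, and use the cellular action formula from \cite{16} to identify each subquotient with a Specht module. The only point worth noting is that the paper defines $\mathcal{S}(\omega)=K\Sigma_r x_\omega y_\omega$, so your isomorphism $\overline{m_{\mathfrak{s}\T_i}}\mapsto m_{\mathfrak{s}\mathfrak{t}^\omega}$ implicitly uses the equivalence of this with the Murphy-basis realisation of the Specht module (as the cell module $x_\omega K\Sigma_r$ modulo the ideal spanned by higher shapes); this is of course part of Murphy's theorem, but you might say so explicitly.
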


\subsubsection{}\label{homotheoremgreen} We recall the notation needed to state the semistandard basis theorem.  Let $\omega\vdash r$ and let $\lambda,\mu \vDash r$.  Suppose $\S \in T_0(\omega,\lambda )$ and $\T \in T_0(\omega, \mu)$.  Now define
\begin{align*}
m_{\S \T} = \sum_{\mathfrak{s}, \mathfrak{t}} m_{\mathfrak{s}\mathfrak{t}},
\end{align*}
where the sum is over all pairs $(\mathfrak{s},\mathfrak{t})$ of standard $\omega$-tableaux such that $\lambda(\mathfrak{s}) = \S$ and $\mu(\mathfrak{t})= \T$.  A basis of $\Hom_{\Sigma_r}(M(\lambda), M(\mu))$ is given by the maps:
\begin{align*}
\varphi_{\S\T} :& M(\lambda) \to M(\mu) \\
			& x_\lambda \longmapsto m_{\S\T},
\end{align*}
which are indexed by pairs of semistandard tableaux $\S \in  T_0(\omega,\lambda )$ and $\T \in T_0(\omega, \mu)$.  

Define the Schur algebra $S(r)$ to be the endomorphism algebra  $\End_{\Sigma_r}(\oplus_{\lambda \vdash r} M(\lambda))$.   
We trivially extend the domain of these homomorphisms to be elements of $S(r)$ and define the ideal $S^\omega(r)$ to be the $K$-module spanned by $\varphi_{\S\T}$ such that $\S, \T \in T_0(\alpha)$ for some $\alpha \rhd \omega$. 
For the proof of the following theorem (in the full generality of cyclotomic Hecke and $q$-Schur algebras) we refer to \cite{DJM}.
\begin{thm}\label{cellbasisofschur}
The Schur algebra $S(r)$ is a free $K$-module with basis:
\begin{align*}
\{ \varphi_{\S\T} : \omega \vdash r \text{ and } \S, \T \in T_0(\omega)\}.
\end{align*}
Moreover:
\begin{itemize}
\item The map $\ast : \varphi_{\S\T} \to \varphi_{\T\S}$ is an anti-automorphism of $S(r)$.
\item Suppose that $\omega \vdash r $ and that $\S$ is a semistandard $\omega$-tableau.  Then for all $\varphi  \in S(r)$ there exist $k_{\V} \in K$ such that for all $\T \in T_0(\omega)$
\begin{align*}
\varphi_{\S\T}\circ \varphi  =\sum_{\V \in T_0 (\lambda)}k_\V \varphi_{\S\V} \quad \text{\rm{mod}-}S^\omega(r).
\end{align*}
\end{itemize}
Consequently, this is a cellular basis of the Schur algebra.
\end{thm}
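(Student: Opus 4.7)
The plan is to lift the Murphy cellular basis of each permutation module to produce a cellular basis of the endomorphism algebra, mirroring the Dipper--James--Mathas strategy.

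First I verify that $\varphi_{\S\T}$ is well-defined as an element of $\Hom_{\Sigma_r}(M(\lambda),M(\mu))$. Since $M(\lambda)$ is cyclic with distinguished generator $x_\lambda$, such a map is determined by the image of that generator, subject to $\Sigma_\lambda$-invariance. Now $m_{\S\T}$ is the sum of Murphy basis elements $m_{\mathfrak{s}\mathfrak{t}}$ over all pairs $(\mathfrak{s},\mathfrak{t})$ of standard $\omega$-tableaux with $\lambda(\mathfrak{s})=\S$ and $\mu(\mathfrak{t})=\T$; the indexing set $\{\mathfrak{s}:\lambda(\mathfrak{s})=\S\}$ is a full $\Sigma_\lambda$-orbit of standard tableaux, so summing over it produces the required invariance.

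Second I show that these maps span a basis of $\Hom_{\Sigma_r}(M(\lambda),M(\mu))$. By the standard adjunction this Hom space is identified with the $\Sigma_\lambda$-fixed vectors of $M(\mu)$. The Murphy basis of $M(\mu)$ (the theorem stated above) decomposes into $\Sigma_\lambda$-orbits which are indexed precisely by the semistandard $\omega$-tableaux of type $\lambda$; the orbit sums give the $m_{\S\T}$, and these are linearly independent because the underlying Murphy basis is. Assembling across all $\lambda,\mu\vdash r$ yields the claimed basis of $S(r)$; a parallel count via Green--Dipper--James (Theorem \ref{DJbasisofschur}) and the Robinson--Schensted--Knuth correspondence between semistandard pairs and double cosets in $\mathcal{D}_{\lambda\mu}$ provides a cross-check of the cardinality.

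Finally I verify the cellular datum. The anti-automorphism identity $\varphi_{\S\T}^\ast = \varphi_{\T\S}$ is inherited termwise from $m_{\mathfrak{s}\mathfrak{t}}^\ast=m_{\mathfrak{t}\mathfrak{s}}$, and is therefore immediate from the definitions. The substantive content, and in my view the main obstacle, is the cellular multiplication rule modulo $S^\omega(r)$. The strategy is to evaluate $\varphi_{\S\T}\circ\varphi$ on the generator $x_\lambda$, which reduces the question to the expansion of $m_{\S\T}$ multiplied by a group-algebra element controlled by $\varphi$. The Murphy cellularity of $K\Sigma_r$ itself then expresses this product in the Murphy basis with leading coefficients independent of $\S$ and with an error supported on basis elements indexed by tableau shapes strictly dominating $\omega$. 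Regrouping the resulting standard-tableau sums back into semistandard orbit sums --- exactly the packaging used in the first step --- converts the expansion into the required linear combination of $\varphi_{\S\V}$, while the dominating-shape error lies in $S^\omega(r)$ by construction of the ideal. The independence of the coefficients from $\S$ is inherited directly from the corresponding Murphy cellularity axiom, closing the argument.
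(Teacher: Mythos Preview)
The paper does not prove this theorem at all: immediately before the statement it writes ``For the proof of the following theorem (in the full generality of cyclotomic Hecke and $q$-Schur algebras) we refer to \cite{DJM}.'' So there is no proof in the paper to compare against; the result is simply imported from Dipper--James--Mathas.

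Your proposal is essentially a sketch of the DJM argument, and in broad outline it is correct: one lifts the Murphy cellular structure on $K\Sigma_r$ to the endomorphism algebra by packaging standard tableaux into semistandard orbit sums. One phrasing to tighten: the set $\{\mathfrak{s}\in\Std(\omega):\lambda(\mathfrak{s})=\S\}$ is not literally a $\Sigma_\lambda$-orbit, since $\Sigma_\lambda$ does not preserve standardness. What is true (and what DJM use) is that the corresponding $d(\mathfrak{s})$ run over a set of distinguished coset representatives, so that $\sum_{\lambda(\mathfrak{s})=\S} d(\mathfrak{s})x_\omega$ lies in $x_\lambda K\Sigma_r$; this is the precise statement that makes $\varphi_{\S\T}$ well-defined and that underlies your ``orbit-sum'' regrouping in the final step. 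With that adjustment your sketch matches the cited source.
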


\section{The Brauer algebra}

In this section we give an example-led review of the representation theory of the Brauer algebra, $B_K(r,\delta)$ (sometimes denoted $B_r$).  
 Our introduction is based on \cite{Row}. 

We introduce modified tableaux and use these to derive the Murphy basis and `Specht' filtration of  a permutation module and to give bases of their endomorphism algebras.  
\subsection{Definitions and examples}\label{EXAMbrauer}
Let $A=B_K(r,\delta)$ be the classical Brauer algebra. The algebra, $B_K(r,\delta)$, has as a $K$-basis the set of all diagrams consisting of a row of $r$ \emph{northern} vertices and a row of $r$ \emph{southern} vertices,  with each vertex joined to exactly one other vertex by an edge.  
A vertical edge identifying a northern vertex to a southern vertex is called a \emph{through-line} and a horizontal edge is called an \emph{arc}.

Multiplication of two diagrams $x$ and $y$ is defined by concatenation; the bottom row of $x$ is identified with the top row of $y$, following the edges from a vertex on the top row of $x$ to the bottom row of $y$ identifies a new basis element $z$.  We let $j$ denote the number of closed loops in the middle.  We then define the multiplication by $x \cdot  y = \delta^j z$.  This defines a generically semisimple algebra over the complex numbers.  Non-semisimple representations can occur over modular fields or upon specialisation of  $\delta$ to an integer.
 
  In \cite{HHKP} the Brauer algebra was shown to be cellularly stratified with inflation decomposition,
  \begin{align*}
B_K(r,\delta) = \oplus_{l} V_l \otimes V_l \otimes \Sigma_{r-2l}.
\end{align*}
where $V_l$ is the vector space of all possible configurations of $l$ arcs, and the symmetric groups provide the through-lines.
If $\delta \neq 0$ then we define $e_l$ to be $1/\delta^l$ times the element
\begin{align*}
\begin{minipage}{54mm}
\def\objectstyle{\scriptstyle}
\xymatrix@=1pt{
&& \circ 			&&\cdots	&&\circ  			&&\circ 		&&\circ  \ar@{-}[ll]	&&\cdots	&&\circ 	 	&&\circ\ar@{-}[ll]  \\	&\\
&&\circ \ar@{-}[uu] 	&&\cdots	&&\circ  \ar@{-}[uu] 	&&\circ 		&&\circ  \ar@{-}[ll]	&&\cdots	&&\circ 	 	&&\circ \ar@{-}[ll]
}
\end{minipage}&, 
\end{align*}
with $r-2l$ straight through-lines and $l$ arcs, each joining two consecutive nodes of the final $2l$ nodes.  If $\delta=0$ and $l\neq r/2$, then one can define alternative idempotents which satisfy the necessary conditions (see \cite[Section 2.2]{HHKP}).

\begin{prop}[Proposition 1.1 of \cite{DT}]\label{A filtration of induction for Brauer}
   Let $r=t+2l=s+2m$, let $m-l=i\in \mathbb{Z}$, and assume without loss of generality that $i\geq0$. 
   Let ${^mV^l}$ denote the $K\Sigma_{s}-K\Sigma_{t}$-bimodule  $e_m(A/Ae_{l+1}A)e_l$.
We have that 
\begin{align*}
{^lV^m}\otimes_{\Sigma_{s}} - : \Sigma_{s}{\text{\rm -mod}} \xrightarrow{\rm inflation} &\Sigma_{s} \times \Sigma_2 \wr \Sigma_i {\text{\rm -mod}} \xrightarrow{\rm induction}\Sigma_{t}{\text{\rm -mod}} \\
M \longmapsto &M \boxtimes K \longmapsto (M \boxtimes K)\! \uparrow_{\Sigma_{s} \times \Sigma_2 \wr \Sigma_i }^{\Sigma_{t}}
\end{align*}
where the first functor is the standard inflation from the quotient of the group $\Sigma_{t} \times \Sigma_2 \wr \Sigma_i$ by the normal subgroup $\Sigma_2 \wr \Sigma_i$.  Similarly,  we have that
\begin{align*}
{^mV^l}\otimes_{\Sigma_{t}} - : \Sigma_{t}{\text{\rm -mod}}& \xrightarrow{\rm restriction} \Sigma_{s} \times \Sigma_2 \wr \Sigma_i {\text{\rm -mod}} \xrightarrow{\rm projection}\Sigma_{s}{\text{\rm -mod}} \\
&N \longmapsto N\!\! \downarrow_{\Sigma_{s} \times \Sigma_2 \wr \Sigma_i} \longmapsto N/\{ n - hn: h \in \Sigma_2 \wr\Sigma_i\}
\end{align*}and we shall let $e_{l+i}N$ denote the image of $N$ under the localisation map.  
Furthermore, $$J_{l,i}/J_{l,i+1}\otimes_{K\Sigma_{t}} M \simeq J_{l+i,0}/J_{l+i,1}\otimes_{K\Sigma_{t-2i}}(e_iM).$$ 
\end{prop}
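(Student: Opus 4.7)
The plan is to prove all three assertions together by a diagrammatic basis analysis of ${}^{l}V^{m}$, combined with the standard idempotent factorisation $Ae_{l+i}A\cong Ae_{l+i}\otimes_{e_{l+i}Ae_{l+i}}e_{l+i}A$ that is available for a cellularly stratified algebra.

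The first step is to describe a basis of ${}^{l}V^{m}=e_l(A/Ae_{m+1}A)e_m$. A Brauer diagram surviving in this quotient has at most $m$ arcs; since $e_l$ on the left forces at least $l$ arcs on the top, $e_m$ on the right forces at least $m$ arcs on the bottom, and the number of top arcs equals the number of bottom arcs, every surviving diagram has exactly $m$ arcs. These consist of the $l$ arcs at the top imposed by $e_l$, a further $i=m-l$ arcs among the $t=r-2l$ free top positions, and the $m$ arcs at the bottom imposed by $e_m$; the remaining $s=r-2m$ free top positions are matched bijectively, via an element of $\Sigma_s$, to the $s$ free bottom positions. Fixing the base diagram $d_0=e_m\in e_lAe_m$, whose extra top arcs occupy the consecutive positions $s+1,\dots,s+2i$ exactly as in the defining picture of $e_m$, I would read off the $K\Sigma_t$-$K\Sigma_s$-bimodule isomorphism
\[
{}^{l}V^{m}\;\cong\;K\Sigma_t\otimes_{K(\Sigma_s\times\Sigma_2\wr\Sigma_i)}\bigl(K\Sigma_s\boxtimes K\bigr),
\]
where $\Sigma_s\times\Sigma_2\wr\Sigma_i$ embeds in $\Sigma_t$ as the stabiliser of the top configuration of $d_0$: the factor $\Sigma_2\wr\Sigma_i$ permutes the extra arc-endpoints fixing $d_0$ pointwise, while $\Sigma_s\leqslant\Sigma_t$ permutes the free top through-line endpoints in a way compensated by the right $\Sigma_s$-action on the matching. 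Tensoring on the right with $N\in K\Sigma_s\text{-mod}$ then yields exactly the inflation of $N$ (trivial on $\Sigma_2\wr\Sigma_i$) followed by induction to $\Sigma_t$. The dual formula for ${}^{m}V^{l}\otimes_{K\Sigma_t}-$ follows either by repeating the basis argument with top and bottom interchanged or by applying the anti-involution of the Brauer algebra, which swaps ${}^{l}V^{m}$ and ${}^{m}V^{l}$ and converts induction into restriction followed by the coinvariant quotient.

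For the ``furthermore'' identity I would use the factorisation
\[
J_{l,i}/J_{l,i+1}\;=\;(Ae_{l+i}A/Ae_{l+i+1}A)e_l\;\cong\;Ae_{l+i}\otimes_{e_{l+i}Ae_{l+i}}{}^{l+i}V^{l},
\]
which follows from $Ae_{l+i}A\cong Ae_{l+i}\otimes_{e_{l+i}Ae_{l+i}}e_{l+i}A$ on passing to the appropriate quotient. Tensoring over $K\Sigma_t$ with $M$ and invoking the first part to identify ${}^{l+i}V^{l}\otimes_{K\Sigma_t}M\cong e_iM$ as a $K\Sigma_s$-module gives
\[
J_{l,i}/J_{l,i+1}\otimes_{K\Sigma_t}M\;\cong\;Ae_{l+i}\otimes_{e_{l+i}Ae_{l+i}}(e_iM).
\]
The same factorisation applied to $J_{l+i,0}/J_{l+i,1}=(Ae_{l+i}A/Ae_{l+i+1}A)e_{l+i}$ (the case $i=0$ of the preceding step, using ${}^{l+i}V^{l+i}\cong B_{l+i}=K\Sigma_s$) yields $J_{l+i,0}/J_{l+i,1}\otimes_{K\Sigma_s}(e_iM)\cong Ae_{l+i}\otimes_{e_{l+i}Ae_{l+i}}(e_iM)$, completing the isomorphism. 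I expect the main technical obstacle to be the factorisation $Ae_{l+i}A\cong Ae_{l+i}\otimes_{e_{l+i}Ae_{l+i}}e_{l+i}A$ in the degenerate case $\delta=0$, where the standard idempotents must be replaced by the alternative ones of \cite[Section~2.2]{HHKP}; one needs to verify that both the combinatorial basis count and the factorisation remain valid in that setting.
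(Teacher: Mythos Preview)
Your argument for the bimodule description of ${}^{l}V^{m}$ is essentially identical to the paper's: both identify the left $\Sigma_t$-action as transitive with stabiliser $\Sigma_s\times\Sigma_2\wr\Sigma_i$ and the right $\Sigma_s$-action as regular, whence the inflation--induction description and, dually, the restriction--coinvariants description.

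For the ``furthermore'' identity the two arguments diverge in style. The paper proceeds concretely: it observes that any diagram $u\otimes v\otimes b\in J_{l,i}/J_{l,i+1}$ factors as $\delta^{-(l+i)}(u\otimes\epsilon_{l+i}\otimes b)(\epsilon_{l+i}\otimes v\otimes\id_{K\Sigma_{t-2i}})$ with the two factors lying in $J_{l+i,0}/J_{l+i,1}$ and ${}^{l+i}V^{l}$ respectively, the ambiguity in this decomposition being exactly a permutation of the $t-2i$ through-lines. You instead invoke the abstract stratifying-ideal factorisation $\bar A e_{l+i}\bar A\cong \bar A e_{l+i}\otimes_{e_{l+i}\bar A e_{l+i}}e_{l+i}\bar A$ in $\bar A=A/J_{l+i+1}$, then multiply by $e_l$ and tensor with $M$. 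This is correct and is really the same factorisation packaged functorially; the paper's diagrammatic splitting is precisely what verifies that the multiplication map is bijective. Your version has the advantage of being visibly transportable to the walled Brauer and partition settings without redrawing diagrams; the paper's version makes the $\delta^{-(l+i)}$ normalisation, and hence the $\delta=0$ obstruction, explicit. One small imprecision: in your display you write $Ae_{l+i}\otimes_{e_{l+i}Ae_{l+i}}{}^{l+i}V^{l}$, but since ${}^{l+i}V^{l}$ is a module for $e_{l+i}\bar Ae_{l+i}\cong K\Sigma_{t-2i}$ rather than for $e_{l+i}Ae_{l+i}$, the left factor should really be $(A/J_{l+i+1})e_{l+i}=J_{l+i,0}/J_{l+i,1}$ from the outset; the identification $Ae_{l+i}\otimes_{e_{l+i}Ae_{l+i}}K\Sigma_{t-2i}\cong (A/J_{l+i+1})e_{l+i}$ you implicitly use is true but deserves a word.
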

\begin{proof}
The left-action of $\Sigma_t$ on ${^lV^m}$ is given by the transitive permutation of the first $t$ nodes in a diagram, with stabiliser $\Sigma_s \times \Sigma_2 \wr \Sigma_i$; as a right $\Sigma_s$-module, ${^lV^m}$ is isomorphic to the group algebra $\Sigma_s$.   Therefore, the globalisation functor obtained by the tensor product with this bi-module is given by inflation from $\Sigma_s$ to $\Sigma_s \times\Sigma_2 \wr \Sigma_i$ composed with induction to $\Sigma_t$.  The adjoint functor is constructed similarly.

  
We now consider the filtration of the induction functor. The subquotient $ J_{l,i}/J_{l,i+1}$ is spanned by all diagrams with southern row that consists of $l$ arcs each joining two consecutive of the last $2l$ nodes, and exactly $i$ other arcs.  Any $u \otimes v \otimes b \in J_{l,i}/J_{l,i+1}$ can be rewritten as the product $\delta^{-(l+i)}(u \otimes \epsilon_{l+i} \otimes b)(\epsilon_{l+i} \otimes v \otimes  \id_{K\Sigma_{t-2i}})$; note that the choice of $u \otimes \epsilon_{l+i} \otimes b\in J_{l+i,0}/J_{l+i,1}$ and $\epsilon_{l+i} \otimes v \otimes  \id_{K\Sigma_{t-2i}}\in {^{l+i}V^l}$ is unique up to a permutation of the  $t-2i$ through-lines, i.e,
$J_{l,i}/J_{l,i+1}\otimes_{K\Sigma_{t}} M \simeq J_{l+i,0}/J_{l+i,1}\otimes_{K\Sigma_{t-2i}}(e_{l+i}M).$
 \end{proof}
\begin{eg}
We consider as an example $B_K(3,\delta)$ over a field of characteristic zero.  For $B_K(3,\delta)$ there are the three conjugacy classes represented by elements of the symmetric group and two classes represented by elements with one arc, one where the top and bottom rows match, and one where they do not.
\begin{align*}
x=\begin{minipage}{34mm}
\def\objectstyle{\scriptstyle}
\xymatrix@=1pt{
\circ 		&&\circ 	\ar@{-}[ll]	&&\circ\ar@{-}[dd]	&\\
&\\
\circ 	\ar@{-}[rr]	&&\circ 		&&\circ
}
\end{minipage} \sim 
\begin{minipage}{34mm}
\def\objectstyle{\scriptstyle}
\xymatrix@=1pt{
&\circ 	\ar@{-}[dd]	&&\circ 	\ar@{-}[rr]	&&\circ	&\\ &\\
&\circ 		&&\circ 	\ar@{-}[rr]	&&\circ
}
\end{minipage},  \quad \quad
y=\begin{minipage}{34mm}
\def\objectstyle{\scriptstyle}
\xymatrix@=1pt{
&\circ 	&&\circ 	\ar@{-}[ll]		&&\circ\ar@{-}[ddllll]	&\\ &\\
&\circ 	&&\circ 	\ar@{-}[rr]	&&\circ
}
\end{minipage} \sim
\begin{minipage}{34mm}
\def\objectstyle{\scriptstyle}
\xymatrix@=1pt{
&\circ \ar@{-}[ddrrrr]		&&\circ 	\ar@{-}[rr]	&&\circ	\\
&\\&\circ 	\ar@{-}[rr]	&&\circ 		&&\circ
}
\end{minipage} ,
\end{align*}
Note that there are three elements conjugate to $x$ and six elements conjugate to $y$.  The standard module $\Delta((1),1)$ has basis $$
\left\{
\begin{minipage}{34mm}
\def\objectstyle{\scriptstyle}
\xymatrix@=1pt{
\circ \ar@{-}[rr]	&&\circ 	&&\circ	\\ & \\
\circ \ar@{-}[uurrrr]	&&\circ  \ar@{-}[rr]		&&\circ	
}
\end{minipage} ,
\begin{minipage}{34mm}
\def\objectstyle{\scriptstyle}
\xymatrix@=1pt{
\circ &&\circ 	&&\circ	\ar@{-}[ll]	\\ & \\
\circ \ar@{-}[uu]	&&\circ  \ar@{-}[rr]		&&\circ	
}
\end{minipage} ,
\begin{minipage}{34mm}
\def\objectstyle{\scriptstyle}
\xymatrix@=1pt{
\circ  \ar@{-}@/_.8pc/[rrrr]	&&\circ 	&&\circ\\ & \\
\circ \ar@{-}[uurr]	&&\circ  \ar@{-}[rr]		&&\circ		
}
\end{minipage}\right\}.$$ 
The quotient $A/Ae_1A\simeq \Sigma_3$ acts by permuting these basis elements.  The elements $x$ and $y$ act as follows:
\begin{align*}
x=\left(\begin{array}{ccc}\delta & 1 & 1 \\0 & 0 & 0 \\0 & 0 & 0\end{array}\right) \text{ and } y= \left(\begin{array}{ccc}1 & \delta & 1 \\0 & 0 & 0 \\0 & 0 & 0\end{array}\right).
\end{align*}
Therefore the character table (see \cite{RAM}) for the generic algebra is as follows:
\begin{align*}
\begin{array}{c|ccccc} & e & (12)	&(123) & \	x	\ & \ y\			
 \\ \hline \Delta((3),0) & 1 & 1 & 1&0&0 \\ 
 \Delta((2,1),0) & 2 & 0 &-1&0&0 \\ 
 \Delta((1^3),0) & 1 & -1&1&0 & 0\\
  \Delta((1),1) & 3 & 1 &0&\delta&1  \end{array}.
\end{align*}

From the matrix representations of $x$ and $y$ we deduce that upon specialisation of $\delta$ to $-2$ that $x$ and $y$ act as zero on $\Span_K\{v_1 + v_2 +v_3\}$.  This gives rise to an embedding of the trivial module $ \Delta((3),0)\hookrightarrow  \Delta((1),1) $.  

Similarly, upon specialisation of $\delta$ to $1$ we have that $x$ and $y$ act as zero on the subspace $\Span_K\{v_1-v_2,v_2-v_3\}$.  This gives rise to an embedding $ \Delta((2,1),0)\hookrightarrow  \Delta((1),1) $.
These are the only non-semisimple specialisations in characteristic zero. 

We have that $M((1),1)=B_K(3,\delta)e_1\otimes_{K\Sigma_1}K \simeq \Delta((1),1)$ by the above.  We can now calculate the character of $M((3),0)$ using Frobenius reciprocity:
\begin{align*}
\Hom_{B_K(3,\delta)}(M((3),0),M((1),1))	
&\simeq\Hom_{\Sigma_3}(\mathcal{S}(3), \Res_{K\Sigma_3}^{B_K(3,\delta)} \Delta((1),1),\\
&\simeq\Hom_{\Sigma_3}(\mathcal{S}(3),\mathcal{S}(3) \oplus \mathcal{S}(2,1) ) 
\simeq K.
\end{align*}

The second equality was deduced from the character table.  Therefore as a generic $B_K(3,\delta)$-module $M((3),0)\simeq\Delta((3),0) \oplus \Delta((1),1)$.   There is therefore a unique homomorphism from $M((3),0)$ to $M((1),1)$.   In Section \ref{frob1} we shall describe this homomorphism explicitly in terms of basis elements.

We wish to describe the structure of the module $M((3),0)$, in the case that $\delta=-2$.
We have that $\mathcal{S}(3)$ is a projective $\Sigma_3$-module, and so $M((3),0)$ is a projective $B_\mathbb{C}(3,-2)$-module.  
We have already seen that $0\to \Delta((1),1) \to M((3),0) \to\Delta((3),0) \to 0$. 
As $M((3),0)$ is projective, we conclude that the sequence is non-split and that $M((3),0)$ is the uniserial module $[L((1),1), L((3),0), L((1),1)]$.


\end{eg}

\subsection{Modified tableaux}\label{mod brauer}
In the case of the Brauer algebra we shall always take $r= t+ 2l= s+2m$, and $\lambda \vdash t$, $\mu \vdash s$.  
 In this section we shall only need to consider the case of $m-l \geq 0$ (as homomorphisms occur from higher layers to lower layers by \cite[Corollary 7.4]{HHKP}).  We let $i=m-l$.

In the case of the Brauer algebra we shall need to index homomorphisms using a set of double coset representatives, $
\mathcal{D}_{\lambda\downarrow,\mu}$, of  $\Sigma_{\lambda} \backslash \Sigma_t / \Sigma_{\mu} \times \Sigma_2\wr\Sigma_i$.

To define the image of a homomorphism we will require a set of left coset representatives for $\Sigma_t / \Sigma_\nu$, where $\nu\vDash r$ corresponds to the subgroup $\Sigma_\lambda  \ \cap \ d(\Sigma_\mu    \times \Sigma_2\wr\Sigma_i)d^{-1} $ of $\Sigma_t$ for $d \in \mathcal{D}_{\lambda\downarrow,\mu}$.  This group will be a direct product of a Young subgroup and a series of hyperoctahedral groups.

A composition $\lambda \vDash t$ can be illustrated by a \emph{Young diagram}, $[\lambda]$, consisting of $t$ nodes  placed in rows.  The $j^{th}$ row of $[\lambda]$ consists of $\lambda^j$ nodes, and all the rows start in the same column.  

A \emph{modified diagram} arises from a pair  $(\lambda, i)$ by taking the Young diagram $[\lambda]$ and connecting $i$ pairs of nodes from the diagram, we shall also call these \emph{$(\lambda,i)$-diagrams}. 

Define an equivalence relation on modified diagrams by neglecting the order of the points in the row.  
A \emph{row-standard modified diagram} is defined to be any representative of an equivalence class under this relation.
We shall refer to the set of all diagrams in an equivalence class of the diagram $\sigma$ as the \emph{coset} of $\sigma$.  We shall let $\{\sigma\} \in \sigma$ denote an element of the coset and let $[\sigma]$ denote the sum over all elements of the coset.
We let ${T}^i_\lambda$ denote the set of all row-standard $(\lambda,i)$-diagrams (we sometimes suppress the $i$ and write ${T}^i_\lambda$).  For example,
\begin{align*}
\left\{
\begin{minipage}{34mm}
\def\objectstyle{\scriptstyle}
\xymatrix@=6pt{
   \ast  &	\ast	 	\\
\ast \ar@{-}[u] 	}
\end{minipage} 
\right\} + \left\{
\begin{minipage}{34mm}
\def\objectstyle{\scriptstyle}
\xymatrix@=6pt{
   \ast  &	\ast		\\
\ast \ar@{-}[ur] 	}
\end{minipage} 
\right\}
\ = \ 
\left[\begin{minipage}{34mm}
\def\objectstyle{\scriptstyle}
\xymatrix@=6pt{
 \ast  &	\ast		\\
 \ast \ar@{-}[u]    	 
}
\end{minipage} \right]
	\quad	\text{ and } \quad 
\left\{
\begin{minipage}{34mm}
\def\objectstyle{\scriptstyle}
\xymatrix@=6pt{
 \ast \ar@{-}[r]  &	\ast		\\
 \ast    	 
} \end{minipage}
\right\} =
\left[\begin{minipage}{34mm}
\def\objectstyle{\scriptstyle}
\xymatrix@=6pt{
 \ast \ar@{-}[r]  &	\ast		\\
 \ast    	 
} \end{minipage}\right]  
\end{align*}
are the coset sums of the two elements of $T^1_{(2,1)}$.  

A $(\lambda, i)$-\emph{tableau} arises from a row-standard $(\lambda, i)$-diagram by replacing the unconnected nodes with the numbers $\{1, \ldots, t-2i\}$.  For example\begin{align*}
\begin{minipage}{64mm}
\def\objectstyle{\scriptstyle}
\xymatrix@=6pt{
 1  & 2  &	\ast	&&	\\
 3  &\ast \ar@{-}[ur] 		&&	\\
 4			}
\end{minipage} 
&\begin{minipage}{64mm}
\def\objectstyle{\scriptstyle}
\xymatrix@=6pt{
 1  & 4  &	3	&&\\
 2  &\ast \ar@{-}[dl] 		&&	\\
\ast			}
\end{minipage} 
\begin{minipage}{64mm}
\def\objectstyle{\scriptstyle}
\xymatrix@=6pt{
 3  & 2  &1	&	\\
 \ast  &\ast \ar@{-}[l] 		&	\\
 4			}
\end{minipage} \text{ are all $((3,2,1),1)$-tableaux.}
\intertext{\indent A \emph{row-standard} $(\lambda, i)$-\emph{tableau} is given by replacing the unconnected nodes with the numbers $\{1, \ldots, t-2i\}$, such that the numbers increase along the rows when read from left to right.  For example,}
&\begin{minipage}{64mm}
\def\objectstyle{\scriptstyle}
\xymatrix@=6pt{
1 & 2  &3	&	\\
 \ast  &\ast \ar@{-}[l] 		&	\\
 4			}
\end{minipage} \text{ is a row-standard $((3,2,1),1)$-tableaux.}
\end{align*}

 For a given row-standard diagram, $\sigma \in T_\lambda^i$, we define the \emph{restricted diagram}, $\lambda \!\!\downarrow_\sigma$, to be the composition of $t-2i$ obtained from $\lambda$ by deleting the connected nodes.  For example, there are exactly two row-standard $((3,1),1)$-diagrams
\begin{align*}
\sigma_1 = \begin{minipage}{64mm}
\def\objectstyle{\scriptstyle}
\xymatrix@=6pt{
 \ast  &  \ast  &	\ast		\\
 \ast \ar@{-}[u]   		}
\end{minipage} &\text{ and }
\sigma_2 = \begin{minipage}{64mm}
\def\objectstyle{\scriptstyle}
\xymatrix@=6pt{
 \ast  &  \ast \ar@{-}[r] &	\ast		\\
 \ast		}
\end{minipage}, 
\intertext{which correspond to the restricted diagrams}
\lambda\!\downarrow_{\sigma_1} \ = \ \begin{minipage}{34mm}
\def\objectstyle{\scriptstyle}
\xymatrix@=6pt{
 \ast  &	\ast	&		}
\end{minipage}   &\text{ and }
\lambda\!\downarrow_{\sigma_2} \ =\  \begin{minipage}{34mm} 
\def\objectstyle{\scriptstyle} 
\xymatrix@=6pt{
 \ast  \\	\ast			}
\end{minipage} \ .
\end{align*}
 
 The restricted tableaux are defined  similarly.  We define a dominance order on modified diagrams as follows. For $\sigma \in T^i_\lambda$ and $\tau \in T^j(\mu)$ we say that $\sigma \rhd \tau$ if and only if either: $|\lambda\!\downarrow_\sigma\!\!|<|\mu\!\downarrow_\tau\!\!|$, or $|\lambda\!\downarrow_\sigma\!\!|=|\mu\!\downarrow_\tau\!\!|$ and $ \mu\!\downarrow_\tau		\! \lhd_{t-2i} 		\lambda\!\downarrow_\sigma$.

For $\lambda \vdash t$ and $\omega \vdash t-2i$, we define a (semistandard) $\omega$-tableaux of type $(\lambda,i)$ to be a pair $\sigma \in T^i_\lambda$ and $\S \in T(\omega,\lambda\!\downarrow_\sigma)$ (respectively $\S \in T_0(\omega,\lambda\!\downarrow_\sigma)$), which we denote by $\S^\sigma$.   The set of all (semistandard) $\omega$-tableaux of type $(\lambda, i)$ is denoted $T^i(\omega, \lambda)$ (respectively $T_0^i(\omega, \lambda)$).  We let $T^\ast_0(\lambda)$ denote the union $\cup_iT_0^i(\omega, \lambda)$.
There is a unique $\lambda$-tableau of type $(\lambda,i)$ --- this is the tableau where $i=0$ and where the $i$th row consists of $\lambda_i$ copies of the integer $i$, we denote this tableau by $\T^\lambda$.

 \label{tabber}
 In the classical set-up, pairs of row-standard tableaux provide sets of (double) coset representatives of Young subgroups (through $\mathcal{D}_{\lambda,\mu}$).  The following lemma illustrates the connection between  modified tableaux and the double cosets in which we are interested.

\begin{lem}\label{ASETOFBRAUERCOSETS}
 
A set of double coset representatives for $\Sigma_{\lambda} \backslash \Sigma_t / \Sigma_{\mu} \times \Sigma_2\wr\Sigma_i$ is given by $\mathcal{D}_{\lambda\downarrow, \mu}^i = \{ \sigma \otimes \epsilon_{l+i} \otimes d_{\lambda\downarrow_\sigma, \mu} : \sigma \in T^i_\lambda, d_{\lambda\downarrow_\sigma, \mu} \in \mathcal{D}_{\lambda\downarrow_\sigma, \mu}\}$.

A set of coset representatives for $\Sigma_\lambda/(\Sigma_\lambda \cap(  \Sigma_{\mu} \times \Sigma_2\wr\Sigma_i))$ is given by 
$\mathcal{D}^i_\mu=\{\sigma \otimes \epsilon_{l+i} \otimes d_{\mu} : \sigma \in T^i_{\lambda}, d_{\mu} \in \mathcal{D}_\mu \cap \Sigma_{\lambda\downarrow_\sigma} \}.$   
\end{lem}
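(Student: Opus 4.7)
The plan is to reduce both statements to the classical double-coset theory for Young subgroups reviewed in Section~\ref{distinguish}, exploiting that $\Sigma_{\mu}\times\Sigma_{2}\wr\Sigma_{i}$ is the stabiliser inside $\Sigma_{t}$ of a natural combinatorial structure which factors into an \emph{arc} part, stabilised by $\Sigma_{2}\wr\Sigma_{i}$, and an independent \emph{row-composition} part, stabilised by $\Sigma_{\mu}$.

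For the first statement I would identify the coset space $\Sigma_{t}/(\Sigma_{\mu}\times\Sigma_{2}\wr\Sigma_{i})$ with the $\Sigma_{t}$-set $X$ consisting of pairs: a choice of $i$ disjoint unordered $2$-element subsets of $\{1,\ldots,t\}$ (the ``arcs''), together with an ordered partition of the remaining $t-2i$ letters of shape $\mu$. The double-coset space $\Sigma_{\lambda}\backslash\Sigma_{t}/(\Sigma_{\mu}\times\Sigma_{2}\wr\Sigma_{i})$ is then the orbit space $\Sigma_{\lambda}\backslash X$, which I would enumerate in two stages. First, forgetting the $\mu$-partition, I claim that the $\Sigma_{\lambda}$-orbits on arc configurations correspond bijectively to $T^{i}_{\lambda}$: by definition a row-standard $(\lambda,i)$-diagram is precisely an arc configuration on $[\lambda]$ taken up to permutation of nodes within each row, which is exactly the $\Sigma_{\lambda}$-action. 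Second, for each such $\sigma$ the residual action of $\Stab_{\Sigma_{\lambda}}(\sigma)$ on the unconnected nodes is that of $\Sigma_{\lambda\downarrow_{\sigma}}\le\Sigma_{t-2i}$, so the remaining $\mu$-partition orbits are classified by $\Sigma_{\lambda\downarrow_{\sigma}}\backslash\Sigma_{t-2i}/\Sigma_{\mu}$, with distinguished representatives $\mathcal{D}_{\lambda\downarrow_{\sigma},\mu}$ by the classical Green--Dipper--James theory. Concatenating the two stages recovers the set $\mathcal{D}^{i}_{\lambda\downarrow,\mu}$; the tensor factor $\epsilon_{l+i}$ simply records the choice of the standard adjacent pairing as representative within each arc-orbit.

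The second statement is then proved by the analogous orbit-stabiliser analysis, now tracking single cosets rather than double cosets. For each fixed $\sigma\in T^{i}_{\lambda}$ the stabiliser $\Stab_{\Sigma_{\lambda}}(\sigma)$ splits as a direct product of $\Sigma_{\lambda\downarrow_{\sigma}}$ with a hyperoctahedral wreath-product subgroup permuting arc endpoints and pairs of rows symmetrically -- this is exactly the ``direct product of a Young subgroup and a series of hyperoctahedral groups'' anticipated in the paragraph preceding the lemma. Intersecting with $\Sigma_{\mu}$ and applying the classical single-coset description yields $\mathcal{D}_{\mu}\cap\Sigma_{\lambda\downarrow_{\sigma}}$ as representatives of $\Sigma_{\lambda\downarrow_{\sigma}}/(\Sigma_{\mu}\cap\Sigma_{\lambda\downarrow_{\sigma}})$. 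Summing contributions over $\sigma\in T^{i}_{\lambda}$ absorbs the remaining arc-orbit freedom and assembles into $\mathcal{D}^{i}_{\mu}$.

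The main technical obstacle, and the step that most merits care, is the bijection between $T^{i}_{\lambda}$ and $\Sigma_{\lambda}$-orbits of arc configurations on $[\lambda]$, together with the accompanying identification of the arc stabiliser as a hyperoctahedral wreath product. Once this combinatorial bookkeeping is secured, both halves of the lemma follow by a direct appeal to the Young-subgroup coset machinery from Section~\ref{distinguish}.
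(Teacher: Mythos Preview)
Your proposal is correct and follows essentially the same route as the paper: both identify $\Sigma_t/(\Sigma_{t-2i}\times\Sigma_2\wr\Sigma_i)$ with the $\Sigma_t$-set of arc configurations via orbit--stabiliser, recognise $T^i_\lambda$ as the set of $\Sigma_\lambda$-orbits thereon, and then invoke the classical Young-subgroup coset machinery to handle the residual $\Sigma_\mu$ part. The only organisational difference is that the paper first treats the case $\mu=(t-2i)$ and then passes to general $\mu$ via a fibre argument, whereas you work directly with the paired set $X$ and enumerate $\Sigma_\lambda$-orbits in two stages; these amount to the same computation.
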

\begin{proof}

Arranging the elements of $T_{1^t}^i$ horizontally, we obtain all diagrams with $t$ nodes and $i$ edges, such that each dot borders at most one edge.  We have that $\Sigma_t$ acts  by permuting these diagrams and that $\Sigma_{t-2i} \times \Sigma_2\wr\Sigma_i$ is the stabiliser of the following diagram:
\begin{align*}
\begin{minipage}{64mm}
\def\objectstyle{\scriptstyle}
\xymatrix@=1pt{
&&\circ	&&\circ	&&\circ	&&\circ & &\ldots &&\circ&&\circ \ar@{-}[ll]		&&\circ	&&\circ \ar@{-}[ll]}
\end{minipage},
\end{align*}
where the $i$ arcs each join two consecutive nodes of the last $2i$ nodes.  Therefore, a set of left cosets representatives of $\Sigma_t/ \Sigma_{t-2i} \times \Sigma_2 \wr \Sigma_i$ is given by $T^i_{1^t}$.   

A set of double coset representatives for $\Sigma_{\lambda} \backslash \Sigma_t / \Sigma_{t-2i} \times \Sigma_2\wr\Sigma_i$ is equivalent to a parameterisation of the $\Sigma_{\lambda}$-orbits on $T^i_{1^t}$.   If we now rearrange the nodes  
 to form a $(\lambda,i)$-diagram we have that $\Sigma_{\lambda}$ acts by permuting the nodes in each row.  Therefore by definition,  we have that  $T^i_\lambda$ provides a set of double coset representatives for $\Sigma_\lambda \backslash \Sigma_t/ \Sigma_{t-2i} \times \Sigma_2\wr\Sigma_i$. 
The fiber of the projection from $\Sigma_{\lambda} \backslash \Sigma_t /  \Sigma_2\wr\Sigma_i$ to $\Sigma_{\lambda} \backslash \Sigma_t / \Sigma_{t-2i} \times \Sigma_2\wr\Sigma_i$ is $\mathcal{D}_{\lambda\downarrow_\sigma}$.  
The general case follows by taking $\Sigma_\mu$ cosets.

We may parameterise $\Sigma_\lambda/(\Sigma_\lambda \cap  \Sigma_{t-2i} \times \Sigma_2\wr\Sigma_i)$ cosets in a similar fashion.  Here, the stabiliser of a diagram, $\sigma$, is given by $\Stab(\sigma)=\Sigma_\lambda \cap \Sigma_{t-2i} \times \Sigma_2\wr\Sigma_i$.  These orbits are readily seen to be parameterised by $T^i_\lambda$.  
 \end{proof}

\subsection{Filtrations of permutation modules}\label{brauer proof} 
 The following is a restatement, in the language of modified tableaux, of several of the results from \cite[Section 7]{Row}.  There they show that $e_{l+i}M(\lambda)\simeq \oplus_{\sigma\in T^i_{\lambda}}M(\lambda\!\downarrow_\sigma)$ and give the isomorphism below.
%

\begin{prop}\label{subqBrauer}
   The permutation modules, $M(\lambda,l)$, for the Brauer algebra have a filtration with subquotients of the form $\oplus_{\sigma \in T^i_\lambda} 
    \G_{l+i}(M(\lambda\!\downarrow_\sigma)).$  
   The isomorphism is as follows:
 \begin{align*}
\pi:
 \G_{l+i}(M(\lambda\!\downarrow_\sigma))
&\longrightarrow 
J_{l,i}/J_{l,i+1}
\otimes_{K\Sigma_{t}}  M(\lambda)
 \\
 v \otimes \epsilon_{l+i} \otimes x_{\lambda\downarrow_\sigma} &\hookrightarrow (v \otimes \sigma \otimes \id) \otimes_{K\Sigma_t} x_{\lambda}.
 \end{align*}
\end{prop}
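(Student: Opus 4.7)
The plan is to build the filtration by applying the general induction filtration from Subsection \ref{wobbles} and then identifying each subquotient concretely using Proposition \ref{A filtration of induction for Brauer} together with the combinatorics of modified tableaux from Lemma \ref{ASETOFBRAUERCOSETS}.

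First I would note that, by the transitivity of induction, $M(\lambda,l) = \Ind_{\Sigma_\lambda}^A K = Ae_l \otimes_{K\Sigma_t} M(\lambda)$, so the ideal chain $Ae_l = J_{l,0} \supseteq J_{l,1} \supseteq \cdots \supseteq 0$ produces a filtration of $M(\lambda,l)$ whose $i$-th subquotient is $(J_{l,i}/J_{l,i+1}) \otimes_{K\Sigma_t} M(\lambda)$. Proposition \ref{A filtration of induction for Brauer} then rewrites this as
$$(J_{l,i}/J_{l,i+1}) \otimes_{K\Sigma_t} M(\lambda) \;\simeq\; (J_{l+i,0}/J_{l+i,1}) \otimes_{K\Sigma_{t-2i}} \bigl(e_{l+i} M(\lambda)\bigr),$$
and the first factor here is, as observed in Subsection \ref{wobbles}, precisely the globalisation functor $\G_{l+i}$ applied to the $K\Sigma_{t-2i}$-module $e_{l+i}M(\lambda)$.

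The heart of the argument is therefore the decomposition $e_{l+i} M(\lambda) \simeq \oplus_{\sigma \in T_\lambda^i} M(\lambda\!\downarrow_\sigma)$. Using the second description of ${^mV^l}\otimes_{\Sigma_t}-$ in Proposition \ref{A filtration of induction for Brauer}, $e_{l+i}M(\lambda)$ is the quotient of $\Res_{\Sigma_{t-2i} \times \Sigma_2 \wr \Sigma_i}^{\Sigma_t} M(\lambda)$ by the relations coming from $\Sigma_2 \wr \Sigma_i$. I would apply Mackey's theorem to the pair of subgroups $\Sigma_\lambda$ and $\Sigma_{t-2i} \times \Sigma_2\wr\Sigma_i$ of $\Sigma_t$, taking the double coset representatives $\mathcal{D}_{\lambda\downarrow,1^{t-2i}}^i$ supplied by Lemma \ref{ASETOFBRAUERCOSETS}. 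Each $\sigma \in T_\lambda^i$ contributes a summand induced from its stabiliser $\Sigma_{\lambda\!\downarrow_\sigma} \times (\Sigma_2\wr\Sigma_i)$; passing to coinvariants under $\Sigma_2\wr\Sigma_i$ trivialises the wreath part and leaves precisely $M(\lambda\!\downarrow_\sigma) = \Ind_{\Sigma_{\lambda\!\downarrow_\sigma}}^{\Sigma_{t-2i}} K$ in each component.

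Finally, with the three identifications in place, verifying the explicit formula amounts to tracking a generator. On the left, a generator of the summand indexed by $\sigma$ has the form $v \otimes \epsilon_{l+i} \otimes x_{\lambda\downarrow_\sigma}$. Under the inverse of the Mackey isomorphism this becomes the diagram element $v \otimes \sigma \otimes \id$ tensored with $x_\lambda$ in $(J_{l,i}/J_{l,i+1}) \otimes_{K\Sigma_t} M(\lambda)$, which is exactly the stated image. The main obstacle is the Mackey step: one must line up the coset data from Lemma \ref{ASETOFBRAUERCOSETS} with the two equivalent descriptions of ${^mV^l}\otimes_{\Sigma_t}-$ (restriction-then-projection versus tensoring with the bimodule) so that each $\sigma$-component is matched with the correct diagram $v \otimes \sigma \otimes \id$ on the nose, rather than merely up to an overall isomorphism.
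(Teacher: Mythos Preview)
Your proposal is correct and follows essentially the same approach the paper indicates: the paper does not give an independent proof but states that the proposition is a restatement, in the language of modified tableaux, of results from \cite[Section 7]{Row}, where it is shown that $e_{l+i}M(\lambda)\simeq \oplus_{\sigma\in T^i_{\lambda}}M(\lambda\!\downarrow_\sigma)$, and the filtration then follows from Proposition \ref{A filtration of induction for Brauer}. Your argument supplies exactly this decomposition via Mackey and the cosets of Lemma \ref{ASETOFBRAUERCOSETS}; two small points to tighten are that the relevant double cosets are $\mathcal{D}^i_{\lambda\downarrow,(t-2i)}$ (so $\mu=(t-2i)$, not $1^{t-2i}$, giving one representative per $\sigma\in T^i_\lambda$), and that the intersection $\Sigma_\lambda \cap {}^\sigma(\Sigma_{t-2i}\times\Sigma_2\wr\Sigma_i)$ is in general $\Sigma_{\lambda\downarrow_\sigma}$ times a product of hyperoctahedral pieces depending on $\sigma$ (cf.\ the remark at the start of Subsection \ref{mod brauer}), not literally the full $\Sigma_2\wr\Sigma_i$, though this is immaterial once you pass to coinvariants.
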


This proposition allows us to immediately conclude that semistandard modified tableaux give a filtration of permutation modules by standard modules.

\begin{cor}
The $B_K(r,\delta)$-module $M(\lambda,l)$ has a filtration $M(\lambda,l)=M_1   \supseteq \ldots \supseteq M_{k+1} =0$ such that there exists $\mu^j\vdash t-2i$ with $M_j/M_{j+1} \simeq \Delta(\mu^j,l+i)$. Moreover, for each partition $\mu$ the number of $\mu^j$ equal to $\mu$ is the number of semistandard $\mu$-tableaux of type $(\lambda,i)$.
\end{cor}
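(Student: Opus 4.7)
The plan is to refine the filtration from Proposition \ref{subqBrauer} using the classical Murphy filtration for symmetric group permutation modules, and to check that the multiplicities assemble correctly.

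First, recall that Proposition \ref{subqBrauer} produces a filtration of $M(\lambda,l)$ whose layers are of the form $\bigoplus_{\sigma \in T^i_\lambda} \G_{l+i}(M(\lambda\!\downarrow_\sigma))$, indexed by $i \geq 0$. Each summand is the globalisation of a Young permutation module for the symmetric group $\Sigma_{t-2i}$. The Murphy corollary (in Section \ref{Smurphet}) gives, for each fixed $\sigma \in T^i_\lambda$, a filtration of $M(\lambda\!\downarrow_\sigma)$ by Specht modules $\mathcal{S}(\mu)$, with $\mu$ a partition of $t-2i$, such that the multiplicity of $\mathcal{S}(\mu)$ equals $|T_0(\mu, \lambda\!\downarrow_\sigma)|$.

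Next I would apply the globalisation functor $\G_{l+i}$ to this Specht filtration. By Proposition \ref{HHKP1}, $\G_{l+i}$ is exact, and it sends cell modules of $B_{l+i} = \Sigma_{t-2i}$ to cell modules of the Brauer algebra, i.e.\ $\G_{l+i}(\mathcal{S}(\mu)) = \Delta(\mu, l+i)$. Therefore each $\G_{l+i}(M(\lambda\!\downarrow_\sigma))$ acquires a filtration by modules of the form $\Delta(\mu, l+i)$, with multiplicity $|T_0(\mu, \lambda\!\downarrow_\sigma)|$. Summing over $\sigma \in T^i_\lambda$ and refining the Proposition \ref{subqBrauer} filtration accordingly, I obtain a filtration of $M(\lambda,l)$ whose successive quotients are $\Delta(\mu^j, l+i)$ for various $\mu^j$ and various $i$.

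Finally, for the multiplicity statement, I would observe that by the definition in Section \ref{mod brauer} a semistandard $\mu$-tableau of type $(\lambda,i)$ is exactly a pair $(\sigma, \S)$ with $\sigma \in T^i_\lambda$ and $\S \in T_0(\mu, \lambda\!\downarrow_\sigma)$. Consequently
\[
|T_0^i(\mu, \lambda)| \;=\; \sum_{\sigma \in T^i_\lambda} |T_0(\mu, \lambda\!\downarrow_\sigma)|,
\]
which is exactly the total number of occurrences of $\Delta(\mu, l+i)$ in the assembled filtration. The main (and essentially only) obstacle is bookkeeping: one must order the refinement so that the dominance relation on modified tableaux introduced in Section \ref{mod brauer} is respected, but this is automatic since layers coming from larger $i$ sit at the bottom of the Proposition \ref{subqBrauer} filtration (corresponding to deeper ideals $J_{l,i}$), while the Murphy filtration within each layer is already ordered by the dominance order on partitions of $t-2i$. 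No deeper input is required beyond exactness of $\G_{l+i}$ and the classical Murphy corollary.
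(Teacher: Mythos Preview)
Your proposal is correct and is exactly the argument the paper has in mind: the paper states the corollary immediately after Proposition \ref{subqBrauer} with the remark that the proposition ``allows us to immediately conclude'' the result, and your write-up simply makes that immediate deduction explicit by combining the subquotient decomposition with the classical Murphy Specht filtration, exactness of $\G_{l+i}$, the identification $\G_{l+i}(\mathcal{S}(\mu))=\Delta(\mu,l+i)$, and the definition $T_0^i(\mu,\lambda)=\{(\sigma,\S):\sigma\in T^i_\lambda,\ \S\in T_0(\mu,\lambda\!\downarrow_\sigma)\}$.
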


\begin{eg}
Consider the $B_K(4,\delta)$-module $M((3,1),0)$.   There are two row-standard $((3,1),1)$-diagrams, namely
\begin{align*}
\sigma_1 = \begin{minipage}{54mm}
\def\objectstyle{\scriptstyle}
\xymatrix@=6pt{
\ast		&\ast &\ast	\ar@{-}[dll] &	 \\
\ast 
}
\end{minipage}  
\text{and } \ \ 
\sigma_2=\begin{minipage}{54mm}
\def\objectstyle{\scriptstyle}
\xymatrix@=6pt{
\ast 	&\ast \ar@{-}[r]		&\ast \\
\ast 
}
\end{minipage}.
 \end{align*}
 We have that $(3,1)\!\!\downarrow_{\sigma_1}=(2)$ and $(3,1)\!\!\downarrow_{\sigma_2}=(1^2)$, therefore
\begin{align*}
J_{0,1}/J_{1,1} \otimes_{K\Sigma_4}M(3,1) \simeq \G_1(M(2)) \oplus \G_1(M(1^2)).
\end{align*} 
\end{eg}

\subsection{A Murphy basis}\label{Smurphy} We now give a basis of the permutation modules which generalises the construction of Murphy.
 \begin{prop}[A Murphy basis]
The permutation module $M(\lambda,l)$ has a basis 
 given in terms of modified tableaux as follows:
\begin{align*}
\{ v \otimes [\sigma] \otimes m_{ \mathfrak{s}\T^\sigma} :  v \in {V_{l+i}}, \T^\sigma \in T^i_0(\omega, \lambda), \mathfrak{s} \in \Std(\omega) \text{ for some } \omega \vdash t \}.
\end{align*}
\end{prop}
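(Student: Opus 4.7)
The plan is to combine the filtration of Proposition~\ref{subqBrauer} with the classical Murphy basis theorem of Section~\ref{Smurphet}, and to lift each layer's basis back to $M(\lambda,l)$ through the explicit isomorphism $\pi$.

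First, Proposition~\ref{subqBrauer} gives a filtration $M(\lambda,l)=M_0\supseteq M_1\supseteq\cdots\supseteq 0$ whose $i$-th subquotient is $\bigoplus_{\sigma\in T^i_\lambda}\G_{l+i}(M(\lambda\!\downarrow_\sigma))$, and by Proposition~\ref{HHKP1} each summand equals $V_{l+i}\otimes M(\lambda\!\downarrow_\sigma)$ as a $K$-vector space. Since $\lambda\!\downarrow_\sigma\vDash t-2i$, the classical Murphy basis theorem equips $M(\lambda\!\downarrow_\sigma)$ with the basis $\{m_{\mathfrak{s}\T}\}$ indexed by $\T\in T_0(\omega,\lambda\!\downarrow_\sigma)$ and $\mathfrak{s}\in\Std(\omega)$ with $\omega\vdash t-2i$. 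Tensoring with any chosen basis of $V_{l+i}$ produces a $K$-basis of each summand.

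Next, I would recognise the combined index set: by definition (Section~\ref{mod brauer}), a pair $(\sigma,\T)$ with $\sigma\in T^i_\lambda$ and $\T\in T_0(\omega,\lambda\!\downarrow_\sigma)$ is precisely a modified semistandard tableau $\T^\sigma\in T_0^i(\omega,\lambda)$. Combined with $v\in V_{l+i}$ and $\mathfrak{s}\in\Std(\omega)$, and letting $i\geq 0$ range over the filtration layers, this exactly reproduces the index set of the stated basis. The isomorphism $\pi$ of Proposition~\ref{subqBrauer} supplies the lift: extending the generator formula $v\otimes\epsilon_{l+i}\otimes x_{\lambda\downarrow_\sigma}\mapsto(v\otimes\sigma\otimes\id)\otimes_{K\Sigma_t}x_\lambda$ by right-$K\Sigma_{t-2i}$-equivariance sends $v\otimes\epsilon_{l+i}\otimes m_{\mathfrak{s}\T}$ to the element written $v\otimes[\sigma]\otimes m_{\mathfrak{s}\T^\sigma}$ in the proposition. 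A standard filtration argument, choosing representatives of each subquotient's basis in turn, then assembles these lifts into a $K$-basis of $M(\lambda,l)$.

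The main technical point I expect will be the passage from the single coset representative $\sigma$ appearing in the formula for $\pi$ to the coset sum $[\sigma]$ in the stated basis. This reduces to showing that the ambiguity in choosing a representative of $\sigma\in T^i_\lambda$ is absorbed by the Young subgroup $\Sigma_\lambda\cap(\Sigma_{t-2i}\times\Sigma_2\wr\Sigma_i)$ acting through $M(\lambda)$, so that the orbit sum is the intrinsic and well-defined lift. This is precisely the orbit-parameterisation content of Lemma~\ref{ASETOFBRAUERCOSETS}, which identifies $T^i_\lambda$ with a set of double coset representatives; once it is in place, the proposition reduces to the classical Murphy basis theorem applied layer-by-layer along the filtration of Proposition~\ref{subqBrauer}.
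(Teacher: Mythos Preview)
Your proposal is correct and follows essentially the same approach as the paper: reduce to each subquotient via the filtration of Proposition~\ref{subqBrauer}, apply the classical Murphy basis to $M(\lambda\!\downarrow_\sigma)$ on the through-lines, and lift via the isomorphism $\pi$. The paper is terser about the passage from $\sigma$ to the coset sum $[\sigma]$ (it simply says one ``moves $x_\lambda$ through the tensor product''), but your explicit invocation of Lemma~\ref{ASETOFBRAUERCOSETS} for this step is the same content made precise.
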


\begin{remk}
The permutation module $M(1^r,0)$ is isomorphic to $B_K(r, \delta)$.  In this case the above basis is a well known cellular basis of the Brauer algebra with respect to the dominance ordering. 
\end{remk}

 \begin{proof}[Proof of Proposition]
 By Proposition \ref{subqBrauer}, it is enough to fix $\sigma \in T^i_\lambda$,  and consider the basis of the subquotient $\{(v \otimes \sigma \otimes \id) \otimes_{K\Sigma_t} M(\lambda)$: $v \in V_{l+i}\}$.  Fix $v \in V_{l+i}$, we can move $x_\lambda$ through the tensor product, so that the bottom rows of diagrams in $Ae_l$ are given by $[\sigma]$ and the  through-lines are given by $d_\sigma x_{\lambda\downarrow_\sigma}$, for $d_\sigma \in \mathcal{D}_{\lambda\downarrow_\sigma}$.  We may take the basis of Theorem \ref{Smurphet} to provide the through lines and so we are done.  
 %
 \end{proof}
\begin{eg}
We consider the $B_K(4,\delta)$-module $M((3,1),0)$.  We want to construct a basis of the subquotient $J_{0,1}/J_{0,2} \otimes_{\Sigma_4}M(3,1)$.  
 We have that 
\begin{align*}
\epsilon_1 \otimes [\sigma_1] \otimes x_{\lambda\downarrow_{\sigma_1}} = \ \
 &\begin{minipage}{34mm}
\def\objectstyle{\scriptstyle}
\xymatrix@=1pt{
\circ 	\ar@{-}[dd]	 &&\circ 	\ar@{-}[dd]			&&\circ\ar@{-}[rr]			&&\circ	  \\	&\\
\circ  	&&\circ				&&\circ \ar@{-}[rr]		&&\circ								 } \end{minipage} \ + \
\begin{minipage}{34mm}
\def\objectstyle{\scriptstyle}
\xymatrix@=1pt{
\circ 	\ar@{-}[dd]	 &&\circ 	\ar@{-}[ddrr]			&&\circ\ar@{-}[rr]			&&\circ \\ &\\
\circ  				&&\circ	 \ar@{-}@/^.7pc/[rrrr]		 		&&\circ	&&\circ} \end{minipage} \ + \
 \begin{minipage}{34mm}
\def\objectstyle{\scriptstyle}
\xymatrix@=1pt{
\circ 	\ar@{-}[ddrr]	 &&\circ 	\ar@{-}[drdr]			&&\circ\ar@{-}[rr]			&&\circ \\ &\\
\circ  \ar@{-}@/^.7pc/[rrrrrr] 	&&\circ				&&\circ 	&&\circ} \end{minipage}\\
\ + \ &\begin{minipage}{34mm}
\def\objectstyle{\scriptstyle}
\xymatrix@=1pt{
\circ 	\ar@{-}[ddrr]	 &&\circ 	\ar@{-}[ddll]			&&\circ\ar@{-}[rr]			&&\circ \\&\\
\circ  	&&\circ				&&\circ \ar@{-}[rr]		&&\circ } \end{minipage} \ + \
\begin{minipage}{34mm}
\def\objectstyle{\scriptstyle}
\xymatrix@=1pt{
\circ 	\ar@{-}[ddrrrr]	 &&\circ 	\ar@{-}[ddll]			&&\circ\ar@{-}[rr]			&&\circ\\ &\\
\circ  	&&\circ	 \ar@{-}@/^.7pc/[rrrr]		 		&&\circ	&&\circ} \end{minipage} \ + \
 \begin{minipage}{34mm}
\def\objectstyle{\scriptstyle}
\xymatrix@=1pt{
\circ 	\ar@{-}[ddrrrr]	 &&\circ 	\ar@{-}[dd]			&&\circ\ar@{-}[rr]			&&\circ\\ &\\
\circ  \ar@{-}@/^.7pc/[rrrrrr] 	&&\circ				&&\circ 	&&\circ} \end{minipage},\\
\intertext{and}
\epsilon_1 \otimes [\sigma_2] \otimes x_{\lambda\downarrow_{\sigma_2}} = \ \ %
& \begin{minipage}{34mm}
\def\objectstyle{\scriptstyle}
\xymatrix@=1pt{
\circ 	\ar@{-}[dd]	 &&\circ 	\ar@{-}[ddrrrr]			&&\circ\ar@{-}[rr]			&&\circ  \\ &\\
\circ  	&&\circ				&&\circ \ar@{-}[ll]		&&\circ } \end{minipage} \ + \
\begin{minipage}{34mm}
\def\objectstyle{\scriptstyle}
\xymatrix@=1pt{
\circ 	\ar@{-}[ddrr]	 &&\circ 	\ar@{-}[ddrrrr]			&&\circ\ar@{-}[rr]			&&\circ  \\ &\\
\circ	 \ar@{-}@/^.7pc/[rrrr]		 		&&\circ &&\circ	&&\circ } \end{minipage} \ + \
 \begin{minipage}{34mm}
\def\objectstyle{\scriptstyle}
\xymatrix@=1pt{
\circ 	\ar@{-}[ddrrrr]	 &&\circ 	\ar@{-}[ddrrrr]			&&\circ\ar@{-}[rr]			&&\circ  \\ &\\
\circ  \ar@{-}[rr] 	&&\circ				&&\circ 	&&\circ} \end{minipage}.
\end{align*}
 The subquotient $J_{0,1}/J_{0,2}\otimes_{K\Sigma_4} M(3,1)$ splits into a direct sum of two modules. 
 The submodule isomorphic to ${\G_1} (M(2))$ of the subquotient has basis 
\begin{align*} \{	v  \otimes (
 (\begin{minipage}{34mm}
\def\objectstyle{\scriptstyle}
\xymatrix@=4pt{ \circ  	&\circ				&\circ \ar@{-}[r]		&\circ	} \end{minipage})+
(\begin{minipage}{34mm}
\def\objectstyle{\scriptstyle}
\xymatrix@=4pt{ \circ  	& 	 \circ\ar@{-}@/_.4pc/[rr]		&\circ 	&\circ	} \end{minipage})
+
(\begin{minipage}{34mm}
\def\objectstyle{\scriptstyle}
\xymatrix@=4pt{ \circ\ar@{-}@/_.4pc/[rrr]	 	& \circ 		&\circ 		&\circ	} \end{minipage}))
 \otimes (1 + (1 \ 2)) : v \in {V_1} \},
\intertext{and the submodule isomorphic to ${\G_1}(M(1^2))$ has basis} 
 \{	v  \otimes (
(\begin{minipage}{34mm}
\def\objectstyle{\scriptstyle}
\xymatrix@=4pt{ \circ  	&\circ				&\circ \ar@{-}[l]		&\circ	} \end{minipage})+
(\begin{minipage}{34mm}
\def\objectstyle{\scriptstyle}
\xymatrix@=4pt{ \circ\ar@{-}@/_.4pc/[rr]	  	&\circ		&\circ 	&\circ	} \end{minipage})
+
(\begin{minipage}{34mm}
\def\objectstyle{\scriptstyle}
\xymatrix@=4pt{ \circ  	&\circ	  \ar@{-}[l]				&\circ 		&\circ	} \end{minipage}))
 \otimes  g : v \in {V_1}, g \in \Sigma_2 \}.
\end{align*}
\end{eg}

\subsection{Homomorphisms between permutation modules}\label{frob1}
 Using the adjunction in Section \ref{ind} and the realisation of the globalisation functor in Proposition \ref{EXAMbrauer}, we describe homomorphisms between permutation modules.
  
\begin{prop}\label{piranese} 
Let $\lambda \vdash t$, $\mu \vdash s$, then $\{\varphi^d_{\sigma, \tau} : \sigma \in T_\lambda, \tau \in T_\mu, d \in \mathcal{D}_{\lambda\downarrow_\sigma \mu\downarrow_\tau  }\}$ is a basis of $\Hom_{B_r}(M(\lambda,l), M(\mu,m))$, where $\varphi^d_{\sigma, \tau}$ is given by
\begin{align*}
\varphi_{\sigma\tau}^d(\epsilon_l\otimes\epsilon_l\otimes x_\lambda) &= [\sigma]	\otimes[\tau]	\otimes \sum_{g \in \mathcal{D}_\nu \cap \Sigma_{\lambda\downarrow_\sigma}} gd x_\mu , 
\end{align*}
where $\nu$ is the composition which labels the Young subgroup $d\Sigma_{\mu\downarrow_\tau}d^{-1}\cap\Sigma_{\lambda\downarrow_\sigma}\leq\Sigma_{t-2i}$.  
\end{prop}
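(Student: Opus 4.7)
The plan is to mimic the proof of Theorem~\ref{DJbasisofschur}, pushed through the adjunction of Section~\ref{ind}, exactly as flagged in the sentence preceding the statement. First, since $M(\lambda,l) = \Ind_{\Sigma_\lambda}^A K$, the adjunction yields
\[
\Hom_A(M(\lambda,l),\, M(\mu,m)) \;\simeq\; \Hom_{\Sigma_\lambda}\!\bigl(K,\, e_l M(\mu,m)\bigr),
\]
so the task reduces to exhibiting a basis of $\Sigma_\lambda$-invariants in $e_l M(\mu,m)$ and transporting it back through the adjunction to recognise the formula for $\varphi^d_{\sigma,\tau}$.

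Second, I would realise $e_l M(\mu,m)$ concretely as a $K\Sigma_t$-module. Writing $M(\mu,m) = Ae_m \otimes_{K\Sigma_s} M(\mu)$ and combining the bimodule description of the layers of $Ae_m$ from Proposition~\ref{A filtration of induction for Brauer} with the tableau-theoretic decomposition of Proposition~\ref{subqBrauer} and Lemma~\ref{restrict}, one finds that $e_l M(\mu,m)$ carries a $K\Sigma_t$-module filtration whose $j$-th subquotient is
\[
\bigoplus_{\tau \in T_\mu^{\,i+j}} \Ind_{\Sigma_{\mu\downarrow_\tau}\times \Sigma_2\wr \Sigma_{j}}^{\Sigma_t} K,
\]
where $i = m - l \ge 0$ (as in the opening of Section~\ref{mod brauer}) and $j \ge 0$ indexes the layer.

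Third, I would enumerate the $\Sigma_\lambda$-invariants layer-by-layer. For fixed $\tau$ the standard Mackey/double-coset formula gives $\Hom_{\Sigma_\lambda}(K,\Ind_H^{\Sigma_t} K) \simeq K[\Sigma_\lambda\backslash \Sigma_t / H]$ with $H = \Sigma_{\mu\downarrow_\tau}\times\Sigma_2\wr\Sigma_j$; Lemma~\ref{ASETOFBRAUERCOSETS} identifies this set with pairs $(\sigma,d)$ where $\sigma\in T_\lambda^{\,i+j}$ and $d\in\mathcal{D}_{\lambda\downarrow_\sigma,\mu\downarrow_\tau}$; and the Green--Dipper--James theorem (Theorem~\ref{DJbasisofschur}) applied to the through-line factor gives the explicit sum $\sum_{g\in \mathcal{D}_\nu\cap \Sigma_{\lambda\downarrow_\sigma}} gd\,x_\mu$ of the statement. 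Transporting back through the adjunction of the first step, and appending the dangle contribution $[\sigma]\otimes[\tau]$ coming from the identification of the layer, recovers $\varphi^d_{\sigma,\tau}$.

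The main obstacle lies in assembling the invariants from the different layers of the filtration of Step~2 into a single basis without losing or double-counting any. This amounts to showing that taking $\Sigma_\lambda$-fixed points is exact on this particular filtration, which follows because the filtration of $Ae_m$ by the two-sided ideals $Ae_{m+j}A$ splits as a filtration of $K$-vector spaces via the inflation decomposition $A = \bigoplus_k V_k\otimes V_k\otimes B_k$ of Section~\ref{EXAMbrauer}. Linear independence across layers is then immediate from the fact that the $\Sigma_\lambda$-invariant dangle factor $[\sigma]\otimes[\tau]$ has a different number of arcs on each layer, so that candidates from distinct layers cannot coincide.
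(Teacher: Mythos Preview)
Your proposal is correct and follows essentially the same route as the paper: the adjunction of Section~\ref{ind}, the decomposition of $e_l M(\mu,m)$ via Propositions~\ref{A filtration of induction for Brauer} and~\ref{subqBrauer} together with Lemma~\ref{restrict}, then Mackey decomposition plus Lemma~\ref{ASETOFBRAUERCOSETS} for the double cosets and Theorem~\ref{DJbasisofschur} for the through-lines. The only refinement concerns your final paragraph: the paper disposes of the ``assembling layers'' issue in one line by observing that the left $K\Sigma_t$-action merely permutes arcs and cannot increase their number, so the filtration of $e_lM(\mu,m)$ is actually a direct sum of $\Sigma_t$-modules---this is stronger than the $K$-vector-space splitting you invoke and makes the exactness of $\Hom_{\Sigma_\lambda}(K,-)$ on the layers immediate.
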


\begin{rmk}
In the above we have used Theorem \ref{DJbasisofschur} to determine the through-lines; this is to allow direct comparison between the above and \cite[Theorem 5.3]{hko}.  Note that later we shall instead use Theorem \ref{cellbasisofschur} as this shall result in a cellular basis.
\end{rmk}

\begin{proof} Using the adjunction in Section \ref{ind} we have that
\begin{align*} 
\Hom_{B_r}(M(\lambda,l), M(\mu,m)) &\simeq
\Hom_{\Sigma_t}(M(\lambda),  e_l(\oplus_i (J_{m,i}/J_{m,i+1})\otimes_{K\Sigma_{r-2m}} M(\mu))) \tag{$\star$} \\
\intertext{where the left action of $K\Sigma_t$ is by permutation of arcs (it cannot increase the number of arcs), hence the direct sum.  By Proposition \ref{subqBrauer} we have that}
&\simeq
\Hom_{\Sigma_t}(M(\lambda), e_l(\oplus_{\tau\in T^i_\mu} {^0V^{m+i}}\otimes_{K\Sigma_{r-2(m+i)}}  M(\mu\!\downarrow_\tau))), 
\intertext{if $l>m$, multiplication by $e_l$ will annihilate the top $(l-m)$ layers of $M(\mu,m)$ (by Lemma \ref{restrict}).  We set $j$ to be $l-m+i$ if $l-m\geq0$ and to be $i$ otherwise.  Now,			}
&\simeq\Hom_{\Sigma_t}(M(\lambda), \oplus_{\tau\in T^j_\mu} {^lV^{l+i}}\otimes_{K\Sigma_{r-2(l+i)}} M(\mu\!\downarrow_\tau)) \\
&\simeq \Hom_{\Sigma_t}( K_{\Sigma_\lambda}\!\!\uparrow^{\Sigma_t},  \oplus_{\tau\in T^j_\mu} K_{ \Sigma_{\mu\!\downarrow_\tau}\times\Sigma_2 \wr \Sigma_i}\!\!\uparrow^{\Sigma_{t}}).  \tag{$\star\star$}
\end{align*}
We now fix a direct summand in the above by picking an element $\tau \in T^j_\mu$. By Frobenius reciprocity we have that
\begin{align*}
\Hom_{\Sigma_t}(K_{\Sigma_\lambda}\!\uparrow^{\Sigma_t},K_{\Sigma_{\mu\downarrow_\tau} \times \Sigma_2 \wr \Sigma_i}\!\uparrow^{\Sigma_t}) 
&\simeq \Hom_{\Sigma_\lambda}(K , K_{\Sigma_{\mu\downarrow_\tau} \times \Sigma_2 \wr \Sigma_i}\!\uparrow^{\Sigma_t}\downarrow_{\Sigma_\lambda}).
\intertext{Using our set of double cosets and Mackey decomposition we have:}
\Hom_{\Sigma_t}(K_{\Sigma_\lambda}\!\uparrow^{\Sigma_t},K_{\Sigma_{\mu\downarrow_\tau} \times \Sigma_2 \wr \Sigma_i}\!\uparrow^{\Sigma_t}) &\simeq 	\oplus_{\sigma \in T^i_\lambda}		\Hom_{\Sigma_\lambda}(K , K\!\downarrow_{\Sigma_\lambda \cap (\Sigma_{\mu\downarrow_\tau} \times \Sigma_2 \wr \Sigma_i)^\sigma }\uparrow^{\Sigma_\lambda}),
\end{align*}
where the superscript $\sigma$ denotes that this is the embedding of the subgroup into $\Stab(\sigma)$.
We sum over the elements of this orbit to obtain the corresponding homomorphism 
\begin{align*}
\varphi_\sigma : &K_{\Sigma_\lambda}\!\uparrow^{\Sigma_t} \to K_{\Sigma_{\mu\downarrow_\tau} \times \Sigma_2 \wr \Sigma_i}\!\uparrow^{\Sigma_t} \\
& x_\lambda \longmapsto [\sigma] \otimes \epsilon_{l+i} \otimes x_{t-2i}.
\end{align*}
This is an element of $(\star\star)$, by Propositions \ref{subqBrauer} and \ref{Smurphy} the isomorphic element in $(\star)$ is obtained by substituting $[\tau]$ in the place of $\epsilon_{l+i}$.
\end{proof}

\begin{eg}
Consider our earlier example of $\Hom_{B_K(3,\delta)}(M((3),0),M((1),1)).$ We have that there is a single 
$\sigma = 
(\def\objectstyle{\scriptstyle}
\xymatrix@=6pt{
 \ast  &	\ast  \ar@{-}[l]	&\ast
}) \in T^1_3$, a single 
$\tau = ( \def\objectstyle{\scriptstyle}
\xymatrix@=6pt{
 \ast }) \in T^0_1$,
 and a single  $1_{\Sigma_3}  \in \mathcal{D}_{3\downarrow_\sigma, 1}$.  Therefore, there will be a unique homomorphism, $\varphi_{\sigma, \tau}^1$, determined by
\begin{align*}
\varphi_{\sigma, \tau}^1(x_{(3)}) =[\sigma] \otimes [\tau] \otimes K&=
  \begin{minipage}{34mm}
\def\objectstyle{\scriptstyle}
\xymatrix@=5pt{
\circ 	\ar@{-}[d]	&\circ \ar@{-}[r]	&\circ 		\\
\circ		&\circ 	\ar@{-}[r]	&\circ 	
}
\end{minipage}+
\begin{minipage}{34mm}
\def\objectstyle{\scriptstyle}
\xymatrix@=5pt{
\circ 	\ar@{-}[r]	&\circ 	&\circ  	\ar@{-}[dll] \\
\circ		&\circ 	\ar@{-}[r]	&\circ 	}
\end{minipage}+
  \begin{minipage}{34mm}
\def\objectstyle{\scriptstyle}
\xymatrix@=5pt{
\circ\ar@{-}@/_.8pc/[rr] 		&\circ \ar@{-}[dl]	&\circ	\\
\circ		&\circ 	\ar@{-}[r]	&\circ 	
}
\end{minipage}.
\end{align*}
We now specialise to the case $\delta=-2$.  
 We have that the kernel of $\varphi_{\sigma, \tau}^1$  is the 3-dimensional submodule 
$J_{0,1} \otimes_{K\Sigma_3} M((3),0) \subset M((3),0)$,   
as can be seen by the following:
\begin{align*}
\varphi_{\sigma, \tau}^1 (e_1 (\epsilon_0 \otimes \epsilon_0 \otimes x_{(3)}))
&= e_1  \varphi_{\sigma, \tau}^1 (  
\epsilon_0 \otimes \epsilon_0 \otimes  x_{(3)}) 		\\
&=	e_1( \begin{minipage}{34mm}
\def\objectstyle{\scriptstyle}
\xymatrix@=5pt{
\circ 	\ar@{-}[d]	&\circ \ar@{-}[r]	&\circ 		\\
\circ		&\circ 	\ar@{-}[r]	&\circ 	
}
\end{minipage}+
\begin{minipage}{34mm}
\def\objectstyle{\scriptstyle}
\xymatrix@=5pt{
\circ 	\ar@{-}[r]	&\circ 	&\circ  	\ar@{-}[dll] \\
\circ		&\circ 	\ar@{-}[r]	&\circ 	}
\end{minipage}+
  \begin{minipage}{54mm}
\def\objectstyle{\scriptstyle}
\xymatrix@=6pt{
\circ\ar@{-}@/_.8pc/[rr] 		&\circ \ar@{-}[dl]	&\circ	\\
\circ		&\circ 	\ar@{-}[r]	&\circ 	
}
\end{minipage})\\
&=(-2+1+1)e_1 =0.
\end{align*}
Note that reflection through the horizontal axis gives $\varphi_{\tau,\sigma}^1\in\Hom_{B_K(3,\delta)}(M((1),1),M((3),0)).$
\end{eg}

\section{The walled Brauer algebra}

In this section we shall provide the combinatorial set-up for the walled Brauer algebra.  Many of the proofs are identical to the Brauer algebra case, and so are omitted.
 
\subsection{Definitions and examples}\label{quantum}

It is easy to define the walled Brauer algebra, $BW_K(r', r,\delta)$, (sometimes denoted $BW_{r',r}$) as a subalgebra of the Brauer algebra $B_K(r'+r,\delta)$.  We partition a basis diagram with a wall separating the first $r'$ northern and southern nodes from the remainder.  Then the walled Brauer algebra is the subalgebra with basis given by the diagrams such that all arcs cross the wall, and no through-lines do so.  For example
\begin{align*}
\begin{minipage}{54mm}
\def\objectstyle{\scriptstyle}
\xymatrix@=1pt{
&&						&&				&&		&\ar@{--}[dddd]\\
&&\circ \ar@{-}@/_.6pc/[rrrrrr]	&&\circ  			&&\circ 	& 				 &\circ 			&&\circ \ar@{-}@/^.6pc/[llll]		&&\circ  \ar@{-}[ddrr]		&&\circ 					&&\circ\ar@{-}[ddllll]	  \\
&&\\	
&&\circ \ar@{-}[uurr] 			&&\circ				&& \circ  \ar@{-}[rr]		&				&\circ 			&&\circ \ar@{-}@/_.6pc/[llllll]				&&\circ 						&&\circ 			&&\circ \ar@{-}[uull]\\
&&						&&				&&			&}
\end{minipage} 
\end{align*}
is an element of $BW_K(3,5,\delta)$.

 \begin{prop}
Let  $r',r$ be integers, and $\delta \in K$.  If $r', r =1$ then suppose $\delta \neq 0$. The  walled Brauer algebra, $BW_K({r',r},\delta)$, is cellularly stratified.
\end{prop}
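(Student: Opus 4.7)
The plan is to verify the three conditions of Definition \ref{cellDEF} for $BW_K(r',r,\delta)$ by adapting the Brauer algebra argument. First I would establish the inflation decomposition: any walled Brauer diagram with $l$ arcs has $r'-l$ through-lines on the left of the wall and $r-l$ on the right, with the arcs giving a pair of configurations $(u,v) \in V_l \times V_l$ (where $V_l$ is the space spanned by arrangements of $l$ wall-crossing arcs on a single row) and the through-lines giving an element of $B_l := K[\Sigma_{r'-l} \times \Sigma_{r-l}]$. Thus
\begin{equation*}
BW_K(r',r,\delta) = \bigoplus_{l=0}^{\min(r',r)} V_l \otimes V_l \otimes B_l.
\end{equation*}
Since each $B_l$ is a product of group algebras of symmetric groups, it is cellular (Murphy basis), and concatenation of walled Brauer diagrams factors through a bilinear form on $V_l$ in exactly the form required for an iterated inflation. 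This verifies condition (1).

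For condition (2) I would take $\epsilon_l \in V_l$ to be the \emph{nested} arc configuration: the arc indexed by $i \in \{1,\ldots,l\}$ joins the left-hand node in position $r'-l+i$ to the right-hand node in position $l-i+1$. Provided $\delta \neq 0$ (needed at least when $r'=r=1$), set $e_l = \delta^{-l}\, \epsilon_l \otimes \epsilon_l \otimes 1_{B_l}$. Concatenating $\epsilon_l \otimes \epsilon_l$ with itself produces exactly $l$ closed loops (one per arc), so $e_l^2 = e_l$; the anti-involution exchanges top and bottom rows and clearly fixes $e_l$. When $\delta = 0$ (and $r'=r=1$ is excluded), one obtains the required idempotent by the small modification of \cite[Section 5.2]{HHKP}, replacing $\epsilon_l$ with a suitable element that also involves the through-line part.

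For condition (3), for $l \leq m \leq \min(r',r)$ the nested convention ensures that every arc of $\epsilon_l$ (namely the pair $(r'-l+i,\, l-i+1)$ for $i=1,\ldots,l$) appears \emph{verbatim} as an arc of $\epsilon_m$ (it is the $(m-l+i)$-th arc of $\epsilon_m$). Concatenating $e_l$ on top of $e_m$ therefore matches the bottom $l$ arcs of $\epsilon_l$ against $l$ of the top arcs of $\epsilon_m$, producing exactly $l$ closed loops; the remaining $m-l$ top arcs of $\epsilon_m$ splice together with through-lines of $\epsilon_l$ to reconstruct the top row of $\epsilon_m$. Unravelling the normalization gives $e_l e_m = e_m$, and applying the anti-involution yields $e_m e_l = e_m$.

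The main obstacle is purely bookkeeping: the nested (non-crossing) pairing is essential, since the alternative parallel convention fails condition (3) already for $BW_K(3,3,\delta)$, $l=1$, $m=2$ — the through-lines and arcs in the middle layer do not reassemble into $\epsilon_m$. Once this combinatorial choice is fixed the rest is routine, and the genuinely degenerate case $r'=r=1$, $\delta=0$ (which the hypothesis excludes) is exactly the one where no rescaling can make $\epsilon_1 \otimes \epsilon_1$ idempotent, so the statement is sharp.
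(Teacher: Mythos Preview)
Your proposal is correct and follows essentially the same route as the paper: nested (rainbow) arcs for $\epsilon_l$, normalisation by $\delta^{-l}$, and a direct check that $e_l e_m = e_m$ via the $l$ closed loops plus the splicing of the outer $m-l$ arcs with through-lines. The only differences are cosmetic: the paper cites \cite{coxwall} for both the iterated-inflation structure and the $\delta=0$ alternative idempotents (rather than \cite[Section~5.2]{HHKP}), and simply asserts that the conditions of Definition~\ref{cellDEF} are ``readily seen'' rather than spelling out the loop count as you do.
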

\begin{proof}
In \cite{coxwall} it is shown that the walled Brauer algebra is an iterated inflation with input algebras  $\Sigma_{r'-l}\times\Sigma_{r-l}$ for $l \leq {\rm min}\{r',r\}$.  We let $\Sigma_{r',r}$ denote $\Sigma_{r'}\times\Sigma_{r}$.   They define idempotents $e_l$ for $l \leq \minl \{r,r'\}$, as follows.  If $\delta \neq 0$ then we define $e_l$ to be $1/\delta^l$ times the basis element
\begin{align*}
\begin{minipage}{54mm}
\def\objectstyle{\scriptstyle}
\xymatrix@=1pt{
&&						&&				&&		&\ar@{--}[dddd]\\
&&\circ	&\ldots&  \circ \ar@{-}@/_.6pc/[rrrrrr]			&&\circ 				& & \circ \ar@{-}[ll]			&&\circ 	&\ldots&\circ &&\circ 			\\
&&\\	
&&\circ \ar@{-}[uu] 			&\ldots&\circ			&& \circ  \ar@{-}[rr]		&&\circ 			&&\circ \ar@{-}@/_.6pc/[llllll]				&\ldots&	\circ \ar@{-}[uu]	&&	\circ \ar@{-}[uu]	\\
&&						&&				&&			&}
\end{minipage} ,
\end{align*}
with $l$ northern and southern arcs connecting the nodes $r'-j$ to $r'+j$ for $j \leq l$, and $r-2l$ straight through-lines connecting the remaining nodes. 

 These idempotents are readily seen to satisfy the necessary conditions given in Definition \ref{cellDEF}.
 If $\delta=0$ and $r'$ or $r$ is at least 2 
 then one can define alternative idempotents which satisfy the necessary conditions (see \cite{coxwall}).
 \end{proof}

\begin{prop}\label{walledcaseofinflation}
  Let $(r',r)=(t+l,t+l)=(s'+m, s+m)$, let $m-l=i\in \mathbb{Z}$, and assume without loss of generality that $i\geq0$.    Let ${^mV^l}$ denote the $K\Sigma_{s',s} - K\Sigma_{t',t} $-bimodule  $e_m(A/Ae_{l+1}A)e_l$.
We have that 
  \begin{align*}
{^lV^m}\otimes_{K\Sigma_{s',s}} - : \Sigma_{s'}\times \Sigma_{s}{\text{\rm -mod}} \xrightarrow{\rm inflation} & \Sigma_{s'} \times  \Sigma_i  \times \Sigma_{s} {\text{\rm -mod}} \xrightarrow{\rm induction}\Sigma_{t'}\times\Sigma_{t}{\text{\rm -mod}}.
  \end{align*}
Similarly, we have that
\begin{align*}
{^mV^l}\otimes_{K\Sigma_{t',t}} - : \Sigma_{t'}\times\Sigma_{t}{\text{\rm -mod}}& \xrightarrow{\rm restriction}  \Sigma_{s'} \times  \Sigma_i  \times \Sigma_{s} {\text{\rm -mod}} \xrightarrow{\rm projection}\Sigma_{s'}\times \Sigma_{s}{\text{\rm -mod}} 
 \end{align*}
and we shall let $e_{l+i}N$ denote the image of $N$ under the localisation map.  
Furthermore, $$J_{l,i}/J_{l,i+1}\otimes_{K\Sigma_{t',t}} M \simeq J_{l+i,0}/J_{l+i,1}\otimes_{K\Sigma_{t'-i,t-i}}(e_{l+i}M).$$ 
\end{prop}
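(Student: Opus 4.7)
The plan is to mirror the proof of Proposition~\ref{A filtration of induction for Brauer} for the Brauer algebra, noting where the wall simplifies the combinatorics. First I would describe the bimodule $^lV^m = e_m(A/Ae_{l+1}A)e_l$ concretely: its basis consists of walled Brauer diagrams with exactly $l$ northern arcs (arranged in the canonical position on the last $2l$ nodes via $e_l$) and exactly $m$ southern arcs (arranged canonically via $e_m$), together with $t-2i$ through-lines (where $i = m-l$) connecting the first $t' - m$ nodes on the top to the first $s' - m$ nodes on the bottom (and similarly on the right-hand side of the wall). The left $K\Sigma_{t',t}$-action transitively permutes the $t'$ northern-left and $t$ northern-right nodes; the right $K\Sigma_{s',s}$-action similarly permutes the southern nodes.

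The key local computation is to identify the stabiliser of the canonical element: a left-arc configuration on the top is specified by choosing an $i$-subset of $\{1,\ldots,t'\}$, an $i$-subset of $\{1,\ldots,t\}$, and a bijection between them. Consequently the stabiliser of the reference top configuration inside $\Sigma_{t'} \times \Sigma_{t}$ is precisely $\Sigma_{t'-i} \times \Sigma_{t-i} \times \Sigma_i$, where the factor $\Sigma_i$ permutes the $i$ arcs diagonally. The essential simplification compared with the Brauer case is that \emph{no} wreath-product factor $\Sigma_2$ appears, because each arc has a distinguished left-endpoint and right-endpoint and the wall prevents flipping. From this the first half of the proposition follows by the standard identification: tensoring with $^lV^m$ sends a $K\Sigma_{s',s}$-module $M$ to $M \boxtimes K$ inflated to $\Sigma_{s'} \times \Sigma_i \times \Sigma_s$ (the quotient by the normal subgroup $\Sigma_i$ acting on the arcs) and then induced up to $\Sigma_{t'} \times \Sigma_{t}$. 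The adjoint statement for $^mV^l$ is obtained by the same argument applied to the opposite algebra, using that $i$ is the anti-involution fixing the $e_l$.

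For the filtration isomorphism, I would argue exactly as in Proposition~\ref{A filtration of induction for Brauer}. A general element $u \otimes v \otimes b$ of the subquotient $J_{l,i}/J_{l,i+1}$ has a southern row with the canonical $l$ arcs of $e_l$ and exactly $i$ extra (wall-crossing) arcs. Using the factorisation $e_{l+i} = e_{l+i}^2$ (up to a power of $\delta$ when $\delta\neq 0$, and via the alternative idempotents of \cite{coxwall} otherwise), one writes
\[
u \otimes v \otimes b \;=\; \delta^{-(l+i)}\bigl(u \otimes \epsilon_{l+i} \otimes b\bigr)\bigl(\epsilon_{l+i} \otimes v \otimes \id_{K\Sigma_{t'-i,t-i}}\bigr),
\]
where the first factor lies in $J_{l+i,0}/J_{l+i,1}$ and the second in $^{l+i}V^{l}$. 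The ambiguity in this factorisation is exactly a permutation of the $t'-i$ (respectively $t-i$) through-lines on each side of the wall, which is absorbed by tensoring over $K\Sigma_{t'-i,t-i}$. This yields the desired bimodule identification
\[
J_{l,i}/J_{l,i+1}\otimes_{K\Sigma_{t',t}} M \;\simeq\; J_{l+i,0}/J_{l+i,1}\otimes_{K\Sigma_{t'-i,t-i}}(e_{l+i}M).
\]

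The only step requiring real care is the verification that the stabiliser is $\Sigma_{s',s} \times \Sigma_i$ (not something involving a wreath factor), since this distinguishes the walled case from the ordinary Brauer case; once this is in hand, the remaining arguments are formal transcriptions of those in Proposition~\ref{A filtration of induction for Brauer}, and I would simply indicate this rather than repeat the computation.
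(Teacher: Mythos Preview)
Your proposal is correct and follows essentially the same approach as the paper: both identify the key difference from the Brauer case as the stabiliser of the canonical arc configuration being the diagonal $\Sigma_i < \Sigma_i \times \Sigma_i$ (rather than $\Sigma_2 \wr \Sigma_i$, since wall-crossing arcs have distinguished endpoints), and then defer to the argument of Proposition~\ref{A filtration of induction for Brauer} for the factorisation of $J_{l,i}/J_{l,i+1}$. Your write-up is in fact more detailed than the paper's four-sentence proof; note only the minor slip that what you describe as $^lV^m = e_m(A/Ae_{l+1}A)e_l$ is in the paper's notation $^mV^l$.
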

\begin{proof}
In the case of the Brauer algebra, the globalisation module was a transitive permutation module, with the stabiliser of a set of arcs given by $\Sigma_2\wr\Sigma_i\leq\Sigma_{2i}$.  In the case of the walled Brauer algebra, the globalisation module is given by the transitive permutation module for the group $\Sigma_i \times \Sigma_i$, with basis consisting of diagrams with arcs which cross the wall. The stabiliser of a given element, say
\begin{align*}
\begin{minipage}{54mm}
\def\objectstyle{\scriptstyle}
\xymatrix@=1pt{
&&						&&				&&		&\ar@{--}[dd]\\
&&\circ	&\ldots&  \circ \ar@{-}@/_.6pc/[rrrrrr]			&&\circ 				& & \circ \ar@{-}[ll]			&&\circ 	&\ldots&\circ &&\circ 		\\	&&						&&				&&			&}
\end{minipage} ,
\end{align*}
  is given by the diagonal embedding $\Sigma_i < \Sigma_i \times \Sigma_i$. With the diagrams in place, we may now argue as in the case of the classical Brauer algebra.
\end{proof}

\subsection{Modified tableaux}

In the case of the walled Brauer algebra we shall always take $(r',r)= (t'+l, t+l)= (s'+m, s+m)$.  
We shall take $i = m-l \in \mathbb{Z}$. 
 In this section we shall only need to consider the case of $m-l \geq 0$, as before.

\subsubsection{Definitions and examples}A bicomposition  $\lambda=(\lambda^a,\lambda^b)  \vdash (t', t)$  can be illustrated by a generalised Young diagram, $[\lambda^a,\lambda^b]$, as illustrated below,
\begin{align*}
&\begin{minipage}{54mm}
\def\objectstyle{\scriptstyle}
\xymatrix@=6pt{
&&\ast	&\ast 	\\
 \ast  &  \ast  &	\ast	&\ast 	\\
&&&& \ast 	&  \ast  &	\ast	 	\\
&&&& \ast   &\ast 			\\
 	 		}
\end{minipage} 
\  \text{ is a $((4,2),(3,2))$ Young diagram.}
\end{align*}

 A $(\lambda, i)$-\emph{diagram} arises from a pair $(\lambda, i)$ by taking the Young diagram $[\lambda^a,\lambda^b]$ and connecting $i$ nodes from the upper left half of the diagram to the lower right half of the diagram, and neglecting the order of points in the row.  

Define an equivalence relation on modified diagrams by neglecting the order of the points in the row.  The  \emph{row-standard modified diagrams} $\sigma \in T^i_\lambda$, cosets, and $[\sigma]$ are defined as before.
 For example
\begin{align*}
&\begin{minipage}{54mm}
\def\objectstyle{\scriptstyle}
\xymatrix@=6pt{
&&\ast	&\ast 	\\
 \ast  &  \ast  &	\ast	&\ast 	\\
&&&& \ast\ar@{-}[ull]  	&  \ast  &	\ast	\ar@{-}@/_1pc/[uulll]	\\
&&&& \ar@{-}@/^1pc/[uullll]\ast   &\ast 			\\
		}
\end{minipage} 
\ \text{ is a $((4,2), (3,2),3)$-diagram.}
\end{align*}

A $(\lambda, i)$-\emph{tableau} arises from a row-standard $(\lambda, i)$-diagram by replacing the first $t'-i$ unconnected nodes with the integers $\{1, \ldots, t'-i\}$ in some order, and the final $t-i$ unconnected nodes with the integers $\{t' +1+i, \ldots, t'+t\}$.  For example,
\begin{align*}
&\begin{minipage}{44mm}
\def\objectstyle{\scriptstyle}
\xymatrix@=6pt{
&&2	&\ast 	\\
 \ast  &  1 &	\ast	&3	\\
&&&& \ast\ar@{-}[ull]  	&  11 &	\ast	\ar@{-}@/_1pc/[uulll]	\\
&&&& \ast  \ar@{-}@/^1pc/[uullll] &10	\\
}
\end{minipage} 
\ \text{ is a $((4,2), (3,2,1),3)$-tableau.} 
\end{align*}

A \emph{row-standard} $(\lambda, i)$-\emph{tableau} is given by replacing the unconnected nodes with  numbers that increase along the rows when read from left to right.

 For a given row-standard diagram, $\sigma \in T_\lambda^i$, we define the \emph{restricted diagram}, $\lambda\! \!\downarrow_\sigma$, to be the bicomposition of $(t'-i,t-i)$ obtained from $\lambda$ by deleting the connected nodes.
The dominance order, construction of semistandard tableaux etc. are all defined analogously to the Brauer algebra case.   
  
 \subsubsection{Diagrammatic coset representatives} \label{flibjib}
 The following lemma illustrates the connection between modified tableaux and the double cosets in which we are interested.

 \begin{lem}
A set of double coset representatives for $\Sigma_{\lambda^a,\lambda^b} \backslash \Sigma_{t',t} / \Sigma_{\mu^a,i,\mu^b}$ is given by $\mathcal{D}_{\lambda\downarrow, \mu}^i = \{ \sigma \otimes \epsilon_{l+i} \otimes d_{\lambda\downarrow_\sigma, \mu} : \sigma \in T^i_\lambda, d_{\lambda\downarrow_\sigma, \mu} \in \mathcal{D}_{\lambda\downarrow_\sigma, \mu}\}$.

A set of coset representatives for $\Sigma_{\lambda^a,\lambda^b} / (\Sigma_{\lambda^a,\lambda^b}\cap \Sigma_{\mu^a,i,\mu^b})$ is given by $\mathcal{D}^i_\mu=\{\sigma \otimes \epsilon_{l+i} \otimes d_{\mu} : \sigma \in T^i_{1^t}, d_{\mu} \in \mathcal{D}_\mu\cap \Sigma_{\lambda \downarrow_\sigma} \}.$
\end{lem}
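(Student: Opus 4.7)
The plan is to mimic the proof of Lemma \ref{ASETOFBRAUERCOSETS} given for the Brauer algebra, with the single substantive change being that the stabiliser of the arc configuration appearing in Proposition \ref{walledcaseofinflation} is now the diagonally embedded $\Sigma_i < \Sigma_i \times \Sigma_i$ (acting as simultaneous permutations of the top endpoints and bottom endpoints of the $i$ arcs crossing the wall), rather than $\Sigma_2\wr\Sigma_i$. In particular, the factor $\Sigma_{\mu^a,i,\mu^b}=\Sigma_{\mu^a}\times\Sigma_i\times\Sigma_{\mu^b}$ in the statement reflects exactly this diagonal $\Sigma_i$ sitting between the two halves.

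First I would arrange the elements of $T^i_{1^{(t',t)}}$ horizontally to obtain all possible configurations of $i$ arcs, each with one endpoint among the first $t'$ nodes and one among the last $t$ nodes, such that every node meets at most one arc. The group $\Sigma_{t',t}=\Sigma_{t'}\times\Sigma_t$ acts by independently permuting the northern and southern endpoints of these diagrams, and the stabiliser of the distinguished diagram whose arcs join $t'-j$ to $t'+j$ for $0\le j<i$ is precisely $\Sigma_{t'-i}\times\Sigma_i\times\Sigma_{t-i}$ (with the middle $\Sigma_i$ acting diagonally). Hence $T^i_{1^{(t',t)}}$ is in bijection with the left cosets $\Sigma_{t',t}/(\Sigma_{t'-i}\times\Sigma_i\times\Sigma_{t-i})$.

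Next I would pass to double cosets. Taking $\Sigma_{\lambda^a,\lambda^b}$-orbits on $T^i_{1^{(t',t)}}$ amounts to permuting nodes within rows of the bidiagram $[\lambda^a,\lambda^b]$, and by the very definition of row-standard $(\lambda,i)$-diagrams the orbit representatives are precisely $T^i_\lambda$. This gives $T^i_\lambda$ as a set of double coset representatives for $\Sigma_{\lambda^a,\lambda^b}\backslash\Sigma_{t',t}/(\Sigma_{t'-i}\times\Sigma_i\times\Sigma_{t-i})$. Refining further by multiplication on the right by $\mathcal{D}_{\lambda\downarrow_\sigma,\mu}$ (the classical set of Young double coset representatives of $\Sigma_{\lambda\downarrow_\sigma}\backslash\Sigma_{t'-i,t-i}/\Sigma_\mu$ provided by Section \ref{distinguish}) yields the claimed set $\mathcal{D}^i_{\lambda\downarrow,\mu}$. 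The identification with $\sigma\otimes\epsilon_{l+i}\otimes d_{\lambda\downarrow_\sigma,\mu}$ is then precisely the diagrammatic translation provided by Proposition \ref{walledcaseofinflation}.

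For the second statement I would repeat the above orbit analysis, but inside $\Sigma_{\lambda^a,\lambda^b}$: the stabiliser of a diagram $\sigma\in T^i_\lambda$ under this smaller action is $\Sigma_{\lambda^a,\lambda^b}\cap(\Sigma_{\mu^a}\times\Sigma_i\times\Sigma_{\mu^b})$, so that the orbits are again indexed by $T^i_\lambda$; refining by the usual $\mathcal{D}_\mu\cap\Sigma_{\lambda\downarrow_\sigma}$ of Section \ref{distinguish} produces the stated $\mathcal{D}^i_\mu$. The only point requiring any real care (and the one I would expect to be the main obstacle) is bookkeeping of the diagonal $\Sigma_i$ versus the hyperoctahedral $\Sigma_2\wr\Sigma_i$: once one observes, as in Proposition \ref{walledcaseofinflation}, that the walled Brauer arcs are oriented across the wall and so only one copy of $\Sigma_i$ acts freely while the other is forced to match it, the Brauer-algebra argument carries over verbatim.
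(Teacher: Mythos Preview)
Your proposal is correct and follows essentially the same approach as the paper: arrange the diagrams horizontally, identify the stabiliser of the distinguished arc configuration as $\Sigma_{t'-i}\times\Sigma_i\times\Sigma_{t-i}$ with the middle factor coming from the diagonal embedding $\Sigma_i\hookrightarrow\Sigma_i\times\Sigma_i$, and then adapt the argument of Lemma~\ref{ASETOFBRAUERCOSETS}. In fact you supply more detail than the paper, which after setting up the diagrams simply says ``one can now adapt the proof of Proposition~\ref{ASETOFBRAUERCOSETS}''; your explicit handling of the refinement by $\mathcal{D}_{\lambda\downarrow_\sigma,\mu}$ and of the second (single-coset) statement is a faithful expansion of that instruction.
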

 
 \begin{proof}
Arranging the elements of $T^i_{1^t}$ horizontally we get obtain all diagrams with $t'+t$ dots, a wall dividing the first $t'$ dots from the final $t$ dots, and $i$ edges 
crossing the wall.  We have that $\Sigma_{t',t}$ acts by transitively permuting these diagrams and that $\Sigma_{t'-i} \times \Sigma_i \times \Sigma_{t-i}$ is the stabiliser of the following diagram:
\begin{align*}
\begin{minipage}{64mm}
\def\objectstyle{\scriptstyle}
\xymatrix@=1pt{
&&&&		&&		&&		&&\ar@{--}[ddd]	\\
&&\circ &&\ldots	&&\circ	&&\circ	&&			&&\circ	\ar@{-}@/^.43pc/[llll] &&\circ\ar@{-}@/^.85pc/[llllllll] &&\ldots	&&\circ 	&&\circ	  \\ &\\
&&	&&		&&		&&		&&			&&&&}
\end{minipage},
\end{align*}
where the $l+i$ arcs join the nodes $t'+1-j$ and $t'+j$ for $j \leq l+i$.  We remark that the stabiliser of this element comes from the diagonal embedding of $\Sigma_i \hookrightarrow \Sigma_i \times \Sigma_i$.  

With the diagrams in place, one can now adapt the proof of Proposition \ref{ASETOFBRAUERCOSETS}.
\end{proof}

\subsection{Filtrations of permutation modules} 
We now show that the permutation modules have a filtration similar to that of \cite[Section 7]{Row}.  
\begin{prop}\label{flibbydejibjob}
The permutation modules, $M(\lambda,l)$, for the algebra have a filtration with subquotients of the form $\oplus_{\sigma \in T^i_\lambda}\G_{l+i} (M(\lambda\!\downarrow_\sigma)) \simeq J_{l,i}/J_{l,i+1} \otimes_{K\Sigma_t}M(\lambda)$.  
\end{prop}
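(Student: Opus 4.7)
The plan is to mirror the proof of Proposition \ref{subqBrauer} for the Brauer algebra, using Proposition \ref{walledcaseofinflation} and Lemma \ref{flibjib} as the walled-Brauer replacements for the analogous Brauer-algebra inputs. First I would invoke the filtration of the induction functor described in Section \ref{wobbles}: since $M(\lambda,l) = Ae_l\otimes_{K\Sigma_{t',t}}M(\lambda)$, the chain of left ideals $Ae_l = J_{l,0}\supseteq J_{l,1}\supseteq\ldots$ induces a filtration of $M(\lambda,l)$ with $i$th subquotient $(J_{l,i}/J_{l,i+1})\otimes_{K\Sigma_{t',t}}M(\lambda)$. The last clause of Proposition \ref{walledcaseofinflation} rewrites this as $(J_{l+i,0}/J_{l+i,1})\otimes_{K\Sigma_{t'-i,t-i}}(e_{l+i}M(\lambda))$, and the top-layer observation from Section \ref{wobbles} identifies this with $\G_{l+i}(e_{l+i}M(\lambda))$.

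The real content is therefore the decomposition of $e_{l+i}M(\lambda)$ as a $K\Sigma_{t'-i,t-i}$-module. By Proposition \ref{walledcaseofinflation}, applying $e_{l+i}$ on the left amounts to restricting $M(\lambda) = K\!\!\uparrow^{\Sigma_{t',t}}_{\Sigma_{\lambda^a,\lambda^b}}$ down to $\Sigma_{t'-i}\times\Sigma_i\times\Sigma_{t-i}$ and then projecting onto the $\Sigma_i$-coinvariants of the middle factor (which is the diagonal $\Sigma_i$ stabilising the arcs). By Mackey's theorem, the restriction decomposes as a direct sum indexed by double cosets in $\Sigma_{\lambda^a,\lambda^b}\backslash \Sigma_{t',t}/(\Sigma_{t'-i}\times\Sigma_i\times\Sigma_{t-i})$, which Lemma \ref{flibjib} identifies with $T^i_\lambda$.

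Fixing $\sigma\in T^i_\lambda$, I would then identify the corresponding Young subgroup obtained by conjugation and intersection as exactly $\Sigma_{\lambda\downarrow_\sigma}\times\Sigma_i^{\mathrm{diag}}$, where $\Sigma_i^{\mathrm{diag}}$ is the stabiliser of the $i$ arcs (realised via the diagonal embedding $\Sigma_i\hookrightarrow \Sigma_i\times\Sigma_i$ noted in Lemma \ref{flibjib}). Taking $\Sigma_i$-coinvariants strips this diagonal factor off, leaving $K\!\!\uparrow^{\Sigma_{t'-i,t-i}}_{\Sigma_{\lambda\downarrow_\sigma}} = M(\lambda\!\downarrow_\sigma)$. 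Assembling the pieces and using that $\G_{l+i}$ commutes with direct sums yields
\begin{align*}
J_{l,i}/J_{l,i+1}\otimes_{K\Sigma_{t',t}}M(\lambda) \;\simeq\; \G_{l+i}\!\left(\bigoplus_{\sigma\in T^i_\lambda} M(\lambda\!\downarrow_\sigma)\right) \;=\; \bigoplus_{\sigma\in T^i_\lambda}\G_{l+i}(M(\lambda\!\downarrow_\sigma)),
\end{align*}
which is the claim.

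The main obstacle is verifying the explicit identification of the stabiliser as $\Sigma_{\lambda\downarrow_\sigma}\times\Sigma_i^{\mathrm{diag}}$: unlike the Brauer case where one deals with $\Sigma_2\wr\Sigma_i$, here one must carefully track how the walled structure forces each arc to connect a node from $[\lambda^a]$ with one from $[\lambda^b]$, so that the stabiliser of an arc configuration inside $\Sigma_{\lambda^a}\times\Sigma_{\lambda^b}$ is genuinely a diagonal $\Sigma_i$ rather than the full $\Sigma_i\times\Sigma_i$. Once this bookkeeping is made explicit via the diagrammatic coset representatives of Lemma \ref{flibjib}, the remaining arguments are formal consequences of Mackey decomposition and the filtration of the induction functor.
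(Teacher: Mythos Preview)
Your approach is essentially the paper's, just packaged via Mackey decomposition rather than by directly picking $\Sigma_i$-orbit representatives from the permutation basis of $M(\lambda)$; both reduce the statement to showing $e_{l+i}M(\lambda)\simeq\bigoplus_{\sigma\in T^i_\lambda}M(\lambda\!\downarrow_\sigma)$ via Proposition~\ref{walledcaseofinflation}.

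One point needs care. Your identification of the Mackey stabiliser as $\Sigma_{\lambda\downarrow_\sigma}\times\Sigma_i^{\mathrm{diag}}$ is not correct in general: if the $i$ arc-endpoints in $[\lambda^a]$ (or in $[\lambda^b]$) lie in distinct rows, then the diagonal $\Sigma_i$ is not contained in (the relevant conjugate of) $\Sigma_{\lambda}$, so $H_\sigma\cap\Sigma_i^{\mathrm{diag}}$ is a proper subgroup. What \emph{is} true, and is all you need, is that the image of $H_\sigma$ under the projection $\Sigma_{t'-i}\times\Sigma_i\times\Sigma_{t-i}\twoheadrightarrow\Sigma_{t'-i,t-i}$ equals $\Sigma_{\lambda\downarrow_\sigma}$: any $(g,k)\in\Sigma_{\lambda\downarrow_\sigma}$ lifts via $h=1$, and conversely if $(g,h,k)\in H_\sigma$ then $g$ and $k$ act only on the unconnected nodes and must preserve their row structure. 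Since $\Sigma_i$-coinvariants of $K\!\!\uparrow^{G}_{H_\sigma}$ give $K\!\!\uparrow^{G/\Sigma_i}_{H_\sigma\Sigma_i/\Sigma_i}$, your conclusion $M(\lambda\!\downarrow_\sigma)$ is correct. The paper avoids this wrinkle by first passing to $\Sigma_i$-orbits on the permutation basis and only then computing stabilisers inside $\Sigma_{t'-i,t-i}$, where the intersection of two Young subgroups is visibly $\Sigma_{\lambda\downarrow_\sigma}$.
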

\begin{proof}
By Proposition \ref{walledcaseofinflation} we  need to show that $e_{l+i}M(\lambda) = M(\lambda) /\langle m-hm : h \in \Sigma_i \rangle$ is isomorphic to $\oplus_{\sigma \in T^i_\lambda} M (\lambda\!\!\downarrow_\sigma)$.
We can choose a basis of $e_{l+i}M(\lambda)$ to be a subset of the permutation basis of $M(\lambda)$ by choosing a representative of each $\Sigma_{i}$-orbit. 

It is clear that the induced action of $\Sigma_{t'-i,t-i}$ is again given by permutation of these elements.  We have that the stabiliser of an element $m_j \boxtimes m_k$ is given by the intersection of $\Sigma_{t'-i,t-i}$ and the stabiliser of $m_j \boxtimes m_k$ in $\Sigma_{t',t}$.  As both are (products of) Young subgroups, their intersection is one too.
Finally, note that the permutation modules which occur are identified by $\Sigma_\lambda$-orbits, and therefore
$e_{l+i}M(\lambda) \simeq \oplus_{\sigma \in T^i_\lambda} M (\lambda\!\!\downarrow_\sigma)$.
\end{proof}

\begin{cor}[`Specht' series of permutation modules]
The $BW_K({r',r},\delta)$-module $M(\lambda,l)$ has a filtration $M(\lambda,l)=M_1 \supseteq M_2 \supseteq \ldots \supseteq M_{k+1}=0$ such that there exists $\mu^j\vdash t-2i$ with $M_j/M_{j+1} \simeq \Delta(\mu^j,l+i)$. Moreover, for each partition $\mu$ the number of $\mu^j$ equal to $\mu$ is the number of semistandard $\mu$-tableaux of type $(\lambda,i)$.
\end{cor}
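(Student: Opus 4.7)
The plan is to combine the filtration given by Proposition \ref{flibbydejibjob} with the classical Specht filtration of Young permutation modules for symmetric groups, and then exploit exactness of the globalisation functor to pass this refinement up to $M(\lambda,l)$.

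First, I would invoke Proposition \ref{flibbydejibjob} to obtain a filtration of $M(\lambda,l)$ whose successive quotients have the form
\begin{align*}
\bigoplus_{\sigma \in T^i_\lambda}\G_{l+i}\bigl(M(\lambda\!\!\downarrow_\sigma)\bigr),
\end{align*}
as $i$ runs over the non-negative integers with $l+i \le \minl\{r',r\}$. Since the index set $T^i_\lambda$ is finite, each direct sum can be refined to a filtration whose layers are the individual summands $\G_{l+i}(M(\lambda\!\!\downarrow_\sigma))$, one for each $\sigma \in T^i_\lambda$.

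Next, for each fixed $\sigma$, note that $\lambda\!\!\downarrow_\sigma$ is a bicomposition of $(t'-i,t-i)$, so $M(\lambda\!\!\downarrow_\sigma)$ is a Young permutation module for the product $\Sigma_{t'-i}\times\Sigma_{t-i}$, which is the input algebra $B_{l+i}$ of the walled Brauer algebra. By the classical Specht filtration for symmetric groups (the Corollary following the Murphy basis theorem, applied componentwise to each factor of the product), $M(\lambda\!\!\downarrow_\sigma)$ carries a filtration with subquotients isomorphic to Specht modules $\mathcal{S}(\mu)$, where $\mu\vdash(t'-i,t-i)$, and the number of subquotients isomorphic to $\mathcal{S}(\mu)$ equals the number of semistandard $\mu$-tableaux of type $\lambda\!\!\downarrow_\sigma$.

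Now I would apply the globalisation functor $\G_{l+i}$; by Proposition \ref{HHKP1} it is exact, so it carries this Specht filtration to a filtration of $\G_{l+i}(M(\lambda\!\!\downarrow_\sigma))$ whose subquotients are $\G_{l+i}(\mathcal{S}(\mu))=\Delta(\mu,l+i)$, with the same multiplicities. Splicing these filtrations into the ones from the first step yields the desired filtration of $M(\lambda,l)$ by standard modules. Finally, the multiplicity of $\Delta(\mu,l+i)$ totals to
\begin{align*}
\sum_{\sigma\in T^i_\lambda}\#\{\S \in T_0(\mu,\lambda\!\!\downarrow_\sigma)\},
\end{align*}
which by the definition of semistandard $\mu$-tableau of type $(\lambda,i)$ equals $\#T_0^i(\mu,\lambda)$, as required. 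The only mild subtlety, and thus the main thing to check carefully, is that ordering the refinement consistently with the dominance order on modified tableaux preserves the filtration property as one interlaces the contributions from different values of $i$ and different $\sigma$; this follows from the fact that the $J_{l,i}$ form a chain of $A$-submodules of $Ae_l$, so the refinements within each subquotient are automatically compatible with the ambient filtration.
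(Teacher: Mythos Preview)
Your proposal is correct and follows essentially the same approach as the paper: the paper states this corollary without proof, treating it as immediate from Proposition~\ref{flibbydejibjob} together with the classical Specht filtration of Young permutation modules and the exactness of $\G_{l+i}$ (just as in the Brauer case, where the analogous corollary is preceded by the remark that the proposition ``allows us to immediately conclude that semistandard modified tableaux give a filtration of permutation modules by standard modules''). Your additional comment on compatibility of the refinements via the chain $J_{l,i}$ simply makes explicit what the paper leaves tacit.
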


\subsection{A Murphy basis} By proceeding as in the Brauer algebra case, we get a Murphy basis for the permutation modules of the walled Brauer algebra. 
\begin{prop}[A Murphy basis]
The permutation module $M(\lambda,l)$ has a basis
in terms of modified tableaux as follows:
\begin{align*}
\{ v \otimes [\sigma] \otimes m_{ \mathfrak{s}\T^\sigma} :v \in {V_{l+i}}, \T^\sigma \in T^i_0(\omega, \lambda), \mathfrak{s} \in \Std(\omega) \text{ for some } \omega \vdash t\}.
\end{align*}
\end{prop}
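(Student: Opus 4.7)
The plan is to reuse the Brauer-algebra argument of Section~\ref{Smurphy} almost verbatim, substituting the walled-Brauer filtration of Proposition~\ref{flibbydejibjob} for Proposition~\ref{subqBrauer} at each step. By Proposition~\ref{flibbydejibjob}, the module $M(\lambda,l)$ admits a filtration whose $i$th subquotient is $\bigoplus_{\sigma \in T^i_\lambda} \G_{l+i}(M(\lambda\!\downarrow_\sigma))$, so by induction on the filtration it suffices to fix $\sigma \in T^i_\lambda$ and produce a basis of the single summand $\G_{l+i}(M(\lambda\!\downarrow_\sigma))$ indexed by those triples $(v, [\sigma], m_{\mathfrak{s}\T^\sigma})$ whose underlying modified diagram is precisely $\sigma$.

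With $\sigma$ fixed, the isomorphism of Proposition~\ref{flibbydejibjob} realises an arbitrary element of the summand in the form $v \otimes [\sigma] \otimes z$, with $v \in V_{l+i}$ and $z$ an element of the inner symmetric-group permutation module $M(\lambda\!\downarrow_\sigma)$ for the product $\Sigma_{t'-i} \times \Sigma_{t-i}$. Since $\lambda\!\downarrow_\sigma = (\lambda^a\!\downarrow_\sigma, \lambda^b\!\downarrow_\sigma)$ is a bicomposition, this inner module factors as the outer tensor product $M(\lambda^a\!\downarrow_\sigma) \boxtimes M(\lambda^b\!\downarrow_\sigma)$, and applying the classical Murphy basis (Theorem~\ref{Smurphet}) componentwise yields a $K$-basis indexed by pairs $(\mathfrak{s},\T)$ with $\T \in T_0(\omega, \lambda\!\downarrow_\sigma)$ semistandard and $\mathfrak{s} \in \Std(\omega)$, where $\omega$ ranges over bipartitions of $(t'-i, t-i)$.

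The final step is the observation that the data of a modified diagram $\sigma \in T^i_\lambda$ together with a semistandard tableau $\T$ of type $\lambda\!\downarrow_\sigma$ is precisely the packaging of a semistandard modified tableau $\T^\sigma \in T^i_0(\omega, \lambda)$ introduced in the preceding subsection; summing over $\sigma$ and over $i$ therefore assembles the subquotient bases into the claimed basis of $M(\lambda,l)$. The main obstacle is purely bookkeeping: one must keep track of the wall throughout and check that the product-group Murphy basis for $\Sigma_{t'-i} \times \Sigma_{t-i}$ is in natural bijection with the pairs of tableaux used to define $T^i_0(\omega, \lambda)$. Since Theorem~\ref{Smurphet} passes cleanly through outer tensor products, no new representation-theoretic input is required beyond what the Brauer case already supplied.
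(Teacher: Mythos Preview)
Your proposal is correct and follows precisely the route the paper intends: the paper does not spell out a proof here but simply writes ``By proceeding as in the Brauer algebra case,'' and you have done exactly that, substituting Proposition~\ref{flibbydejibjob} for Proposition~\ref{subqBrauer} and adding the one extra observation genuinely needed in the walled setting (that the inner permutation module for $\Sigma_{t'-i}\times\Sigma_{t-i}$ is an outer tensor product, so the Murphy basis of Theorem~\ref{Smurphet} applies componentwise). There is nothing to add beyond this.
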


\subsection{Homomorphisms between permutation modules}\label{frob2}
 Using the adjunction in Section \ref{ind} and the realisation of the globalisation functor in Proposition \ref{quantum}, we describe homomorphisms between permutation modules.

\begin{prop}\label{porg}
Let $\lambda \vdash (t',t)$, $\mu \vdash (s',s)$, then $\{\varphi_{\S^\sigma \T^\tau} : \S^\sigma \in T_0^\ast(\omega,\lambda), \T^\tau \in T_0^\ast(\omega,\mu)\}$ is a basis of $\Hom_{BW_{r',r}}(M(\lambda,l), M(\mu,m))$, where $\varphi_{\S^\sigma\T^\tau}$ is given by
\begin{align*}
\varphi_{\S^\sigma \T^\tau} (\epsilon_l\otimes\epsilon_l\otimes x_\lambda) &= [\sigma]	\otimes[\tau]	\otimes m_{{\S  \T}}.  
\end{align*}
\end{prop}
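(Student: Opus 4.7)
The plan is to mirror the proof of Proposition \ref{piranese}, making only one methodological change: where that proof invoked the Green--Dipper--James basis (Theorem \ref{DJbasisofschur}) to parametrise the through-line part, we shall instead invoke the semistandard basis (Theorem \ref{cellbasisofschur}), exactly as flagged in the remark after Proposition \ref{piranese}. The combinatorial ingredients have already been set up in the walled Brauer section: the globalisation/localisation realisation in Proposition \ref{walledcaseofinflation}, the filtration of permutation modules in Proposition \ref{flibbydejibjob}, and the double coset description in Lemma \ref{flibjib}. Thus the hard work is already done and the proof will be essentially combinatorial bookkeeping.

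First I would apply the adjunction from Section \ref{ind} to rewrite
\begin{align*}
\Hom_{BW_{r',r}}(M(\lambda,l),M(\mu,m))
\simeq \Hom_{\Sigma_{t',t}}\!\bigl(M(\lambda),\,e_l\bigl(\textstyle\bigoplus_i J_{m,i}/J_{m,i+1}\otimes_{K\Sigma_{s',s}}M(\mu)\bigr)\bigr),
\end{align*}
where the direct sum over $i$ appears because $K\Sigma_{t',t}$ acts by permuting arcs and cannot create new ones. Next, apply Proposition \ref{flibbydejibjob} to identify each subquotient $J_{m,i}/J_{m,i+1}\otimes_{K\Sigma_{s',s}}M(\mu)$ with $\bigoplus_{\tau\in T^i_\mu}\G_{m+i}(M(\mu\!\downarrow_\tau))$. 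Applying $e_l$ and using Lemma \ref{restrict} to kill the layers with index $< l$ (and re-indexing by $j=\max\{l-m+i,i\}$ so that exactly the relevant $\tau\in T^j_\mu$ survive), the Hom space becomes
\begin{align*}
\bigoplus_{\tau\in T^j_\mu}\Hom_{\Sigma_{t',t}}\!\bigl(K\!\!\uparrow^{\Sigma_{t',t}}_{\Sigma_{\lambda^a,\lambda^b}},\,K\!\!\uparrow^{\Sigma_{t',t}}_{\Sigma_{\mu^a\!\downarrow_\tau}\times\Sigma_i\times\Sigma_{\mu^b\!\downarrow_\tau}}\bigr).
\end{align*}

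Now I would fix a summand indexed by $\tau$, apply Frobenius reciprocity, and then Mackey decomposition across $\Sigma_{\lambda^a,\lambda^b}\backslash\Sigma_{t',t}/\Sigma_{\mu^a\!\downarrow_\tau}\times\Sigma_i\times\Sigma_{\mu^b\!\downarrow_\tau}$, using the distinguished double coset representatives supplied by Lemma \ref{flibjib}. This decomposes the Hom into a direct sum indexed by $\sigma\in T^i_\lambda$ of
\begin{align*}
\Hom_{\Sigma_{\lambda^a,\lambda^b}}\!\bigl(K,\,K\!\!\uparrow^{\Sigma_{\lambda^a,\lambda^b}}_{\Sigma_{\lambda^a,\lambda^b}\cap(\Sigma_{\mu^a\!\downarrow_\tau}\times\Sigma_i\times\Sigma_{\mu^b\!\downarrow_\tau})^\sigma}\bigr),
\end{align*}
which by a further Frobenius reciprocity is $\Hom_{\Sigma_{\lambda^a\!\downarrow_\sigma,\,\lambda^b\!\downarrow_\sigma}}(M(\lambda\!\downarrow_\sigma),M(\mu\!\downarrow_\tau))$. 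At this point the key substitution is made: instead of expanding this latter space in the Dipper--James basis $\{\varphi_d\}$, I would expand it in the semistandard basis $\{\varphi_{\S\T}\}$ from Theorem \ref{cellbasisofschur}, indexed by pairs of semistandard $\omega$-tableaux $\S\in T_0(\omega,\lambda\!\downarrow_\sigma)$, $\T\in T_0(\omega,\mu\!\downarrow_\tau)$ for $\omega\vdash t-i$. Re-attaching the data $\sigma,\tau$ produces exactly the parametrising set $\S^\sigma\in T_0^\ast(\omega,\lambda)$, $\T^\tau\in T_0^\ast(\omega,\mu)$.

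Finally I would trace a basis element backwards through the chain of isomorphisms to confirm the explicit formula $\varphi_{\S^\sigma\T^\tau}(\epsilon_l\otimes\epsilon_l\otimes x_\lambda)=[\sigma]\otimes[\tau]\otimes m_{\S\T}$: the factor $[\sigma]$ arises from summing over the $\Sigma_{\lambda^a,\lambda^b}$-orbit of $\sigma$ (exactly the $[\sigma]$ defined in Section \ref{mod brauer}), the factor $[\tau]$ enters through the identification of $\G_{l+i}(M(\mu\!\downarrow_\tau))$ as a subquotient of $M(\mu,m)$ given by the walled Brauer analogue of Proposition \ref{subqBrauer}, and the through-line factor $m_{\S\T}$ comes from Theorem \ref{cellbasisofschur}. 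The main obstacle, though a mild one, is the careful bookkeeping needed to check that the embedding $\sigma\colon\Sigma_i\hookrightarrow\Sigma_i\times\Sigma_i$ from Lemma \ref{flibjib} (which replaces the $\Sigma_2\wr\Sigma_i$ of the Brauer case) interacts correctly with Mackey decomposition so that the parametrising set is genuinely $T^i_\lambda\times T^j_\mu$ with no overcounting; once this is checked, the dimension count matches and the basis statement follows.
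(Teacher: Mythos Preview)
Your proposal is correct and follows precisely the same approach as the paper: the paper's proof is the single sentence ``This is the same as the Brauer algebra case, except that we determine the through lines using the basis of Theorem \ref{cellbasisofschur}, rather than that of Theorem \ref{DJbasisofschur},'' and you have simply unpacked this in full detail using the walled Brauer analogues (Propositions \ref{walledcaseofinflation} and \ref{flibbydejibjob}, Lemma \ref{flibjib}) of the ingredients used in Proposition \ref{piranese}.
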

\begin{proof}
This is the same as the Brauer algebra case, except that we determine the through lines using the basis of Theorem \ref{cellbasisofschur}, rather than that of Theorem \ref{DJbasisofschur}.
\end{proof}

\section{The partition algebra}
The partition algebra is more complicated than the Brauer and walled Brauer algebras.  In particular the globalisation functor is more complicated; this leads to the need to define modified tableaux and bi-tableaux.  
These bi-tableaux are needed for the proofs of the main results (Murphy basis theorem etc), but not for their statements.  
\subsection{Definitions and examples}The partition algebra, $P_K(r,\delta)$, has a basis given by diagrams of $r$ northern and $r$ southern nodes, with edges connecting arbitrarily many of  these nodes.  The Brauer algebra can be seen as the subalgebra in which all edges connect precisely two nodes.  The multiplication is defined by concatenation similarly to the Brauer algebra case. 

In \cite{HHKP} the partition algebra with parameter $\delta\neq0$ was shown to be cellularly stratified with inflation decomposition
  \begin{align*}
P_K(r,\delta) = \oplus_{l} V_l \otimes V_l \otimes \Sigma_{r-l}.
\end{align*}
where $V_l$ is the vector space spanned by dangles of the following form.

\begin{defn}
A partition dangle is a disjoint union of the integers $\{1, \ldots, r\}$ into subsets $\chi_j$ (of cardinality $1\leq |\chi_j| \leq r$) each of which is annotated with an element of $\{ \ast, \circ\}$.   We refer to an annotated set $\chi_j$ as an arc, and to be more specific $\ast$-\emph{arcs} and $\circ$-\emph{arcs}.  An $(r,l)$-dangle is a dangle in which there are precisely $r-l$ of the $\ast$-arcs. 
\end{defn}
We can geometrically represent an $(r,l)$-dangle on a set of $r$ nodes by drawing an arc between the nodes labelled by the elements of the set $\chi_j$ and attaching a through-line to the $\ast$-arcs.

\begin{eg}
The algebra $P_K(2,\delta)$ has three layers in its iterated inflation structure.  The first layer is 2-dimensional with basis $V_0 \otimes V_0 \otimes \Sigma_2\simeq \Sigma_2$ with the obvious diagrammatic presentation. 

 The second layer is  $V_1 \otimes V_1 \otimes \Sigma_1$ where $V_1$ is 3-dimensional with basis $\{(1,2)^\ast\}, \{(1)^\ast, (2)^\circ\}$ and $ \{(1)^\circ, (2)^\ast\}$.  This layer is therefore 9-dimensional with diagrammatic presentation:
\begin{align*}
\begin{minipage}{54mm}
\def\objectstyle{\scriptstyle}
\xymatrix@=1pt{
 \circ \ar@{-}[rr] \ar@{-}[dd]	 &&  \circ   			\\
&&\\	
 \circ	 &&  \circ  }
\end{minipage} \  \ , \  
\begin{minipage}{54mm}
\def\objectstyle{\scriptstyle}
\xymatrix@=1pt{
 \circ \ar@{-}[rr] \ar@{-}[dd]	 &&  \circ   			\\
&&\\	
 \circ	 \ar@{-}[rr]&&  \circ  }
\end{minipage} \  \ , \  \
\begin{minipage}{54mm}
\def\objectstyle{\scriptstyle}
\xymatrix@=1pt{
 \circ   \ar@{-}[dd]	 &&  \circ   			\\
&&\\	
 \circ	 \ar@{-}[rr]&&  \circ  }
\end{minipage} \  \ , \  \
\begin{minipage}{54mm}
\def\objectstyle{\scriptstyle}
\xymatrix@=1pt{
 \circ   	 &&  \circ   	\ar@{-}[dd]		\\
&&\\	
 \circ	 \ar@{-}[rr]&&  \circ  }
\end{minipage} \  \ , \  \
\begin{minipage}{54mm}
\def\objectstyle{\scriptstyle}
\xymatrix@=1pt{
 \circ   	 \ar@{-}[rr] &&  \circ   	\ar@{-}[dd]		\\
&&\\	
 \circ	&&  \circ  }
\end{minipage} \  \ , \  \
\begin{minipage}{54mm}
\def\objectstyle{\scriptstyle}
\xymatrix@=1pt{
 \circ   	  &&  \circ   	\ar@{-}[dd]		\\
&&\\	
 \circ	&&  \circ  }
\end{minipage} \  \ , \  \
\begin{minipage}{54mm}
\def\objectstyle{\scriptstyle}
\xymatrix@=1pt{
 \circ   	\ar@{-}[dd]  &&  \circ   			\\
&&\\	
 \circ	&&  \circ  }
\end{minipage} \  \ , \  \
\begin{minipage}{54mm}
\def\objectstyle{\scriptstyle}
\xymatrix@=1pt{
 \circ   	  &&  \circ   	\ar@{-}[ddll]		\\
&&\\	
 \circ	&&  \circ  }
\end{minipage} \  \ , \  \
\begin{minipage}{54mm}
\def\objectstyle{\scriptstyle}
\xymatrix@=1pt{
 \circ   	\ar@{-}[ddrr]	  &&  \circ   		\\
&&\\	
 \circ	&&  \circ  }
\end{minipage}  
\end{align*}
The third layer is 4-dimensional with  $V_2 $ spanned by $\{(1)^\circ,(2)^\circ	\}$ and $\{(1, 2)^\circ	\}$.
\end{eg}
If $\delta \neq 0$ then we define $e_l$ to be $1/\delta$ times the basis element
\begin{align*}
\begin{minipage}{54mm}
\def\objectstyle{\scriptstyle}
\xymatrix@=1pt{
&& \circ 			&&\cdots	&&\circ  			&&\circ 		 	&&\circ  \ar@{-}[ll] \ar@{-}[rr]	 &&\cdots	&&\circ 	 	\ar@{-}[ll]&&\circ\ar@{-}[ll]  \\	&\\
&&\circ \ar@{-}[uu] 	&&\cdots	&&\circ  \ar@{-}[uu] 	&&\circ 		 	&&\circ  \ar@{-}[ll] \ar@{-}[rr]	 &&\cdots	&&\circ 	 	 \ar@{-}[ll]&&\circ \ar@{-}[ll]
}
\end{minipage}&, 
\end{align*}
with $r-l$ straight through-lines (coming from $\ast$-arcs of cardinality 1) and one northern (and one southern) $\circ$-arc joining the final $l$ northern (respectively southern) nodes. 
\subsection{The globalisation functor}\label{Brauerinflation}
We now discuss the globalisation and localisation functors for the partition algebra.  
The appearance of $\circ$-arcs of arbitrary length, $a$, is easy to cope with, if there exist $b$ such arcs, the stabiliser of such an element is $\Sigma_a \wr \Sigma_b$ and we inflate along the stabiliser as before.  

More interesting are the $\ast$-arcs.  The important point is that the left symmetric group action does not see the crossings of through-lines attached to arcs of \emph{different} lengths. 

\begin{eg} Consider for example, the $P_K(3,\delta)$-module $\G_1(M(1^2))$.  This module is 12-dimensional.  We consider the $\Sigma_3$-submodule spanned by elements which have a horizontal arc on the top row.  This module has the following basis: 
\begin{align*}
\begin{minipage}{54mm}
\def\objectstyle{\scriptstyle}
\xymatrix@=1pt{
 \circ\ar@{-}[dd]   	  &&  \circ   	\ar@{-}[dd]\ar@{-}[rr]	  &&  \circ 	\\
&&\\	
 \circ	&&  \circ   &&  \circ  }
\end{minipage} \ , \
\begin{minipage}{54mm}
\def\objectstyle{\scriptstyle}
\xymatrix@=1pt{
 \circ\ar@{-}@/_.75pc/[rrrr]   	  &&  \circ   	\ar@{-}[ddll]	  &&  \circ\ar@{-}[ddll] 	\\
&&\\	
 \circ	&&  \circ   &&  \circ  }
\end{minipage} \ , \
\begin{minipage}{54mm}
\def\objectstyle{\scriptstyle}
\xymatrix@=1pt{
 \circ 	  &&  \circ   	 \ar@{-}[ll]	\ar@{-}[dd]    &&  \circ \ar@{-}[ddllll]	\\
&&\\	
 \circ	&&  \circ   &&  \circ  }
\end{minipage} \ , \
\begin{minipage}{54mm}
\def\objectstyle{\scriptstyle}
\xymatrix@=1pt{
 \circ\ar@{-}[ddrr]   	  &&  \circ   	\ar@{-}[ddll]\ar@{-}[rr]	  &&  \circ 	\\
&&\\	
 \circ	&&  \circ   &&  \circ  }
\end{minipage} \ , \
\begin{minipage}{54mm}
\def\objectstyle{\scriptstyle}
\xymatrix@=1pt{
 \circ\ar@{-}[dd]\ar@{-}@/_.75pc/[rrrr]   	  &&  \circ   	\ar@{-}[dd]	  &&  \circ 	\\
&&\\	
 \circ	&&  \circ   &&  \circ  }
\end{minipage} \ , \
\begin{minipage}{54mm}
\def\objectstyle{\scriptstyle}
\xymatrix@=1pt{
 \circ\ar@{-}[dd]   	  &&  \circ   	 \ar@{-}[ll]	  &&  \circ \ar@{-}[ddll]	\\
&&\\	
 \circ	&&  \circ   &&  \circ  }
\end{minipage}.
\end{align*}

As a right $\Sigma_2$-module this decomposes as a direct sum of 3 copies of $M(1^2)$ (one coming from each of the 3 arc configurations on the top).
As a left $\Sigma_3$-module this decomposes as  direct sum of 2 copies of $M(2,1)$ (one for each of the two bases elements of $M(1^2)$).  It is indecomposable as a $\Sigma_3-\Sigma_2$-bimodule.
\end{eg}

We decompose the $\Sigma_s-\Sigma_t$ bi-module, ${^mV^l}$, as a direct sum of permutation modules for $\Sigma_s$ labelled by bi-compositions.   Assume that $m-l\geq0$ and let  $(\xi_\ast, \xi_\circ)$ be a bi-composition such that  $\xi_\ast \vDash l$ and $\sum_{i}i(\xi_\ast^i+\xi_\circ^i)=m$.  We shall define ${^mV^l_{(\xi_\ast,\xi_\circ)}}$ to be the cyclic $\Sigma_s -\Sigma_t$-bimodule generated by the diagram $v\otimes \epsilon_l \otimes b$ defined as follows.  Let $v$ be the dangle
which consists of $r$ nodes, with a wall  separating the first $\sum_i i\xi_\ast$ nodes from the next $\sum_i i\xi_\circ$ nodes;
the northern left (respectively right) hand side of the wall consists of $\xi_\ast^i$ (respectively $\xi_\circ^i$) $*$-arcs (respectively $\circ$-arcs) of length $i$ for each $i$, arranged from left to right in order of increasing length (this leaves $m$ remaining nodes to the far right, which are all connected by one $\circ$-arc).  
Let $b$ be the identity element of $\Sigma_{r-l}$ with no crossing through-lines, and let $\epsilon_l$ be the dangle with the first $r-l$ nodes free, and the final $l$ nodes connected to one another by one $\circ$-arc.  
 
 \begin{eg}
 Take $r=12$ and $m=0$, $l=8$.  Let $(\xi_\ast,\xi_\circ) = ((2,1,1),(0,1,1))$, then ${^{0}V^{8}_{(\xi_\ast,\xi_\circ)}}$ is the $\Sigma_{12}\times \Sigma_4$ bi-module generated by
 \begin{align*}
\begin{minipage}{54mm}
\def\objectstyle{\scriptstyle}
\xymatrix@=1pt{
  \circ\ar@{-}[dd]  	  & & \circ\ar@{-}[dd]  	  & &  \circ\ar@{-}[dd]\ar@{-}[rr]   		&&\circ  &&  \circ\ar@{-}[ddll] \ar@{-}[rr]&&  \circ\ar@{-}[rr]   & & \circ&&  \circ\ar@{-}[rr]& &  \circ&&\circ\ar@{-}[rr]&&\circ\ar@{-}[rr]&&\circ 	\\
&&&&& &&&&&&&&& &&&& && \\	
 \circ 	  & & \circ 	  & &  \circ  		&&\circ  &&  \circ  \ar@{-}[rr]   &&  \circ\ar@{-}[rr]   & & \circ\ar@{-}[rr]   &&  \circ\ar@{-}[rr]   & &  \circ\ar@{-}[rr]   &&\circ\ar@{-}[rr]&&\circ\ar@{-}[rr]   &&\circ    \\ }
\end{minipage}.
 \end{align*}
Note that we have omitted the wall separating the first 7 nodes from the final 5 nodes.
 \end{eg}

We let $\Sigma_\ast \wr \Sigma_{\xi_\ast}$ denote the group $\Sigma_1 \wr \Sigma_{\xi_\ast^1}\times \Sigma_2 \wr\Sigma_{\xi_\ast^2} \times \ldots $
and let $\Sigma_{\xi_\ast}$ denote the subgroup $\Sigma_{\xi_\ast^1}\times \Sigma_{\xi_\ast^2} \times \ldots $ (and extend this notation to $\xi_\circ$).

 \begin{prop}\label{fannyflaps}
As a $\Sigma_s-\Sigma_t$-bimodule ${^mV^l}=e_m(A/Ae_{l+1}A)e_l = \oplus_{(\xi_\ast,\xi_\circ)}{^mV^l_{(\xi_\ast,\xi_\circ)}}$.  This bi-modules give rise to a globalisation functor
\begin{align*}
{^mV^l_{(\xi_\ast,\xi_\circ)}}\otimes_{\Sigma_{t}} - : \Sigma_{t}{\text{\rm -mod}} \xrightarrow{\rm restriction} \Sigma_{\xi_\ast } \xrightarrow{\rm inflation}
  \Sigma_{\ast} \wr\Sigma _{\xi_\ast} \times \Sigma_{\circ} \wr\Sigma _{\xi_{\circ}}  {\text{\rm -mod}} 
 \xrightarrow{\rm induction}\Sigma_{s}{\text{\rm -mod}}  
 \end{align*}
 and a localisation functors
\begin{align*}
 {e_{(\xi_\ast,\xi_\circ)}}  : \Sigma_{s}{\text{\rm -mod}} \xrightarrow{\rm restriction}\Sigma_{\ast} \wr\Sigma _{\xi_\ast} \times \Sigma_{\circ} \wr\Sigma _{\xi_{\circ}}  {\text{\rm -mod}} \xrightarrow{\rm projection} \Sigma_{\xi_\ast } \xrightarrow{\rm induction} \Sigma_{t}{\text{\rm -mod}},
 \end{align*}
Furthermore, $J_{l,i}/J_{l,i+1} \otimes_{K\Sigma_{t}}M \simeq J_{l+i,0}/J_{l+i,1} \otimes_{K\Sigma_{t-i}}e_{l+i}M
 $
\end{prop}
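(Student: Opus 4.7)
The plan is to adapt the proof of Proposition \ref{A filtration of induction for Brauer} to the partition-algebra setting, the two new complications being that arcs may have arbitrary length and come in two flavours ($\ast$- and $\circ$-arcs).

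First I would make the bimodule decomposition ${^mV^l}=\oplus_{(\xi_\ast,\xi_\circ)} {^mV^l_{(\xi_\ast,\xi_\circ)}}$ transparent combinatorially. A basis of ${^mV^l}$ is indexed by arrangements of labelled arcs on $r$ nodes that bridge the layer-$m$ (top) and layer-$l$ (bottom) parts of $A$; the \emph{shape} of such an arrangement, recorded exactly by the pair $(\xi_\ast,\xi_\circ)$ counting how many $\ast$-arcs and $\circ$-arcs of each length appear, is preserved both by the left $\Sigma_s$-action (which permutes nodes) and the right $\Sigma_t$-action (which permutes through-lines). The shape-isotypic decomposition is therefore automatically a decomposition of $\Sigma_s$-$\Sigma_t$-bimodules.

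Next I would identify each summand with the claimed three-step functor. The generator $v\otimes\epsilon_l\otimes b$ described in the statement is a canonical representative of the shape $(\xi_\ast,\xi_\circ)$. Its left $\Sigma_s$-stabiliser is exactly the wreath product $\Sigma_\ast\wr\Sigma_{\xi_\ast}\times \Sigma_\circ\wr\Sigma_{\xi_\circ}$, embedded in $\Sigma_s$ as the subgroup that permutes nodes inside each individual arc and that permutes arcs of the same length and decoration among themselves. On the right, the $\Sigma_a$-factor acting within a single $\ast$-arc of length $a$ acts trivially on the generator, because permuting attachment points inside a single arc is absorbed by that arc, and the whole $\Sigma_\circ\wr\Sigma_{\xi_\circ}$-factor acts trivially because $\circ$-arcs carry no through-lines. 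Hence the right $\Sigma_t$-action on the generator factors through the canonical surjection of the wreath product onto the Young subgroup $\Sigma_{\xi_\ast}\le \Sigma_t$ (the $\xi_\ast^i$ arcs of length $i$ permute the corresponding $\xi_\ast^i$ through-lines). This identifies
\begin{equation*}
{^mV^l_{(\xi_\ast,\xi_\circ)}}\;\simeq\; K\Sigma_s\otimes_{K(\Sigma_\ast\wr\Sigma_{\xi_\ast}\times \Sigma_\circ\wr\Sigma_{\xi_\circ})} K\Sigma_t,
\end{equation*}
with the wreath product acting on $K\Sigma_t$ through its canonical quotient onto $\Sigma_{\xi_\ast}\le \Sigma_t$. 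Tensoring on the right with a $K\Sigma_t$-module $M$ is therefore restriction from $\Sigma_t$ to $\Sigma_{\xi_\ast}$, inflation to the wreath product, and induction up to $\Sigma_s$, as asserted. The localisation functor $e_{(\xi_\ast,\xi_\circ)}$ is then read off as the $\Hom$-adjoint of this bimodule.

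Finally, the subquotient identity is proved as in Proposition \ref{A filtration of induction for Brauer}: any diagram $u\otimes v\otimes b$ representing an element of $J_{l,i}/J_{l,i+1}$ factors as $\delta^{-(l+i)}(u\otimes\epsilon_{l+i}\otimes b)\cdot(\epsilon_{l+i}\otimes v\otimes 1_{K\Sigma_{t-i}})$, with first factor in $J_{l+i,0}/J_{l+i,1}$ and second factor in ${^{l+i}V^l}$, the factorisation being unique up to the stabiliser of $\epsilon_{l+i}$, which permutes the $t-i$ through-lines. Passing this factorisation through $\otimes_{K\Sigma_t}M$ realises the left-hand side as $J_{l+i,0}/J_{l+i,1}\otimes_{K\Sigma_{t-i}}(e_{l+i}M)$. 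The main obstacle lies in Step~2: one must verify cleanly that the within-arc $\Sigma_a$-factors and the $\Sigma_\circ\wr\Sigma_{\xi_\circ}$-factor act trivially on the right, while the arc-permuting factors $\Sigma_{\xi_\ast^i}$ survive to assemble the correct Young embedding $\Sigma_{\xi_\ast}\le\Sigma_t$. Once these identifications are pinned down, the rest of the argument is a direct transcription of the Brauer and walled Brauer proofs.
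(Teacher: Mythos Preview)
Your proposal is correct and follows essentially the same approach as the paper: identify the left $\Sigma_s$-stabiliser of the canonical generator as the wreath product $\Sigma_\ast\wr\Sigma_{\xi_\ast}\times\Sigma_\circ\wr\Sigma_{\xi_\circ}$, identify the right $\Sigma_t$-structure via the quotient onto $\Sigma_{\xi_\ast}$, and conclude the three-step description of the functor. The paper's own proof is extremely terse---it simply asserts the left and right module structures and notes that indecomposability of the bimodule summand rules out multiplicities---so your version in fact supplies considerably more detail, and you also handle the ``Furthermore'' isomorphism explicitly by transcribing the factorisation argument from the Brauer case, which the paper leaves implicit.

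One small wording issue in Step~2: saying ``the right $\Sigma_t$-action on the generator factors through the canonical surjection of the wreath product onto $\Sigma_{\xi_\ast}$'' conflates two different groups. What you mean---and what your displayed bimodule isomorphism makes clear---is that the \emph{left} action of the wreath product (sitting inside $\Sigma_s$) on the generator can be absorbed by a \emph{right} action of its quotient $\Sigma_{\xi_\ast}\le\Sigma_t$; this is precisely what makes the balanced tensor product over the wreath product well-defined with the stated right action. Rephrasing that sentence would remove the only ambiguity in an otherwise sound argument.
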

\begin{proof}
 As a left $\Sigma_s$-module, ${^mV^l_{(\xi_\ast,\xi_\circ)}}$ is isomorphic to (multiple copies of) $K\!\uparrow_{\Sigma_\ast\wr\Sigma_{\xi_\ast}\times\Sigma_\circ\wr\Sigma_{\xi_\circ}}^{\Sigma_s}$, this is because 
 the left action of $\Sigma_s$  cannot swap through-lines attached to arcs of different length.
 As a right $\Sigma_t$-module ${^mV^l_{(\xi_\ast,\xi_\circ)}}$  is isomorphic to (multiple copies of)  $K\!\uparrow_{\Sigma_{\xi_\ast}}^{\Sigma_t}$ (for a similar reason).  We have that ${^mV^l_{(\xi_\ast,\xi_\circ)}}$ is indecomposable as a bimodule (therefore we do not obtain any multiplicities in the statement of the Proposition) and so the result follows.
 %
%
%
\end{proof}

\subsection{Modified (bi)-diagrams and (bi)-tableaux}
 In the case of the partition algebra we shall always take $r=t+l=s+m$, and $\lambda \vdash t$, $\mu \vdash s$. We let $i = m-l \in \mathbb{Z}$.

 \subsubsection{Modified diagrams}
We let a \emph{configuration of arcs}, $\Xi$, be any disjoint union of the integers $\{1, \ldots, r\}$ into subsets $\chi_j$ (of cardinality $1\leq |\chi_j| \leq t$) each of which is annotated with an element of $\{ \ast, \circ\}$.   We refer to an annotated set $\chi_j$ as an \emph{arc}.

For $\lambda \vdash t$, take the Young diagram $[\lambda]$; we record this configuration on $[\lambda]$ as follows: draw a line between the nodes labelled by the elements of the set $\chi_j$, (this line will not be unique); and then replace the nodes with the element of $\{ \ast, \circ\}$ that is attached to $\chi_j$.  For example, let $t=8$, take the configuration $\{1,7\}^\ast, \{2,8\}^\ast, \{3,4\}^\circ, \{5\}^\ast, \{6\}^\ast$, and record this on the Young diagram of $\lambda = (5,3)$ as follows:
\begin{align*}
\sigma=  \begin{minipage}{44mm}
\def\objectstyle{\scriptstyle}
\xymatrix@=1pt{
 \ast	 \ar@{-}[ddrr] 	&& \ast \ar@{-}[drdr] 		 && \circ \ar@{-}[rr]	&& \circ	&& \ast 	\\
 &\\
\ast	 	&&\ast 		&& \ast	 	
 }
\end{minipage} ,
\end{align*}

A \emph{modified diagram} arises from a pair  $(\lambda, i)$ as follows: take the Young diagram $[\lambda]$ and endow it with any configuration of annotated arcs such that $t-i$ equals the number of arcs $\chi_j$ which are annotated with a $\ast$.
For example,
\begin{align*}
 \begin{minipage}{44mm}
\def\objectstyle{\scriptstyle}
\xymatrix@=1pt{   \ast  		&&	\ast\ar@{-}[rr]   	  &&	\ast	  	\\
 &&\\ 
\ast 	&&	\ast \ar@{-}[uu] }
\end{minipage}  
\  \ , \  \ 
\begin{minipage}{44mm}
\def\objectstyle{\scriptstyle} 
\xymatrix@=1pt{ 
   \ast  	 	 &&	\circ	  &&	\ast	 	\\ &&\\ 
\ast  
	 &&	\circ \ar@{-}[uu] 	 }
\end{minipage} 
\  \ , \  \
\begin{minipage}{34mm}
\def\objectstyle{\scriptstyle}
\xymatrix@=1pt{
   \ast  		 &&		\ast \ar@{-}[ll]  	 &&	\ast	\\ &&\\ 
\ast 
	 && \circ  }
\end{minipage} 
\text{ are $((3,2),2)$-diagrams.}
\end{align*}
Define the equivalence relation,  row-standard modified diagrams, and cosets as before.   

 \subsubsection{Modified bi-diagrams}
For $\mu \vdash r-m$, define a $(\mu,m)$ bi-diagram to be the bi-diagrams $([\mu],[m])$ with a configuration of arcs (as above) under the additional constraints that (a) no $\circ$-arc has a node in $[\mu]$ (b) any $\ast$-arc is connected to exactly one node of $[\mu]$.  For example, there are three  $((2,1),1)$ bi-diagrams:
\begin{align*}
\omega_a=\begin{minipage}{44mm}
\def\objectstyle{\scriptstyle}
\xymatrix@=1pt{
  \ast	 	&& \ast 	  & 	&&&  \circ  \\
 &\\
 \ast	 	 	 		
 }
\end{minipage} \ , \
\omega_b=\begin{minipage}{44mm}
\def\objectstyle{\scriptstyle}
\xymatrix@=1pt{
 &\ast	 	&& \ast 	  & 	&&&  \ast \ar@{-}@/^.43pc/[llll] 	  	\\
 &\\
&\ast	 	 	 		
 }
\end{minipage} \ , \
\omega_c=\begin{minipage}{44mm}
\def\objectstyle{\scriptstyle}
\xymatrix@=1pt{
& \ast	 	&& \ast 	  & 	&&&  \ast \ar@{-}@/^.43pc/[ldlldlll] 	 	\\
 &\\
&\ast	 	 	 		
 }
\end{minipage}. 
\end{align*}
We let $T^{m}_{\mu,\rm{bi}}$ denote the set of all $(\mu,m)$ bi-diagrams.

\subsubsection{Restricted (bi)-diagrams and (bi)-tableaux} Let  $\sigma \in T^i_\lambda$, $\omega \in T^i_{\mu, \text{bi}}$, and let $\Xi$ be the configuration of arcs.  Fix  an integer $n$ and an element $\{\ast,\circ\}$.  Let $\Xi_n^\ast$ (or respectively $\Xi_n^\circ$) denote the set of all arcs, $\chi_j$, such that $|\chi_j|=n$ and that $\chi_j$ is annotated with a $\ast$ (or respectively a $\circ$).  For $\sigma \in T^i_\lambda$ (respectively $\omega \in T^i_{\mu, \text{bi}}$)  we let $|\sigma|$ (respectively $|\omega|$)  be the bi-composition $( (|\Xi^\ast_1|, (|\Xi^\ast_2|,\ldots ),(|\Xi^\circ_1|, (|\Xi^\circ_2|,\ldots ))$.

A $(\lambda,i)$-tableau (respectively $(\omega,i)$ bi-tableau) arises from a row standard $(\lambda,i)$-diagram (respectively $(\omega,i)$ bi-diagram) by replacing the annotations $\ast$ on elements of $\Xi_n^\ast$ with the labels $\{1 , \ldots, |\Xi^\ast_n| \}$.  In the above example, we have that 
$\{1,7\}^{{1} }$, $\{2,8\}^{{2} }$, $\{3,4\}^\circ,$ $\{5\}^{{2} },$ $\{6\}^{{1} }$,
  is a $((5,3),4)$-tableau.  We record this on the diagram as follows:
\begin{align*}
\sigma=  \begin{minipage}{44mm}
\def\objectstyle{\scriptstyle}
\xymatrix@=4pt{
 \ast	 \ar@{-}[ddrr] |\hole|{\textbf{1}}|\hole		&& \ast \ar@{-}[drdr]|\hole|{\textbf{2}}|\hole 		 && \circ \ar@{-}[rr]	&& \circ	&& {\textbf{2}}	\\
 &\\
{\textbf{1}} &&\ast 		&& \ast	 	
 }
\end{minipage} ,
 \end{align*}

For a given row standard $(\lambda,i)$ diagram, $\sigma\in T^i_\lambda$, (respectively bi-diagram $\omega \in T^{i}_{\mu,\rm{bi}}$) we shall now define the corresponding restricted, (respectively induced) diagram $\lambda\!\!\downarrow_\sigma$ (respectively bi-diagram $\mu\!\uparrow^\omega$).   
 For $n \geq1$, two arcs in $\Xi_n^\ast$ are conjugate if there exists an element of $\Sigma_\lambda$ (respectively $\Sigma_\mu \times \Sigma_m$) which swaps these two arcs.
  Take the cardinalities of a full set of conjugacy classes under this action, and arrange them in decreasing size, this gives us a partition which we denote $\lambda_\sigma^n$ (respectively  $\mu^\omega_{n,\ast}$).  Repeat with $\Xi^\circ_n$ to define $\mu^\omega_{n,\circ}$ similarly (note we do not do this for $\lambda$).
  
   We define the 
\emph{restricted diagram} $\lambda\!\!\downarrow_\sigma$ to be the multi-partition $(\lambda_\sigma^1, \lambda_\sigma^2, \ldots )$.
We let $\mu^\omega_{\ast}= (\mu^\omega_{1,\ast},\mu^\omega_{2,\ast},\ldots) $ and $\mu^\omega_{\circ}= (\mu^\omega_{1,\circ},\mu^\omega_{2,\circ},\ldots) $ and define the \emph{induced diagram} to be $\mu\!\uparrow^\omega=(\mu^\omega_{\ast};\mu^\omega_{\circ})$.
We let $\Sigma_{\lambda\downarrow_\sigma}$ denote the Young subgroup corresponding to the multi-partition $\lambda\!\!\downarrow_\sigma$.
We let  $\Sigma_\ast \wr\Sigma_{\mu^\omega_{\ast}}$ denote the group $\Sigma_1 \wr \Sigma_{\mu^\omega_{1,\ast}}\times \Sigma_2 \wr\Sigma_{\mu^\omega_{2,\ast}} \times \ldots $
and let $\Sigma_{\omega_\ast}$ denote the subgroup $\Sigma_{\mu^\omega_{1,\ast}}\times \Sigma_{\mu^\omega_{2,\ast}}\times \ldots $; we extend this notation to $\omega_\circ$ and let $\Sigma_{\mu\uparrow^\omega}$ denote the product $\Sigma_\ast \wr \Sigma_{\mu^\omega_\ast} \times  \Sigma_\circ\wr \Sigma_{\mu^\omega_\circ}$.  We let $M(\mu\!\uparrow^\omega)$ denote the permutation module on the cosets of  $\Sigma_{\mu\uparrow^\omega}$.

\begin{eg}
In the above examples $\lambda\!\!\downarrow_\sigma = ((1^2), (2))$,  $\mu\!\!\uparrow^{\omega_a} = (( (2,1) );(1))$,  $\mu\!\!\uparrow^{\omega_b} = (((1^2),(1));\o)$ and  $\mu\!\!\uparrow^{\omega_c} = (((2, 1));\o)$.   Therefore $\Sigma_{\lambda\downarrow_\sigma} =\Sigma_{2,1,1} = \Sigma_{\mu\uparrow^{\omega_a} }$, $\Sigma_{\mu\uparrow^{\omega_b} }=\Sigma_{1,1} \times \Sigma_2$, and $\Sigma_{\mu\uparrow^{\omega_c} }=\Sigma_{2} \times \Sigma_2$ (here we have used the fact that $\Sigma_2 \wr \Sigma_1 \simeq \Sigma_2$).
 \end{eg}
By  Proposition \ref{fannyflaps} 
we have that
 $	{^{m+i}V_{(\xi_\ast,\xi_\circ)}
^m}\otimes_{K\Sigma_{s}}M(\mu) \simeq \bigoplus_{|\omega| = (\xi_\ast,\xi_\circ)  }M(\mu\!\uparrow^\omega),$ 
summing over all possible arc configurations, we have that
$$	{^{m+i}V 
^m}\otimes_{K\Sigma_{s}}M(\mu) \simeq \oplus_{\omega\in T^{\rm{bi}}_{\mu,m}  }M(\mu\!\uparrow^\omega).$$  
 The dominance order, construction of semistandard tableaux etc. are all defined analogously to the Brauer algebra case.   
 
 \begin{lem}\label{propflap}

A set of double coset representatives for $\Sigma_{\lambda} \backslash \Sigma_{t} /\Sigma_{\mu\uparrow^\omega}$ is given by $\mathcal{D}_{\lambda\downarrow, \mu}^i = \{ \sigma \otimes \epsilon_{l+i} \otimes d_{\lambda\downarrow_\sigma, \mu} : \sigma \in T^i_\lambda \text{ such that }|\sigma|= |\omega| , d_{\lambda\downarrow_\sigma, \mu} \in \mathcal{D}_{\lambda\downarrow_\sigma, \mu}\}$.

A set of coset representatives for $\Sigma_{\lambda} /  \Sigma_{\lambda}\cap \Sigma_{\mu\uparrow^\omega}$ is given by $\mathcal{D}^i_\mu=\{\sigma \otimes \epsilon_{l+i} \otimes d_{\mu} : \sigma \in T^i_{1^t}   \text{ such that }|\sigma|= |\omega|, d_{\mu} \in \mathcal{D}_\mu\cap \Sigma_{\lambda \downarrow_\sigma} \}.$
\end{lem}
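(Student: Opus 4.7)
The plan is to adapt the proof of Lemma \ref{ASETOFBRAUERCOSETS} (and its walled-Brauer analogue in Section \ref{flibjib}), replacing the Brauer stabiliser $\Sigma_{t-2i}\times\Sigma_2\wr\Sigma_i$ by the partition-algebra stabiliser $\Sigma_{\mu\uparrow^\omega}$ produced by Proposition \ref{fannyflaps}. The extra work is combinatorial bookkeeping for arcs of arbitrary length and of the two types $\ast$ and $\circ$; the orbit-counting skeleton of the argument is identical.

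First I would fix the arc-type datum $|\omega|$ and arrange $\{\sigma\in T^i_{1^t}:|\sigma|=|\omega|\}$ horizontally as diagrams on $t$ nodes. The group $\Sigma_t$ acts transitively on this set by permutation of nodes, and by Proposition \ref{fannyflaps} the stabiliser in $\Sigma_t$ of the canonical reference diagram defining ${^mV^l_{|\omega|}}$ is precisely $\Sigma_\ast\wr\Sigma_{\mu^\omega_\ast}\times\Sigma_\circ\wr\Sigma_{\mu^\omega_\circ}=\Sigma_{\mu\uparrow^\omega}$. In this identification each $\Sigma_n$-factor permutes nodes within a length-$n$ arc, while the Young factors inside $\Sigma_{\mu^\omega_{n,\ast}}$ and $\Sigma_{\mu^\omega_{n,\circ}}$ swap the arcs lying in the same $\Sigma_\mu\times\Sigma_m$-orbit of $\omega$. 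Consequently $\{\sigma\in T^i_{1^t}:|\sigma|=|\omega|\}$, together with the data of how the bottom ends of the $\ast$-arcs are distributed across $\mu$-rows, parameterises the coset space $\Sigma_t/\Sigma_{\mu\uparrow^\omega}$.

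To deduce the double coset statement I would take $\Sigma_\lambda$-orbits: rearrange the $t$ nodes into the Young shape $[\lambda]$, so that $\Sigma_\lambda$ acts by row-wise permutation, and observe by the definition of $T^i_\lambda$ that the $\Sigma_\lambda$-orbits on $\{\sigma\in T^i_{1^t}:|\sigma|=|\omega|\}$ are in bijection with $\{\sigma\in T^i_\lambda:|\sigma|=|\omega|\}$. For each such $\sigma$ the residual freedom is exactly the matching of the $s$ bottom ends of the $\ast$-arcs to the $\mu$-row partition; since the stabiliser of $\sigma$ in $\Sigma_\lambda$ restricts on these $s$ ends to the Young subgroup $\Sigma_{\lambda\downarrow_\sigma}$ of $\Sigma_s$, this residual freedom is recorded by $\Sigma_{\lambda\downarrow_\sigma}\backslash\Sigma_s/\Sigma_\mu=\mathcal{D}_{\lambda\downarrow_\sigma,\mu}$, which assembles with $\sigma$ into the set $\mathcal{D}_{\lambda\downarrow,\mu}^i$. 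The second claim is obtained by running the same argument inside $\Sigma_\lambda$ in place of $\Sigma_t$ and then taking a $\mathcal{D}_\mu\cap\Sigma_{\lambda\downarrow_\sigma}$-transversal to record the matching.

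The main technical obstacle I anticipate is checking that the subgroup $\Sigma_{\mu\uparrow^\omega}$, defined combinatorially via the orbit partitions $\mu^\omega_{n,\ast}$ and $\mu^\omega_{n,\circ}$ of $\omega$ under $\Sigma_\mu\times\Sigma_m$, really is the stabiliser inside $\Sigma_t$ of the canonical reference diagram implicit in Proposition \ref{fannyflaps}. Once this identification is nailed down, both statements reduce to transparent coset and orbit counting, and no new ideas beyond those of Lemma \ref{ASETOFBRAUERCOSETS} are required.
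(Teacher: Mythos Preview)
Your proposal is correct and follows essentially the same route as the paper's own proof. The paper's argument is terser than yours: it simply observes that, unlike the Brauer case, $\Sigma_t$ does not act transitively on all of $T^i_{1^t}$ but only on the subset with fixed arc-type $(\xi_\ast,\xi_\circ)=|\omega|$; it then notes that the $\Sigma_\lambda$-orbits on $\Sigma_t/(\Sigma_\ast\wr\Sigma_{\xi_\ast}\times\Sigma_\circ\wr\Sigma_{\xi_\circ})$ are by construction parameterised by $\{\sigma\in T^i_\lambda:|\sigma|=(\xi_\ast,\xi_\circ)\}$, and defers to the Brauer argument for the rest. Your version spells out the fibre $\mathcal{D}_{\lambda\downarrow_\sigma,\mu}$ and the stabiliser identification more explicitly, but the orbit-counting skeleton and the reduction to Lemma~\ref{ASETOFBRAUERCOSETS} are the same.
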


\begin{proof}

As in the previous sections, we have constructed the $(\lambda,i)$-diagrams as $\Sigma_\lambda$-orbits.  Note that in this section $\Sigma_t$ does not act transitively on the set of diagrams, $T^i_{1^t}$, but rather $\Sigma_t$ acts transitively on $\{\sigma\in T^i_{1^t}: |\sigma|=(\xi_\ast,\xi_\circ)\}$ for a fixed $(\xi_\ast,\xi_\circ)$.

For a fixed $(\xi_\ast,\xi_\circ)$, the $\Sigma_\lambda$ orbits on $\Sigma_t/\Sigma_\ast \wr \Sigma_{\xi_\ast}\times \Sigma_\circ  \wr \Sigma_{\xi_\circ}$ are parameterised by the diagrams $\{\sigma\in T^i_\lambda: |\sigma|=(\xi_\ast,\xi_\circ)\}$ by construction.   One can now proceed as before.
 \end{proof}

\subsection{Combinatorics of permutation modules}
We wish to consider the $\Sigma_s-\Sigma_t$-bimodule structure of ${^mV^l_{(\xi_\ast,\xi_\circ)}}\otimes_{K\Sigma_{r-l}}M(\lambda)$ and $e_{(\xi_\ast,\xi_\circ)}M(\lambda)$.  
\begin{prop}\label{tripdrip}
The permutation modules, $M(\lambda,l)$, for the partition algebra have a filtration with subquotients of the form $\oplus_{\sigma \in T^i_\lambda}\G_{l+i} (M(\lambda\!\downarrow_\sigma))$.  
 \end{prop}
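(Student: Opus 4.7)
The plan is to apply the filtration of the induction functor from Section \ref{wobbles} to $M(\lambda,l) = Ae_l \otimes_{K\Sigma_{t}} M(\lambda)$. This immediately produces a filtration of $M(\lambda,l)$ whose $i$-th subquotient is $(J_{l,i}/J_{l,i+1})\otimes_{K\Sigma_t} M(\lambda)$, so it suffices to identify each such subquotient with $\oplus_{\sigma \in T^i_\lambda}\G_{l+i}(M(\lambda\!\downarrow_\sigma))$.

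First, I would invoke the bimodule isomorphism at the end of Proposition \ref{fannyflaps} to rewrite this subquotient as $(J_{l+i,0}/J_{l+i,1})\otimes_{K\Sigma_{t-i}} e_{l+i}M(\lambda) = \G_{l+i}(e_{l+i}M(\lambda))$. Since $\G_{l+i}$ is exact and additive, the proposition reduces to establishing the $K\Sigma_{t-i}$-module isomorphism
$$ e_{l+i}M(\lambda) \;\simeq\; \bigoplus_{\sigma \in T^i_\lambda} M(\lambda\!\downarrow_\sigma). $$

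For this reduction I would split $^{l+i}V^l$ into its arc-type summands $\oplus_{(\xi_\ast,\xi_\circ)}{}^{l+i}V^l_{(\xi_\ast,\xi_\circ)}$ from Proposition \ref{fannyflaps}, and analyse each piece $^{l+i}V^l_{(\xi_\ast,\xi_\circ)}\otimes_{K\Sigma_t} M(\lambda)$ as the restriction-projection-induction composite described there. The double coset count of Lemma \ref{propflap} identifies the $\Sigma_\lambda$-orbits on the set of diagrams of type $(\xi_\ast,\xi_\circ)$ with $\{\sigma \in T^i_\lambda : |\sigma|=(\xi_\ast,\xi_\circ)\}$; applying Mackey decomposition on each orbit shows that the $\sigma$-contribution is precisely a permutation module on the cosets of the Young subgroup $\Sigma_\lambda \cap \Sigma_{\mu\uparrow^\sigma} = \Sigma_{\lambda\downarrow_\sigma}$ of $\Sigma_{t-i}$, i.e. $M(\lambda\!\downarrow_\sigma)$. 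Summing over $\sigma$ and over all $(\xi_\ast,\xi_\circ)$ assembles to the required decomposition. The explicit isomorphism is the partition-algebra analogue of the map $\pi$ in Proposition \ref{subqBrauer}, sending $v\otimes \epsilon_{l+i}\otimes x_{\lambda\downarrow_\sigma}$ to $(v\otimes \sigma \otimes \id)\otimes_{K\Sigma_t} x_\lambda$, where now $\sigma$ carries both $\ast$- and $\circ$-arc data.

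The main obstacle will be the bookkeeping forced by the richer arc combinatorics: arcs can have arbitrary length and come in two flavours, so a $\ast$-arc of length $n$ appearing with multiplicity $\xi^n_\ast$ contributes a wreath-product symmetry $\Sigma_n\wr\Sigma_{\xi^n_\ast}$ that must be passed correctly through the Mackey argument. One must verify that the intersection of $\Sigma_\lambda$ with the full symmetry group $\Sigma_\ast\wr\Sigma_{\xi_\ast}\times\Sigma_\circ\wr\Sigma_{\xi_\circ}$ at each orbit representative is a genuine Young subgroup of the multi-partition shape $\lambda\!\downarrow_\sigma$ defined in the previous subsection, rather than a twisted variant, so that each orbit really does yield an honest permutation module $M(\lambda\!\downarrow_\sigma)$. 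Once this case analysis is completed, the identification is direct.
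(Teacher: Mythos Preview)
Your proposal is correct and follows essentially the same route as the paper: reduce via Proposition \ref{fannyflaps} to showing $e_{l+i}M(\lambda)\simeq\oplus_{\sigma\in T^i_\lambda}M(\lambda\!\downarrow_\sigma)$, split into pieces indexed by arc types $(\xi_\ast,\xi_\circ)$, and identify each piece as a direct sum of permutation modules indexed by the $\sigma$ with $|\sigma|=(\xi_\ast,\xi_\circ)$. The only cosmetic difference is that you phrase the identification via Mackey decomposition and the double cosets of Lemma \ref{propflap}, whereas the paper simply refers back to the orbit argument in the proof of Proposition \ref{flibbydejibjob}; these are the same computation.
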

\begin{proof}
By Proposition \ref{fannyflaps}, all that remains is to show that 
$$e_{l+i}M(\lambda) \simeq \oplus_{(\xi_\ast,\xi_\circ)}e_{(\xi_\ast,\xi_\circ)}M(\lambda)  \simeq \oplus_{\sigma\in T^i_\lambda} M(\lambda\!\downarrow_\sigma).$$   For each $(\xi_\ast,\xi_\circ)$ one may proceed as in the proof of Proposition \ref{flibbydejibjob} in order to show that $e_{(\xi_\ast,\xi_\circ)}M(\lambda) \simeq M(\lambda)/\langle m-hm:  h \in (\Sigma_\ast \wr \Sigma_{\xi_\ast} \times\Sigma_\circ \wr \Sigma_{\xi_\circ})\rangle$ is isomorphic to the sum $\oplus_{\sigma\in T^i_\lambda} M(\lambda\!\!\downarrow_\sigma)$ such that $|\sigma|  ={(\xi_\ast,\xi_\circ)}$.   The result now follows by summing over the  $e_{(\xi_\ast,\xi_\circ)}$.
 \end{proof}

We leave the proofs of the following corollaries as an exercise for the reader.

\begin{cor}[A Murphy basis]\label{partitionmurphy}
The permutation module $M(\lambda,l)$ has a basis in terms of modified tableaux as follows:
\begin{align*}
\{ v \otimes [\sigma] \otimes m_{ \mathfrak{s}\T^\sigma} :v \in {V_{l+i}}, \T^\sigma \in T^i_0(\omega, \lambda), \mathfrak{s} \in \Std(\omega) \text{ for some } \omega \vdash t\}.
\end{align*}
\end{cor}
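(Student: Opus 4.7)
My plan is to mimic almost verbatim the strategy used for the Brauer and walled Brauer cases, with Proposition \ref{tripdrip} playing the role that Proposition \ref{subqBrauer} (respectively Proposition \ref{flibbydejibjob}) plays there. The cardinality of the proposed basis equals $\dim M(\lambda,l)$, so it suffices to show that it spans, or equivalently that each graded piece of the filtration from Proposition \ref{tripdrip} is spanned by the pieces of the proposed basis indexed by a fixed $\sigma \in T^i_\lambda$ with a fixed $i$.

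First I would invoke Proposition \ref{tripdrip} to reduce to showing that, for each $i\ge0$ and each fixed $\sigma\in T^i_\lambda$, the summand $\G_{l+i}(M(\lambda\!\!\downarrow_\sigma))$ has basis
\begin{align*}
\{ v\otimes[\sigma]\otimes m_{\mathfrak{s}\T^\sigma} : v\in V_{l+i},\ \T^\sigma\in T_0^i(\omega,\lambda),\ \mathfrak{s}\in\Std(\omega),\ \omega\vdash t\},
\end{align*}
where the constraint $\T^\sigma\in T_0^i(\omega,\lambda)$ with the given $\sigma$ amounts (by the definition of $T_0^i(\omega,\lambda)$ in the Brauer section) to $\T$ being a semistandard $\omega$-tableau of type $\lambda\!\!\downarrow_\sigma$.

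Next I would unfold $\G_{l+i}(M(\lambda\!\!\downarrow_\sigma)) = V_{l+i}\otimes M(\lambda\!\!\downarrow_\sigma)$ (using Proposition \ref{HHKP1}), and identify the image of a generic basis element of this space inside $J_{l,i}/J_{l,i+1}\otimes_{K\Sigma_t}M(\lambda)$ via the embedding
\begin{align*}
v\otimes\epsilon_{l+i}\otimes x_{\lambda\downarrow_\sigma}\;\longmapsto\;(v\otimes\sigma\otimes\mathrm{id})\otimes_{K\Sigma_t}x_\lambda,
\end{align*}
exactly as in the Brauer case (Proposition \ref{subqBrauer}, which here is replaced by Proposition \ref{tripdrip}). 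With $v\in V_{l+i}$ fixed, the remaining freedom in the subquotient lies in choosing a basis of $M(\lambda\!\!\downarrow_\sigma)$ as a $K\Sigma_{t-?}$-module (where the index is dictated by the sum of row-lengths of $\lambda\!\!\downarrow_\sigma$), and for this I would quote Theorem \ref{Smurphet}: the classical Murphy basis of the Young permutation module $M(\lambda\!\!\downarrow_\sigma)$ is indexed by pairs $(\mathfrak{s},\T)$ with $\mathfrak{s}\in\Std(\omega)$ and $\T\in T_0(\omega,\lambda\!\!\downarrow_\sigma)$ for some $\omega$. Pairing $\T$ with $\sigma$ yields the semistandard modified tableau $\T^\sigma\in T_0^i(\omega,\lambda)$, and the corresponding element of the basis is precisely $v\otimes[\sigma]\otimes m_{\mathfrak{s}\T^\sigma}$.

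The only step that needs any real care, as opposed to being a direct transcription of the Brauer argument, is verifying that the classical Murphy basis for $M(\lambda\!\!\downarrow_\sigma)$ really does transport correctly under the isomorphism of Proposition \ref{tripdrip} without picking up unwanted extra terms from diagrams containing $\circ$-arcs of cardinality $>1$ or $\ast$-arcs of non-uniform lengths; this is where the extra combinatorics of the partition algebra (as opposed to the Brauer algebra, where all arcs have length $2$) could in principle cause trouble. However Proposition \ref{fannyflaps} and the definition of $\lambda\!\!\downarrow_\sigma$ are engineered precisely so that the stabiliser of $\sigma$ in $\Sigma_t$ acts on the through-lines as the Young subgroup $\Sigma_{\lambda\downarrow_\sigma}$, and the double-coset identification of Lemma \ref{propflap} shows that summing over $\mathcal{D}_{\lambda\downarrow_\sigma}$ cosets is the correct bookkeeping. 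Thus the same spanning and linear-independence argument as in Section \ref{Smurphy} carries over, completing the proof.
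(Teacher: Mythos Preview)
Your proposal is correct and follows exactly the route the paper intends: the paper explicitly leaves this corollary as an exercise, and the argument you give is the direct transcription of the Brauer-algebra proof in Section~\ref{Smurphy}, with Proposition~\ref{tripdrip} substituted for Proposition~\ref{subqBrauer} and the extra bookkeeping for $\ast$- and $\circ$-arcs handled by Proposition~\ref{fannyflaps} and Lemma~\ref{propflap}. There is nothing to add.
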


\begin{cor}[Specht filtration]
The $P_K({r},\delta)$-module $M(\lambda,l)$ has a filtration $M(\lambda,l)=M_1 \supseteq M_2 \supseteq \ldots \supseteq M_{k+1} =0$ such that there exists $\mu^j\vdash t-i$ with $M_j/M_{j+1} \simeq \Delta(\mu^j,i)$. Moreover, for each partition $\mu$ the number of $\mu^j$ equal to $\mu$ is the number of semistandard $\mu$-tableaux of type $(\lambda,i)$.
\end{cor}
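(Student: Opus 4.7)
The plan is to imitate the proof of the analogous corollary for the Brauer and walled Brauer algebras by combining the outer filtration of Proposition \ref{tripdrip} with the inner Specht filtration of a symmetric-group permutation module, and then transporting the latter through the exact globalisation functor.

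More precisely, I would proceed as follows. First, apply Proposition \ref{tripdrip} to obtain a filtration $M(\lambda,l) = N_0 \supseteq N_1 \supseteq \cdots \supseteq N_{n+1} = 0$ whose $i$-th subquotient $N_i/N_{i+1}$ is isomorphic to $\bigoplus_{\sigma \in T^i_\lambda} \G_{l+i}(M(\lambda\!\downarrow_\sigma))$. Next, for each fixed $\sigma \in T^i_\lambda$, the module $M(\lambda\!\downarrow_\sigma)$ is (a Young permutation module for) the symmetric group on $t-i$ letters, so the classical Corollary following the Murphy basis theorem gives a filtration of $M(\lambda\!\downarrow_\sigma)$ by Specht modules $\mathcal{S}(\mu)$, with the multiplicity of $\mathcal{S}(\mu)$ equal to $|T_0(\mu,\lambda\!\downarrow_\sigma)|$, the number of semistandard $\mu$-tableaux of type $\lambda\!\downarrow_\sigma$.

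I would then apply the globalisation functor $\G_{l+i}$, which is exact by Proposition \ref{HHKP1}, so it transports the Specht filtration of $M(\lambda\!\downarrow_\sigma)$ to a filtration by $\G_{l+i}(\mathcal{S}(\mu)) = \Delta(\mu,l+i)$; here I use the fact (built into the cellularly stratified structure of $P_K(r,\delta)$) that standard modules of the partition algebra arise as globalisations of Specht modules of the input symmetric group. Concatenating this refinement inside the outer filtration from the first step yields a single filtration of $M(\lambda,l)$ whose subquotients are all standard modules $\Delta(\mu,l+i)$. The multiplicity of $\Delta(\mu,l+i)$ in this refined filtration is
\[
\sum_{\sigma \in T^i_\lambda} |T_0(\mu,\lambda\!\downarrow_\sigma)|,
\]
which by the very definition of $T_0^i(\mu,\lambda)$ in the partition-algebra setting is exactly the number of semistandard $\mu$-tableaux of type $(\lambda,i)$. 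This gives the multiplicity statement.

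The main obstacle is essentially bookkeeping: verifying that globalisation really sends $\mathcal{S}(\mu)$ to the standard module $\Delta(\mu,l+i)$ of the partition algebra, and then checking that the multi-partition structure of $\lambda\!\downarrow_\sigma$ (coming from the wreath-product stabilisers in Proposition \ref{fannyflaps}) is compatible with the plain Murphy filtration of a symmetric-group permutation module. Both points were implicitly handled in the Brauer and walled-Brauer cases earlier in the paper, and the only new subtlety is the presence of $\circ$-arcs of arbitrary length; these are absorbed into the definition of $\lambda\!\downarrow_\sigma$ and contribute only to the combinatorial count, not to the Specht-module factors themselves, so the argument carries over verbatim.
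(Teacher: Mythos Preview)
Your proposal is correct and is precisely the argument the paper intends: the paper leaves this corollary as an exercise, but the method you describe---applying Proposition \ref{tripdrip}, refining each summand via the classical Murphy/Specht filtration of $M(\lambda\!\downarrow_\sigma)$, and pushing this through the exact globalisation functor $\G_{l+i}$ so that $\mathcal{S}(\mu)$ becomes $\Delta(\mu,l+i)=V_{l+i}\otimes\mathcal{S}(\mu)$---is exactly the route taken in the Brauer and walled Brauer cases, and nothing new is needed here. Your closing remark about the multi-partition shape of $\lambda\!\downarrow_\sigma$ is the only partition-algebra-specific point, and you handle it correctly: $M(\lambda\!\downarrow_\sigma)$ is still a Young permutation module for $\Sigma_{t-i}$, so the symmetric-group Specht filtration applies unchanged.
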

  \subsection{Homomorphisms between permutation modules}\label{partitionhom}
We have decomposed the $M(\mu,m)$ as a sum of transitive permutation modules for the symmetric group, and have constructed the cosets and double cosets required in order to prove the following.
 
  \begin{prop}\label{partitionhomthm}
Let $\lambda \vdash t$, $\mu \vdash s$ and $K$ be a field.  Then $\{\varphi_{\S^\sigma \T^\tau} : \S^\sigma \in T_0^\ast(\omega,\lambda), \T^\tau \in T_0^\ast(\omega,\mu)\}$ is a basis of $\Hom_{P_r}(M(\lambda,l), M(\mu,m))$, where $\varphi_{\S^\sigma\T^\tau}$ is given by
\begin{align*}
\varphi_{\S^\sigma \T^\tau} (\epsilon_l\otimes\epsilon_l\otimes x_\lambda) &= [\sigma]	\otimes[\tau]	\otimes m_{{\S  \T}},  
\end{align*}
\end{prop}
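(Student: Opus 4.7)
The plan is to follow the template of Propositions \ref{piranese} and \ref{porg}, adapted to the richer globalisation/localisation structure of the partition algebra supplied by Proposition \ref{fannyflaps}. First I would apply the adjunction of Section \ref{ind} to rewrite
\begin{align*}
\Hom_{P_r}(M(\lambda,l), M(\mu,m)) \simeq \Hom_{\Sigma_t}\!\Bigl(M(\lambda),\; e_l\bigoplus_i (J_{m,i}/J_{m,i+1}) \otimes_{K\Sigma_{r-m}} M(\mu)\Bigr).
\end{align*}
By Proposition \ref{tripdrip} (and Lemma \ref{restrict} to discard the top layers when $l>m$), the right-hand bimodule decomposes across the layers as a direct sum, over modified bi-diagrams $\tau \in T^j_{\mu,\mathrm{bi}}$, of globalised permutation modules $\G_{m+j}(M(\mu\!\uparrow^\tau))$. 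Applying Proposition \ref{fannyflaps} rewrites each such factor as the induced module $K_{\Sigma_{\mu\uparrow^\tau}}\!\!\uparrow^{\Sigma_t}$.

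Next, fixing a summand indexed by $\tau$, Frobenius reciprocity gives
\begin{align*}
\Hom_{\Sigma_t}\!\bigl(K_{\Sigma_\lambda}\!\!\uparrow^{\Sigma_t},\, K_{\Sigma_{\mu\uparrow^\tau}}\!\!\uparrow^{\Sigma_t}\bigr) \simeq \Hom_{\Sigma_\lambda}\!\bigl(K,\, K_{\Sigma_{\mu\uparrow^\tau}}\!\!\uparrow^{\Sigma_t}\!\!\downarrow_{\Sigma_\lambda}\bigr).
\end{align*}
Mackey decomposition then runs over a set of $\Sigma_\lambda$--$\Sigma_{\mu\uparrow^\tau}$ double coset representatives, which by Lemma \ref{propflap} consists of the pairs $(\sigma,d)$ with $\sigma \in T^i_\lambda$ satisfying $|\sigma|=|\tau|$ and $d \in \mathcal{D}_{\lambda\downarrow_\sigma,\mu}$. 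Summing over the $\Sigma_\lambda$-orbit of each $\sigma$ produces a homomorphism sending $x_\lambda$ to $[\sigma] \otimes [\tau]$ tensored with a through-line contribution determined by $d$, exactly as in the proofs of Propositions \ref{piranese} and \ref{porg}.

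At this stage the parameter $d$ ranges over $\mathcal{D}_{\lambda\downarrow_\sigma,\mu\downarrow_\tau}$; invoking Theorem \ref{cellbasisofschur} then replaces this Green--Dipper--James parametrisation of $\Hom_{\Sigma_{t-i}}(M(\lambda\!\downarrow_\sigma), M(\mu\!\downarrow_\tau))$ by the Dipper--James--Mathas cellular basis $\{\varphi_{\S\T}\}$ indexed by semistandard pairs $\S \in T_0(\omega, \lambda\!\downarrow_\sigma)$ and $\T \in T_0(\omega, \mu\!\downarrow_\tau)$. Combining these with the bi-diagram data $(\sigma,\tau)$ yields precisely the pairs $\S^\sigma \in T_0^\ast(\omega,\lambda)$, $\T^\tau \in T_0^\ast(\omega,\mu)$ of the statement, and tracing the identifications backwards through Propositions \ref{fannyflaps} and \ref{tripdrip} gives the stated formula $\varphi_{\S^\sigma \T^\tau}(\epsilon_l\otimes\epsilon_l\otimes x_\lambda) = [\sigma]\otimes[\tau]\otimes m_{\S\T}$.

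The main obstacle is bookkeeping. Unlike the Brauer and walled Brauer cases, where the relevant arc-stabilisers are the uniform groups $\Sigma_2\wr\Sigma_i$ (or their diagonal walled analogue), here the stabilisers take the more delicate wreath form $\Sigma_\ast\wr\Sigma_{\mu^\omega_\ast}\times\Sigma_\circ\wr\Sigma_{\mu^\omega_\circ}$, and $\Sigma_t$ no longer acts transitively on $T^i_{1^t}$ but only on the fibres of the $|\cdot|$-invariant. One must therefore (i) use the decomposition of Proposition \ref{fannyflaps} together with the indecomposability of each ${^mV^l_{(\xi_\ast,\xi_\circ)}}$ as a bimodule to verify that no spurious multiplicities arise when passing between the $J_{l,i}/J_{l,i+1}\otimes_{K\Sigma_t}(-)$ form and the induced-module description, and (ii) check that the matching condition $|\sigma|=|\tau|$ of Lemma \ref{propflap} correctly pairs arcs of equal length on either side, so that the bi-tableau data $(\sigma,\tau)$ assembles into a basis without over- or under-counting.
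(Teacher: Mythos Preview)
Your proposal is correct and follows essentially the same route as the paper's proof: adjunction from Section~\ref{ind}, decomposition of the layers via Propositions~\ref{fannyflaps} and~\ref{tripdrip}, Frobenius reciprocity and Mackey decomposition using the double cosets of Lemma~\ref{propflap}, and finally substitution of the cellular basis from Theorem~\ref{cellbasisofschur}. The only minor divergence is that the paper keeps the two-stage decomposition $\oplus_{\tau \in T_\mu^i}\oplus_{\omega \in T^{\mathrm{bi}}_\mu} M((\mu\!\downarrow_\tau)\!\uparrow^\omega)$---with the regular modified diagram $\tau$ coming from localisation and the bi-diagram $\omega$ from globalisation---explicitly separate, whereas you have somewhat conflated these into a single sum over ``bi-diagrams $\tau$''; keeping them distinct makes the matching condition $|\sigma|=|\omega|$ of Lemma~\ref{propflap} cleaner and clarifies why the bi-diagram data ultimately disappears from the final indexing set.
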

\begin{proof}
As in the Brauer case, we use Frobenius reciprocity to reduce to the question of determining symmetric group group homomorphisms between the various layers. In particular, by Proposition \ref{tripdrip} we must determine a basis for 
\begin{align*} 
\Hom_{K\Sigma_t}(M(\lambda), {^lV^{m+i}}\otimes_{K\Sigma_{r-m-i}}e_iM(\mu))  &\simeq 
 \oplus_{ \tau} \Hom_{K\Sigma_t}(M(\lambda),  {^lV^{m+i}}\otimes_{K\Sigma_{r-m-i}} M(\mu\!\downarrow_\tau) )\\
 &\simeq \oplus_{\tau \in T_\mu^i} \oplus_{ \omega \in T^{\rm{bi}}_\mu}  \Hom_{K\Sigma_t}(M(\lambda),   M((\mu\!\downarrow_\tau)\!\uparrow^\omega) )
\end{align*}
note that these homomorphisms are between two transitive permutation modules and so we can construct and index them via the (double) cosets given in Proposition \ref{propflap}, just as in the case of the proof of Proposition \ref{piranese}.  One can then substitute the Green--Dipper--James basis for the cellular basis, as in Proposition \ref{porg}.
 \end{proof}


\section{Semistandard bases of quasi-hereditary covers of diagram algebras}\label{homotheorem} 
We are now ready to show that the Schur algebras, $S(A)$, are cellular.  We shall show that these algebras are integral quasi-hereditary covers of  the Brauer, walled Brauer, and partition algebras and that they are 1-faithful in characteristic $p\neq 2,3$.

\begin{murthm}
Let $A$ be the classical Brauer, walled Brauer, or partition algebra.    Then the 
algebra $S(A)$ has a basis:
\begin{align*}
\Phi = \{ \varphi_{\S^\sigma\T^\tau} : \omega \vdash r-2n, \S^\sigma,\T^\tau \in {T}^\ast_0(\omega)			\},
\end{align*}
where $\S^\sigma, \T^\tau$ are $\omega$-tableaux of type $(\lambda,i)$ and $(\mu,j)$-tableaux respectively, and we define $\varphi_{\S^\sigma\T^\tau}$ to be the extension of the element of $\Hom_{A}(M(\lambda,l), M(\mu,m))$ given by
\begin{align*}
 \varphi_{\S^\sigma\T^\tau}(\epsilon_l \otimes \epsilon_l \otimes x_{\lambda}) = 
[\sigma] \otimes [\tau] \otimes m_{\S \T}.
\end{align*}
Moreover,
\begin{itemize}
\item The map $\ast: S(A) \to S(A)$, $:\varphi_{\S^\sigma\T^\tau} \to \varphi_{\T^\tau \S^\sigma}$ defines an anti-isomorphism of $S(A)$.
\item Suppose that $\omega \in \Lambda_A$ and that $\S$ is a semistandard $\omega$-tableau.  Then for all $\varphi 
 \in S(A)$ there exist $k_{\V^\nu} \in K$ such that for all $\T \in T_0^\ast(\omega)$
\begin{align*}
\varphi_{\S^\sigma\T^\tau}\circ \varphi 
=\sum_{\V \in T_0 (\lambda)}k_\V \varphi_{\S^\sigma\V^\nu} \quad \text{\rm{mod}-}\overline{S^\omega(A)},
\end{align*}
\end{itemize}
where the ideals $S^\omega(A), \overline{S^\omega(A)} \lhd S(A)$ are the $K$-modules spanned by $\varphi_{\S\T}$ such that $\S, \T \in T^\ast_0(\alpha)$ for some $\alpha \unrhd \omega$ or  $\alpha \rhd \omega$ respectively. 
 Consequently, this is a cellular basis of $S(A)$. 
\end{murthm}

\begin{proof}
We chose not to use the cellular basis in the statement of Proposition \ref{piranese}, but one can of course use this to determine the through-lines (as we did in Propositions and \ref{porg} \ref{partitionhomthm}) and hence $\Phi$ is a basis of the endomorphism algebra $S(A)$.
 
We want to study the composition of homomorphisms $\varphi_{\S^\sigma\T^\tau}\in \Hom_{B_r}(M(\mu,m))$ and $\varphi_{\U^\upsilon \V^\omega}\in \Hom_{B_r}(M(\mu,m), M(\nu,n))$, where $\S^\sigma \in T^\ast_0(\omega, \lambda)$, $\T^\tau \in T^\ast_0(\omega, \mu)$, $\U^\upsilon\in T^\ast_0(\omega', \mu)$, and $\V^\omega \in T^\ast_0(\omega', \nu)$.  We assume that $|\omega| \leq |\omega'|$ because the other case follows easily.
We have that $\varphi_{\S^\sigma\T^\tau}(\epsilon_l \otimes \epsilon_l \otimes x_{\lambda})=([\sigma] \otimes \tau \otimes h_{\S^\sigma})(\epsilon_m \otimes \epsilon_m\otimes x_{\mu})$ and $\varphi_{\U^\upsilon \V^\omega}(\epsilon_m \otimes \epsilon_m\otimes x_{\mu})=([\upsilon] \otimes \omega \otimes h_{\U^\upsilon})(\epsilon_n \otimes \epsilon_n\otimes x_{\nu})$ for some $h_{\S^\sigma}, h_{\U^\upsilon} \in \Sigma_r$.  

To prove that $\ast$ is an anti-isomorphism, we first check that
\begin{align*}\tag{$\star$}
 (\varphi_{\S^\sigma\T^\tau}(\epsilon_l \otimes \epsilon_l \otimes x_{\lambda}))^\ast &=( [\sigma] \otimes [\tau] \otimes m_{\S \T})^\ast 
\\ &= [\tau] \otimes [\sigma] \otimes m_{ \T \S}\\ &= \varphi_{\T^\tau \S^\sigma}(\epsilon_m \otimes \epsilon_m\otimes x_{\mu}).
\end{align*}
Therefore, we have that
\begin{align*}
(\varphi_{\S^\sigma\T^\tau} \circ \varphi_{U^{\upsilon} \V^\nu})^\ast(\epsilon_l \otimes \epsilon_l \otimes x_{\lambda})
 &=( ([\upsilon] \otimes \omega \otimes h_{\U^\upsilon})([\sigma] \otimes \tau \otimes h_{\S^\sigma})(\epsilon_l \otimes \epsilon_l \otimes x_{\lambda}) )^\ast \\
&=(\epsilon_l \otimes \epsilon_l \otimes x_{\lambda})( \tau \otimes[\sigma] \otimes h^\ast_{\S^\sigma})
(\omega \otimes [\upsilon]  \otimes h^\ast_{\U^\upsilon}),
\intertext{by $\star$ this is}
&=\varphi_{\T^\tau \S^\sigma}(\epsilon_l \otimes \epsilon_l \otimes x_{\lambda}) 		(\omega \otimes [\upsilon]  \otimes h^\ast_{\U^\upsilon}) \\
&=\varphi_{\T^\tau \S^\sigma}(	 ([\upsilon] \otimes \omega \otimes h_{\U^\upsilon})(\epsilon_l \otimes \epsilon_l \otimes x_{\mu})) ^\ast;
\intertext{again by $\star$ this is}
&=\varphi_{\T^\tau \S^\sigma}( \varphi_{ \V^\nu \U^{\upsilon}}	 (\epsilon_l \otimes \epsilon_l \otimes x_{\lambda})) 	\\
&=( \varphi_{ \V^\nu \U^{\upsilon}} \circ \varphi_{\T^\tau \S^\sigma})(\epsilon_l \otimes \epsilon_l \otimes x_{\lambda}),
\end{align*}
and so $\ast$ defines an anti-isomorphism.
Finally, we check the condition on multiplication.  We have that 
\begin{align*}
(\varphi_{\S^\sigma\T^\tau} \circ \varphi_{\U^{\upsilon} \V^\nu})(\epsilon_l \otimes \epsilon_l \otimes x_{\lambda})
&= ([\upsilon] \otimes \omega \otimes h_{\U^\upsilon})([\sigma] \otimes \tau \otimes h_{\S^\sigma})(\epsilon_l \otimes \epsilon_l \otimes x_{\lambda}) 
\\
&= ([\sigma] \otimes \tau \otimes h_{\S^\sigma})([\upsilon] \otimes [\omega] \otimes m_{\U^{\upsilon} \V^\nu}), 
\end{align*}
multiplication by elements of the Brauer algebra either increases the number of arcs, or acts by permutation of the through-lines and top rows.  Therefore, the composition is a sum of homomorphisms indexed by semistandard $\alpha$-tableaux, where $|\alpha| \leq |\omega|$.   By the definition of the dominance order we have that if $|\alpha| < |\omega|$, then we are done.  

If $|\alpha| = |\omega|$, then the multiplication by $ [\sigma] \otimes \tau \otimes h_{\S^\sigma}$  sends the top row of arcs of $([\upsilon] \otimes [\omega] \otimes m_{\U^{\upsilon} \V^\nu})$ to $[\sigma]$ and acts as an element $h\in \Sigma_r$ permuting the through-lines.
 This homomorphism is indexed by tableaux of type $(\lambda,\sigma)$ and $(\nu,\omega')$ such that $\lambda\!\!\downarrow_\sigma$, $\nu\!\! \downarrow_{\omega'} \vdash|\omega|$.  Finally, by Theorem \ref{homotheoremgreen} we have that $h(m_{\U^{\upsilon} \V^\nu})$ is expressible in terms of homomorphisms indexed by semistandard tableaux of type $\alpha \unrhd \omega$.  
\end{proof}

Recall that there exists a unique $\lambda$-tableau, $\T^\lambda$, of type $(\lambda,0)$.  Define the map $\varphi_\lambda= \varphi_{\T^\lambda\T^\lambda}+\overline{S^\lambda(A)}$.  This restricts to be the identity map on $M(\lambda,l)$.

\begin{defn}
Suppose that $(\lambda,l) \in \Lambda_A$, the Weyl module $W(\lambda,l)$ is the submodule of $S^\lambda(A)/\overline{S^\lambda(A)}$ generated by $\varphi_{ \T^\tau \T^\lambda } +\overline{S^\lambda(A)}$.
\end{defn}

If ${\S^\sigma}$ is a semistandard $(\lambda,i)$-tableau  let $\varphi_{\S^\sigma} = \varphi_{{\S^\sigma}\T
_\lambda} (\varphi_\lambda + \overline{S^\lambda(A)}) = \varphi_{{\S^\sigma}\T
_\lambda} + \overline{S^\lambda(A)}$. 

\begin{cor}
The Weyl module $W(\lambda,l)$ is a free $K$-module with basis $$\{\varphi_{\S^\sigma} : {\S^\sigma} \in T_0(\lambda,\mu)\text{ for some }(\mu,m) \in \Lambda_A\}.$$
\end{cor}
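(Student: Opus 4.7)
The plan is to derive this corollary directly from the cellular multiplication rule of the Semistandard Basis Theorem in three short moves: linear independence comes from restricting the cellular basis of $S^\lambda(A)/\overline{S^\lambda(A)}$, the containment $\mathrm{span}_K\{\varphi_{\S^\sigma}\}\subseteq W(\lambda,l)$ comes straight from the defining relation $\varphi_{\S^\sigma}=\varphi_{\S^\sigma\T^\lambda}\circ\varphi_\lambda$, and the reverse containment comes from applying the anti-involution $\ast$ to the cellular multiplication rule.

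For the first move, the Semistandard Basis Theorem supplies a $K$-basis of $S^\lambda(A)/\overline{S^\lambda(A)}$, namely the cellular basis $\{\varphi_{\S^\sigma\T^\tau}+\overline{S^\lambda(A)}\}$ indexed by pairs of semistandard $\lambda$-tableaux of all admissible types. Restricting to pairs with $\T^\tau=\T^\lambda$ gives a linearly independent subset of $S^\lambda(A)/\overline{S^\lambda(A)}$, and by the definition given just before the corollary these are precisely the elements $\varphi_{\S^\sigma}$. Hence they remain linearly independent inside the submodule $W(\lambda,l)$.

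For spanning, the forward direction is immediate: the definition $\varphi_{\S^\sigma}=\varphi_{\S^\sigma\T^\lambda}(\varphi_\lambda+\overline{S^\lambda(A)})$ exhibits $\varphi_{\S^\sigma}$ as the image of the generator $\varphi_\lambda$ under left multiplication by an element of $S(A)$, so $\varphi_{\S^\sigma}\in W(\lambda,l)$. For the reverse inclusion, apply $\ast$ to the cellular multiplication rule: for any $\varphi\in S(A)$,
\[
\varphi\circ\varphi_{\T^\lambda\T^\lambda}=\bigl(\varphi_{\T^\lambda\T^\lambda}\circ\varphi^{\ast}\bigr)^{\ast}\equiv\sum_{\V^{\nu}}k_{\V^{\nu}}\,\varphi_{\V^{\nu}\T^\lambda}\pmod{\overline{S^\lambda(A)}},
\]
so $\varphi\circ\varphi_\lambda$ lies in $\mathrm{span}_K\{\varphi_{\V^\nu}\}$. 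Since $W(\lambda,l)=S(A)\,\varphi_\lambda$, this yields $W(\lambda,l)\subseteq\mathrm{span}_K\{\varphi_{\S^\sigma}\}$, and combining with the forward inclusion gives equality.

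The main obstacle is purely book-keeping: one must check that as $\varphi$ ranges over $S(A)$ and $\V^\nu$ over its cellular indices, the resulting $\V^\nu$ are exactly the semistandard $\lambda$-tableaux of type $(\mu,m)$ for $(\mu,m)\in\Lambda_A$ claimed in the statement; and one must carefully distinguish the two ideals $S^\omega(A)$ and $\overline{S^\omega(A)}$ at each step, since the multiplication rule is stated modulo the strictly larger one while the Weyl module lives in the quotient by the other. Once the notational match is made, the argument is essentially formal.
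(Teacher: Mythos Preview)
Your proof is correct and is exactly the standard Graham--Lehrer argument that a cell module for a cellular algebra is free on the claimed basis. The paper does not supply a proof for this corollary at all: it is stated immediately after the Semistandard Basis Theorem and the definition of $W(\lambda,l)$, and is left to the reader as a formal consequence of cellularity. So there is nothing to compare; you have simply written out the omitted details.

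One small correction to your closing paragraph: you say the multiplication rule is stated modulo ``the strictly larger'' ideal while the Weyl module lives in the quotient by ``the other''. In fact both are $\overline{S^\lambda(A)}$, the ideal spanned by basis elements with label $\alpha \rhd \lambda$ strictly; the paper's multiplication rule is already stated mod $\overline{S^\omega(A)}$, not mod $S^\omega(A)$. So there is no mismatch to reconcile, and your ``main obstacle'' is not actually present. The indexing check you mention (that the $\V^\nu$ appearing are exactly the semistandard $\lambda$-tableaux of admissible types) is likewise automatic from the cellular axiom, since the right-hand side of the multiplication rule is a sum over $\V^\nu \in T_0^\ast(\lambda)$ by construction.
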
 

Define a bilinear form $\langle \ \! \ \! , \ \rangle$ on $W(\lambda,l)$ by requiring that
$\varphi_{\T
_\lambda {\S^\sigma} } \varphi_{\T^\tau  \T
_\lambda} =\langle \varphi_{{\S^\sigma} },\varphi_{\T^\tau}\rangle \varphi_\lambda$ ${\rm mod}$-$\overline{S^\lambda(A)}$
for all semistandard $\lambda$-tableaux  ${\S}^\sigma$ and $\T^\tau$.

\begin{cor}
Let $A$ be the Brauer, walled Brauer, or partition algebra.   The algebra $S(A)$ is a quasi-hereditary cover of $A$. 

 The Schur functor induces a Morita equivalence in characteristic zero.
 In characteristic $p>3$ this cover is 1-faithful and therefore canonically associated to the diagram algebra.
\end{cor}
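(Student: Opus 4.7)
The plan is to verify the hypotheses of Theorem~\ref{HHKPmain} for each of the three diagram algebras under consideration and invoke it directly. The cellular basis produced by the semistandard basis theorem above already realises $S(A)$ as a cellular algebra whose cell modules are the Weyl modules $W(\lambda,l)$, so the content of the corollary lies entirely in the double-centraliser and cohomological statements supplied by Theorem~\ref{HHKPmain}.

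First I would observe that for each of the three diagram algebras the input algebras are group algebras of (products of) symmetric groups: $B_l = K\Sigma_{r-2l}$ for the Brauer algebra, $B_l = K\Sigma_{r'-l}\otimes K\Sigma_{r-l}$ for the walled Brauer algebra, and $B_l = K\Sigma_{r-l}$ for the partition algebra. In each case the classical Schur algebra $S(B_l)$ is a quasi-hereditary cover of $B_l$ which, by Hemmer--Nakano, is $1$-faithful in characteristic $p > 3$ and a Morita equivalence in characteristic zero. The required unitary embedding $B_l \hookrightarrow e_lAe_l$ is supplied by the references cited in Section~\ref{stratified}.

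Next I would verify the hypothesis of Theorem~\ref{HHKPmain} that $\Hom_A(M,-)$ is exact on $\mathcal{F}(\Delta)$. The ``Specht filtration'' corollaries of the previous three sections show that every summand $M(\lambda,l)$ of $M$ admits a filtration by cell modules $\Delta(\mu,l+i)$, placing $M$ itself in $\mathcal{F}(\Delta)$; exactness then amounts to the vanishing of $\Ext^1_A(M, \Delta(\nu,n))$ for all $(\nu,n) \in \Lambda_A$. Combining the adjunction of Section~\ref{ind} with the filtration of $\Ind^A_{B_l}$ described in Section~\ref{wobbles}, whose subquotients are built from the exact globalisation functors, one obtains an iterated long-exact-sequence reduction of this vanishing to $\Ext^1$-vanishing between Young modules and Specht modules over the input symmetric groups, which is precisely the Hemmer--Nakano theorem.

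Theorem~\ref{HHKPmain} then yields both the quasi-hereditary cover assertion and the $1$-faithfulness in characteristic $p > 3$. In characteristic zero, each $B_l$ is semisimple, so $S(B_l)$ is Morita equivalent to $B_l$, and this equivalence propagates through the construction of $M$ via induction to give a Morita equivalence between $S(A)$ and $A$ whenever $A$ itself is quasi-hereditary, which is the generic situation in characteristic zero. The main obstacle throughout is the exactness reduction of the second step: the filtration of $\Ind^A_{B_l}$ has several layers, and tracking $\Ext^1$-information as one climbs from the $B_l$-level up to $A$ demands care with the iterated long-exact sequences, whose success rests ultimately on the exactness of the globalisation functors and on Hemmer--Nakano vanishing at the bottom. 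The constraint $p > 3$ is inherited from the latter and cannot be improved by this strategy.
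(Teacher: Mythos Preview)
Your route through Theorem~\ref{HHKPmain} handles the $1$-faithfulness claim for $p>3$, but it does \emph{not} establish the unconditional assertion that $S(A)$ is a quasi-hereditary cover of $A$. Theorem~\ref{HHKPmain} has as a hypothesis that each input algebra $B_l$ admits a $1$-faithful cover; for group algebras of symmetric groups this is exactly the Hemmer--Nakano theorem and fails for $p\in\{2,3\}$. So your argument leaves the first sentence of the corollary unproved in small characteristic. The paper instead argues directly from the cellular structure: for each $(\lambda,l)$ the element $\varphi_{\T^\lambda\T^\lambda}$ is the identity on $M(\lambda,l)$, hence $\varphi_{\T^\lambda\T^\lambda}\varphi_{\T^\lambda\T^\lambda}=\varphi_\lambda$ and the cellular bilinear form on $W(\lambda,l)$ takes the value $1$ on this element. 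Thus every Weyl module has a simple head, the cell chain stratifies $S(A)$, and $S(A)$ is quasi-hereditary in \emph{every} characteristic. The double centraliser is then obtained by noting that $M(1^r,0)\cong A$ occurs as a summand of $M$, so idempotent truncation recovers $A$ as $\End_{S(A)}(M)$. None of this uses Hemmer--Nakano.

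For the $1$-faithfulness step your reduction is also looser than what the paper does. You propose to pass from $\Ext^1_A(M(\lambda,l),\Delta(\nu,n))$ to symmetric-group $\Ext^1$'s by combining the $(\Ind,\Res)$ adjunction with the filtration of $\Ind$ by globalisation layers; but there is no Shapiro-type isomorphism available for $\Ext^1$ along the globalisation functors in this setting, and the $(\Ind,\Res)$ adjunction only controls $\Hom$, not $\Ext$, because $\Ind$ is merely right exact. The paper instead follows the concrete inductive scheme of \cite[Proposition~24]{Row} to reduce to the single case $\Ext^1_A(K,\Delta(\mu))=0$, and then computes this by decomposing $\Res^A_{\Sigma_r}\Delta(\mu)$ as a sum of modules induced from wreath products $\Sigma_s\wr\Sigma_t$; a split-quotient argument (as in \cite{reduction}) identifies the relevant $\Ext^1$ with $\Ext^1_{K\Sigma_t}(K,\mathcal{S}(\mu))$, which vanishes by Hemmer--Nakano. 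Your outline reaches the same endpoint but the intermediate ``iterated long-exact-sequence reduction'' is not justified as written.
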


\begin{proof}
From the definition of the bilinear form on $W(\lambda,l)$, we have 
\begin{align*}
\varphi_{\T^\lambda\T^\lambda}\varphi_{\T^\lambda\T^\lambda} \equiv \langle \varphi_{\T^\lambda\T^\lambda} , \varphi_{\T^\lambda\T^\lambda}\rangle \varphi_\lambda \text{ mod } \overline{S^\lambda(A)}.
\end{align*}
However, $\varphi_{\T^\lambda\T^\lambda}\varphi_{\T^\lambda\T^\lambda}=\varphi_{\lambda}$ is the identity on $M(\lambda,l)$ and so $\langle \varphi_{\T^\lambda\T^\lambda} , \varphi_{\T^\lambda\T^\lambda}\rangle=1$.  Consequently each Weyl module $W(\lambda,l)$ is equipped with a non-vanishing bilinear form.  Therefore $D(\lambda,l) = \Head(W(\lambda,l))$ constitute a full set of non-isomorphic simple modules.  
  Therefore the cell-chain stratifies the algebra, and it is quasi-hereditary.

One half of the double centraliser property follows from the definition.  That $A=\End_{S(A)}(M)$ follows by consideration of the idempotent projection onto the permutation module $M(1^r)=A$.

We now show that the cover is 1-faithful for $p\neq2,3$.  By Theorem \ref{HHKPmain}, it is enough to check that $\Hom_A(M,-)$ is exact on modules with a cell-filtration when $p\neq 2,3$. 
The case of Brauer algebra is treated in \cite{Row}, we shall prove the result for the partition algebra and leave the case of the walled Brauer algebra as an exercise.
  
%

The partition algebra is a tower of recollement and so restriction of a cell module to a `Young subalgebra' has a cell-filtration, this generalises \cite[Proposition 7]{Row}. 

One can now repeat the arguments of \cite[Proposition 24]{Row} to give an inductive argument on the dominance ordering, this reduces the computation to showing that $\Ext^1_{P_K(r,\delta)}(K, \Delta(\mu))   =0. 
$ 

By Section \ref{Brauerinflation}, we have that $\Delta(\mu) = V_l \otimes \mathcal{S}(\mu)$ decomposes as a direct sum of $\Sigma_r$-modules, each of which is induced from a module for a product of various subgroups of the form $\Sigma_r\wr\Sigma_t$.  Each component module in the product is formed by inflation of a Specht module from $\Sigma_t$.  

We denote the $\Sigma_s \wr \Sigma_t$-module obtained by inflation from the $\Sigma_t$-module $M$, by ${\rm{infl}}_{\Sigma_t}^{\Sigma_s \wr {\Sigma_t}}M$.     
We only need show that $\Ext^1_{K\Sigma_s\wr\Sigma_{t}} (K,  {\rm infl}_{\Sigma_t}^{\Sigma_s \wr {\Sigma_t}}\mathcal{S}(\mu))=0$.    
The sequence
$$0 \to  (\Sigma_s)^{t} \to \Sigma_s \wr \Sigma_{t} \xrightarrow{\pi}\Sigma_{t} \to 0,$$
leads to a split quotient situation (as in \cite{reduction}) relating $\Sigma_s \wr \Sigma_{t}$ and $\Sigma_{t}$ and leads to the isomorphism $\Ext^1_{K\Sigma_s\wr\Sigma_{t}} (K,  {\rm infl}_{\Sigma_t}^{\Sigma_s \wr {\Sigma_t}}\mathcal{S}(\mu))=\Ext^1_{K  \Sigma_{t}} (K, \mathcal{S}(\mu))$, which is zero by the result of Hemmer and Nakano.  Here we have used that the trivial $\Sigma_s \wr \Sigma_{t}$-module is obtained by inflation of the trivial $\Sigma_{t}$-module.
 \end{proof}

\begin{rmk}  In the case that $A$ is the Brauer algebra, one should compare the results of this section with those of \cite[Sections 7-- 11]{hko}.   They prove directly that $S(A)$ is quasi-hereditary; whilst we  follow the Dipper--James--Mathas philosophy and first prove cellularity.
\end{rmk}

\begin{Acknowledgements*}
We thank Stuart Martin, Stephen Doty, and Rowena Paget for many helpful discussions on diagram algebras and their Schur--Weyl dualities.  We also thank David Stewart and the referee for their helpful comments on previous drafts.  
\end{Acknowledgements*}


\begin{thebibliography}\frenchspacing\raggedright\small
 

\bibitem[CPS96]{CPS} E.\ Cline; B.\ Parshall; L.\ Scott, Stratifying endomorphism algebras, Memoir A.~M.~S. \textbf{124} (1996).

\bibitem[CDM08]{coxwall} A. Cox; M. De Visscher; S. R, Doty; P. Martin, On the blocks of the walled Brauer algebra, J. Algebra \textbf{320} (2008), 169--212.
 
\bibitem[DJ86]{DJ} R. Dipper; G. D. James, Representations of Hecke algebras of general linear groups
Proc. London Math. Soc.   \textbf{52} (1986), no. 3, 20--52.

\bibitem[DJM98]{DJM}R.\ Dipper; G.\ James; A.\ Mathas, Cyclotomic q-Schur algebras, Math. Z. \textbf{229} (1998),
no. 3, 385--416. 

 \bibitem[DK07]{reduction}  L. Diracca; S. K\"onig, Cohomological reduction by split pairs, J. Pure Appl. Algebra  \textbf{212} (2008), no. 3, 471--485.

 
\bibitem[DT]{DT} S. Donkin; R. Tange, The Brauer algebra and the symplectic Schur algebra, Math. Z. \textbf{265} (2010), 187--219.

 

\bibitem[GL96]{GL} J. J. Graham; G. I. Lehrer, Cellular algebras, Invent. Math. \textbf{123} (1996), 1--34.

\bibitem[G80]{Green} J. A. Green,  {Polynomial Representations of  $\GL_n$}, vol. 830. Springer--Verlag, Berlin (1980).


\bibitem[HHKP10]{HHKP} R. Hartmann; A. Henke; S. K\"onig; R. Paget, Cohomological stratification of diagram algebras, Math. Ann. \textbf{347} (2010), 765--804.

 \bibitem[HP06]{Row} R. Hartmann; R. Paget, Young modules and filtration multiplicities for Brauer algebras,
Math. Z. \textbf{254} (2006), no. 2, 333--357.
 

\bibitem[HN04]{hemnak} D. J. Hemmer; D. K. Nakano, Specht filtration for Hecke algebras of type $A$, J. London Math. Soc. \textbf{69} (2004), no. 3, 623--638.  

\bibitem[HK11]{hko} A.\ Henke; S. K\"onig, Schur algebras of Brauer algebras I, DOI: 10.1007/s00209-011-0956-x


\bibitem[KX99]{KX} S. K\"{o}nig; C. Xi, Cellular algebras: inflations and Morita equivalences, {J.\ London Math.\ Soc.\ } \textbf{60} (1999), 700--722.

\bibitem[M92]{16} G. E. Murphy. On the representation theory of the symmetric groups and associated Hecke algebras, J. Algebra  \textbf{152} (1992), 492--513.
 
\bibitem[R95]{RAM} A. Ram, Characters of Brauer's centralizer algebras, Pacific J. Math. \textbf{169} (1995), no. 1, 173--200.

\bibitem[R08]{rouq} R. Rouquier, $q$-Schur algebras and complex reflection groups, Mosc. Math. J. \textbf{8} (2008),	 no. 1,  119--158.
 

\end{thebibliography}
\end{document}